\newcommand{\rt}{\rightarrow}
\newcommand{\lrt}{\longrightarrow}
\newcommand{\st}{\stackrel }
\newcommand{\la}{\lambda}
\newcommand{\La}{\Lambda}
\newcommand{\Z}{\mathbb{Z} }
\newcommand{\CC}{\mathcal{C} }
\newcommand{\CS}{\mathcal{S} }
\newcommand{\CH}{\mathcal{H}}
\newcommand{\Y}{\mathbf{Y}}
\newcommand{\W}{\mathbf{W}}
\newcommand{\mmod}{{\rm{{mod\mbox{-}}}}}
\newcommand{\im}{{\rm{Im}}}
\newcommand{\Ker}{{\rm{Ker}}}
\newcommand{\Hom}{{\rm{Hom}}}
\theoremstyle{plain}
\newtheorem{theorem}{Theorem}[section]
\newtheorem{corollary}[theorem]{Corollary}
\newtheorem{lemma}[theorem]{Lemma}
\newtheorem{proposition}[theorem]{Proposition}
\theoremstyle{definition}
\newtheorem{example}[theorem]{Example}
\newtheorem{construction}[theorem]{Construction}
\newtheorem{remark}[theorem]{Remark}
\theoremstyle{plain}
\theoremstyle{definition}
\numberwithin{equation}{section}
\begin{document}

\title[Determination of some almost split sequences in morphism categories]{Determination of some almost split sequences in morphism categories}

\author[Rasool Hafezi and Hossein Eshraghi  ]{Rasool Hafezi and Hossein Eshraghi }

\address{School of Mathematics and Statistics,
Nanjing University of Information Science \& Technology, Nanjing, Jiangsu 210044, P.\,R. China}
\email{hafezi@nuist.edu.cn \\ hafezi@ipm.ir}

\address{Department of Pure Mathematics, Faculty of Mathematical Sciences, University of Kashan, PO Box 87317-51167, Kashan, Iran}
\email{eshraghi@kashanu.ac.ir }

\subjclass[2010]{16G10, 16G60, 16G70}

\keywords{morphism category, almost split sequences, representation type}

\begin{abstract}
Almost split sequences lie in the heart of Auslander-Reiten theory. This paper deals with the structure of almost split sequences with certain ending terms in the morphism category of an Artin algebra $\Lambda$. Firstly we try to interpret the Auslander-Reiten translates of particular objects in the morphism category in terms of the Auslander-Reiten translations within the category of $\Lambda$-modules, and then use them to calculate almost split sequences. In classical representation theory of algebras, it is quite important to recognize the midterms of almost split sequences. As such, another part of the paper is devoted to discuss the midterm of certain almost split sequences in the morphism category of $\Lambda$.  As an application, we restrict in the last part of the paper to self-injective algebras and present a structural theorem that illuminates a link between representation-finite morphism categories and Dynkin diagrams.
\end{abstract}

\maketitle
\section{Introduction}
For a ring $\Lambda$, its morphism category (resp. monomorphism category) $H(\Lambda)$ (resp. $S(\Lambda)$) has as its objects all $\Lambda$-maps (resp. all $\Lambda$-monomorphisms)
$(A\st{f}\rt B)$ in the category $\mmod\Lambda$ of all finitely generated left $\Lambda$-modules, and a morphism $h$ from an object $(A\st{f}\rt B)$ to $(C\st{g}\rt D)$ is given by a pair
$h=\left(\begin{smallmatrix}
h_1\\
h_2
\end{smallmatrix}
\right)$ of $\Lambda$-maps where $h_1: A\rt C$ and $h_2: B\rt D$ satisfy $g h_1=h_2 f$.
%the square
%$$\xymatrix{
%A\ar[r]^{h_1}\ar[d]_{f} & C\ar[d]^{g}\\
%B\ar[r]^{h_2} & D
%}$$
%commutes.
It is known that $H(\Lambda)$ is naturally equivalent to the category of finitely generated right modules over the triangular matrix ring $T_2(\Lambda)=\left(\begin{smallmatrix}
 \Lambda & \Lambda\\
 0 &\Lambda
 \end{smallmatrix}\right)$.

\vspace{.1 cm}

During last decades, there has been a rising interest in the monomorphism category of a ring. This interest comes from several origins among which the work of Birkhoff in 1934 \cite{Bi} proposing to consider the embeddings of a subgroup of a given Abelian group is worth mentioning. Thereafter, links to other areas of algebra such as determination of invariant subspaces of linear operators \cite{RS2} and Gorenstein homological algebra \cite{Z, LZ, ZX} were also detected.
In contrast to the monomorphism category (also called the submodule category), the morphism category itself has not been the target of many considerations. Probably this is due to the fact that, compared to $S(\Lambda)$, it has a fairly complicated nature particularly from representation-theoretic points of view.

\vspace{.1 cm}

The morphism category of an Artin algebra $\Lambda$ deserves special attention also from the functor category perspective. Namely, let $\mmod(\mmod\Lambda)$ be the category of all finitely presented (or coherent) contravariant additive functors on $\mmod\Lambda$ with values in $\mathbf{Ab}$, the category of Abelian groups. A classical result due to Auslander \cite {Au} suggests that one may recover the category $\mmod\Lambda$ as a quotient of $\mmod(\mmod\Lambda)$ via the equivalence $$\mmod\Lambda\simeq\frac{\mmod(\mmod\Lambda)}{\{ F\mid F(\Lambda)=0\}};$$ this expression is now known as Auslander Formula. The privilege of such an insight is that  $\mmod(\mmod\Lambda)$ enjoys nice homological properties that may facilitate some arguments involving $\mmod\Lambda$ itself; in particular it is known that its global dimension does not exceed $2$. In other words, a natural line of thought  posed by this formula is to do homological algebra within $\mmod(\mmod\Lambda)$ and then try to translate the results back for $\mmod\Lambda$. It should be highlighted that this exchange between  $\mmod(\mmod\Lambda)$ and $\mmod\Lambda$ led to a functorial approach to the fundamental concept of almost split sequences \cite{ARVI}. On the other hand, in \cite{Aus}, a functor $\theta:H(\Lambda)\lrt \mmod(\mmod\Lambda)$ has been constructed. The studies conducted in \cite{HM} reveal that this mapping respects certain types of almost split sequences.

\vspace{.1 cm}

Gluing together observations from previous paragraph motivates one to scrutinize further the morphism category $H(\Lambda)$ from representation-theoretic perspectives. Following these lines of thought, the Auslander-Reiten theory of the subcategory $\mathcal{P}(\Lambda)$ of $H(\Lambda)$ consisting of $\Lambda$-maps between projective modules has been investigated in \cite{Ba}. The idea of deriving almost split sequences and Auslander-Reiten translates in $H(\Lambda)$ from those in $\mmod\Lambda$ has also been followed e.g. in \cite{RS1}, \cite{XZZ}, and \cite{E}. Moreover, it might be interesting for the reader to draw attention to the interaction between the (mono)morphism category of $\Lambda$ and its (stable) Auslander algebra \cite{ARMatrix}, \cite{H}.

\vspace{.1 cm}

As declared above, this paper is devoted to study the morphism category $H(\Lambda)$ of an Artin algebra $\Lambda$, and this investigation concentrates mainly on the Auslander-Reiten theory of $H(\Lambda)$. Let us give an outline of what will be done throughout. In Section $2$, we compute the Auslander-Reiten translates of certain objects in $H(\Lambda)$. Based on what is needed in subsequent sections, the objects considered in this section include $(C\rt 0)$ and $(C\st{e}\rt I)$ where $e$ is the injective envelope of an indecomposable $\Lambda$-module $C$, and also non-projective indecomposable objects lying in $\mathcal{P}(\Lambda)$. It turns out that such computations might be done merely within the category $\mmod\Lambda$. In Section $3$ we recognize some almost split sequences in $H(\Lambda)$. In particular, we determine almost split sequences ending in an indecomposable object $(A\st{f}\rt B)$ where $f$ is a minimal right (resp. left) almost split map. Further, typical objects raised by projective covers $(P\st{p}\rt C)$ and injective envelopes $(C\st{e}\rt I)$ will be considered. Section $4$ looks at the middle terms of almost split sequences; part of this section considers those almost split sequences raised in Section $3$.

\vspace{.1 cm}

In the last section, restricting to self-injective algebras $\Lambda$, we apply our results to show that certain popular objects in $H(\Lambda)$ are $\tau_{\mathcal{H}}$-periodic; here $\tau_{\mathcal{H}}$ denotes the Auslander-Reiten translation in $H(\Lambda)$. Moreover, a link between morphism categories of finite representation type and Dynkin diagrams is established. More precisely, we will show, in certain circumstances, that the stable Auslander-Reiten quiver of $H(\Lambda)$, that is the quiver obtained from the Auslander-Reiten quiver of $H(\Lambda)$ be removing projective or injective vertices, is isomorphic to a quotient $\Z\Delta/G$ of the valued translation quiver $\Z\Delta$ of a Dynkin quiver $\Delta$. This result is obtained on the basis of a theorem by Liu \cite{L}. Regarding the fact that $H(\Lambda)$ is equivalent to the category of finitely generated right modules over $T_2(\Lambda)$, and that $T_2(\Lambda)$ is $1$-Gorenstein in this setting, we believe that such statements may serve as a motivation for extending the well-known results by Gabriel and Riedtmann \cite{R1, R2} concerning representation-finite hereditary and representation-finite self-injective algebras; see Remark \ref{RGR} for more details.

\vspace{.1 cm}

\noindent{\bf Notation and Conventions}.
Module categories and corresponding morphism categories are the only categories considered in this paper. However, in order to unify the expressions, we prefer to recall some basic materials in the category-theoretic language. For an object $X$ of a category $\CC$, we denote by $\rm{add}\mbox{-} X$ the category consisting of all direct summands of finite direct sums of copies of $X$. We say that a category is of finite representation type if there exists an object $X \in \CC$ such that $\CC=\rm{add}\mbox{-}X$.  An Artin algebra $\La$ is said to be of finite representation type (or representation-finite), if $\mmod \La$ is so. For a Krull-Schmidt category $\CC$, we denote by $\rm{ind}\mbox{-}\CC$ the subcategory of $\CC$ consisting of all indecomposable objects. To simplify the notation, we shall write $\rm{ind}\mbox{-}\Lambda$ to indicate finitely generated indecomposable $\Lambda$-modules.

\vspace{.1 cm}

Throughout the paper, with no potential ambiguity, we shall write $\mathcal{H}$ and $\mathcal{S}$ respectively instead of $H(\Lambda)$ and $S(\Lambda)$. If we regard the morphism $f:X\rt Y$ as an object in $\mathcal{H}$, depending on the context, we will use either of the symbols
$$\left(\begin{smallmatrix}
 X\\Y
 \end{smallmatrix}\right)_f, \ (X\st{f}\rt Y) \ \text{or} \ XY_f $$
where the last one will always be used in diagrams considering typographical issues.
%A morphism in $\mathcal{H}$ from $\left(\begin{smallmatrix} A \\ B\end{smallmatrix}\right)_f$ to $\left(\begin{smallmatrix} C \\ D\end{smallmatrix}\right)_g$ is denoted by $\left(\begin{smallmatrix} \alpha \\ \beta\end{smallmatrix}\right) $, where $\alpha:A\rt C$ and $\beta:B\rt D$ are $\Lambda$-homomorphisms satisfying the commutativity condition $\beta f=g \alpha$. If one of the modules are zero, we we usually refrain from writing the zero map in the above notation.

\vspace{.1 cm}

We denote by $D_{\La}:\mmod \La\rt \mmod \La^{\rm{op}}$ the standard duality functor. Also the contravariant functor $\Hom_{\La}(-, \La):\mmod \La\rt \mmod \La^{\text{op}}$ is denoted $(-)^*_{\La}$. For the terminology and background on almost split morphisms we refer the reader to \cite{AuslanreitenSmalo, AS, L}. The Auslander-Reiten translation in $\mmod \La$ is denoted by  $\tau_{\La}$.  Moreover, $\nu_{\La}, \rm{Tr}_{\La}$ and $\Omega_{\La}^i$ stand respectively for the Nakayama functor,  the transpose and the $i$-th syzygy functor defined as usual by taking the kernels of the projective covers consecutively. In a dual manner, $\Omega^{-i}_{\La}$ is reserved to denote the $i$-th cosyzygy functor. The subscript $\Lambda$ will very often be omitted as this will cause no confusion.

\vspace{.1 cm}

Let us stress that indecomposable projective (resp. injective) objects in $\mathcal{H}$ are of the either forms $(0\rt P)$ or $(P\st{1}\rt P)$ (resp. $(I\rt 0)$ or $(I\st{1}\rt I)$) where $P$ (resp. $I$) is an indecomposable projective (resp. injective) $\Lambda$-module \cite{RS1}; this is freely used throughout the paper. Finally, the canonical inclusion and the canonical quotient maps are denoted respectively by $i$ and $p$ and the identity morphism of an object $C$ in a category $\CC$ is simply denoted by $1$.

\section{The translate of certain objects in $\mathcal{H}$}
The objective in this preliminary section is to compute the Auslander-Reiten translate of some certain objects in $\mathcal{H}$. Let us inaugurate it by adapting some necessary definitions from the Auslander-Reiten Theory of $\Lambda$. Throughout, the contravariant  functor
$(-)^*_\mathcal{H}:\mathcal{H}\rt {\mathcal{H}}^{{\rm op}} $ is defined for any object $\mathbb{X}=(X\st{f}\rt Y)$ by $\mathbb{X}_{\mathcal{H}}^*=(\Ker(f^*)\st{i}\rt Y^*)$; here $\mathcal{H}^{{\rm op}}$ stands for the morphism category of $\Lambda^{{\rm op}}$-modules. In particular, $(X\st{1}\rt X)^*_{\mathcal{H}}=(0 \rt X^*)$ and $(0\rt X)^*_{\mathcal{H}}=(X^*\st{1}\rt X^*)$. Further, the duality functor $D_{\mathcal{H}}:\mathcal{H}\rt \mathcal{H}^{\rm{op}}$ is recognized by sending $\mathbb{X}$ to  $D_{\mathcal{H}}(\mathbb{X})=(D(Y)\st{D(f)}\rt D(X))$; sometimes in the sequel the subscript $\mathcal{H}$ will be dropped.

\vspace{.1 cm}

It is well-known (e.g. from \cite[\S III]{AuslanreitenSmalo}) that there is an equivalence of categories $\Upsilon_{\La}:\mathcal{H}\rt \mmod T_2(\La)$.
Fortunately, $\Upsilon_{\La}$ is compatible with the aforementioned functors in the sense that one is given the commutative diagrams of functors

\[\xymatrix{\mathcal{H}\ar[r]^{(-)^*} \ar[d]_{\Upsilon_{\La}} & \mathcal{H}^{\rm{op}}\ar[d]^{\Upsilon_{\La^{\rm{op}}}} \\
\mmod T_2(\La) \ar[r]_ {(-)^*_{T_2(\La)}} & \mmod T_2(\La^{\rm{op}})}\]
and
\[\xymatrix{ \mathcal{H}\ar[r]^{D_{\mathcal{H}}} \ar[d]_{\Upsilon_{\La}} & \mathcal{H}^{\rm{op}}\ar[d]^{\Upsilon_{\La^{\rm{op}}}} \\ \mmod T_2(\La) \ar[r]_{D_{T_2(\La)}} & \mmod T_2(\La^{\rm{op}}) .}\]

With regard to this categorical equivalences and the aforementioned commutative diagrams, it is convincing to compute the Auslander-Reiten translation $\tau_{\mathcal{H}}$ of an object in terms of the duality and the transpose functors. However, in order to define the transpose ${\rm Tr}_\mathcal{H}$, we firstly need the following construction as a special case of a general procedure provided in \cite[Appendix A]{E}.

\begin{construction}\label{projective Cover}Let $\mathbb{X}=(M\st{f}\rt N)$ be an object in $\mathcal{H}$ and let $P\st{\alpha}\rt M$,  $Q\st{\beta} \rt \text{Coker}(f)$ be projective covers. By the  projectivity, there exists $\delta:Q\rt N$ such that $p \delta=\beta$, where $p:N\rt \text{Coker}(f)$ is the canonical epimorphism. Then by \cite[Theorem A1]{E} the morphism
	$$\left(\begin{smallmatrix}
\alpha\\ \left[\begin{smallmatrix}
f\alpha & \delta
\end{smallmatrix}\right]
\end{smallmatrix}\right):\left(\begin{smallmatrix}
P\\ P\oplus Q
\end{smallmatrix}\right)_{\left[\begin{smallmatrix}
	1\\ 0
	\end{smallmatrix}\right]}\rt \left(\begin{smallmatrix}M\\  N\end{smallmatrix}\right)_{f}$$	
defines the projective cover of $\mathbb{X}$ in $\mathcal{H}$.

\end{construction}
Therefore, symbolically we put $\rm{Tr}_{\mathcal{H}}(\mathbb{X})$  be defined as the cokernel of $(g)^*_{\mathcal{H}}$ where $\mathbb{P}_1\st{g}\rt \mathbb{P}_0\rt \mathbb{X}\rt 0$ is the minimal projective presentation of $\mathbb{X}$. Now, in view of the above remarks, it follows that  $\tau_{\mathcal{H}}(\mathbb{X}):=D_{\mathcal{H}} {\rm Tr}_{\mathcal{H}}(\mathbb{X})$ may be defined via the short exact sequence

$$0 \lrt \tau_{\mathcal{H}}(\mathbb{X})\lrt D_{\mathcal{H}}(\mathbb{P}_1)^*_{\mathcal{H}}\st{ D_{\mathcal{H}}(g)^*_{\mathcal{H}}}\lrt D_{\mathcal{H}}(\mathbb{P}_0)^*_{\mathcal{H}}\lrt D_{\mathcal{H}}(\mathbb{X})^*_{\mathcal{H}}\lrt 0$$
in $\mathcal{H}$.

\begin{proposition}\label{Prop 3.4}
Assume $C$ is an arbitrary $\Lambda$-module. Take the minimal projective presentation $P_1\st{\alpha}\rt P_0\rt C\rt 0$ of $C$. Then $\tau_{\mathcal{H}}(C\rt 0)\simeq (\nu P_1\st{\nu(\alpha)}\rt\nu P_0)$.
\end{proposition}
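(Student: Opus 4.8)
The plan is to run the recipe $\tau_{\mathcal{H}} = D_{\mathcal{H}}\,\mathrm{Tr}_{\mathcal{H}}$ recorded above, so that everything comes down to writing out a minimal projective presentation of $\mathbb{X} = (C\rt 0)$ in $\mathcal{H}$ by two applications of Construction \ref{projective Cover}. First I would determine $\mathbb{P}_0$: since $\Coker(C\st{0}\rt 0) = 0$ the construction gives $Q=0$, so the projective cover of $\mathbb{X}$ is $(\pi, 0):(P_0\st{1}\rt P_0)\rt(C\rt 0)$, where $\pi:P_0\rt C$ is the projective cover of $C$; hence $\mathbb{P}_0 = (P_0\st{1}\rt P_0)$. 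Taking kernels componentwise, $\Omega_{\mathcal{H}}(\mathbb{X}) = (\Omega C\st{\iota}\rt P_0)$ with $\iota$ the inclusion, and by minimality of the given presentation of $C$ the map $\alpha$ factors as $P_1\st{\alpha'}\rt\Omega C\st{\iota}\rt P_0$ with $\alpha'$ a projective cover.

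Applying Construction \ref{projective Cover} again, now to $(\Omega C\st{\iota}\rt P_0)$: its cokernel is $\simeq C$, with projective cover $\pi$, and one may take $\delta = 1_{P_0}$. Composing the resulting cover with the inclusion $\Omega_{\mathcal{H}}(\mathbb{X})\hookrightarrow\mathbb{P}_0$ gives the minimal projective presentation
$$\mathbb{P}_1 = \Big(P_1\st{\left(\begin{smallmatrix}1\\0\end{smallmatrix}\right)}\lrt P_1\oplus P_0\Big)\ \st{g}{\lrt}\ \mathbb{P}_0 = \big(P_0\st{1}\lrt P_0\big)\ \lrt\ (C\rt 0)\ \lrt\ 0,$$
where $g$ has components $\alpha:P_1\rt P_0$ and $\left[\begin{smallmatrix}\alpha & 1_{P_0}\end{smallmatrix}\right]:P_1\oplus P_0\rt P_0$. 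Now I apply $(-)^*_{\mathcal{H}}$: one gets $(\mathbb{P}_0)^*_{\mathcal{H}} = (0\rt P_0^*)$, and since $\left(\begin{smallmatrix}1\\0\end{smallmatrix}\right)^* = \left[\begin{smallmatrix}1 & 0\end{smallmatrix}\right]$ has kernel the second summand, $(\mathbb{P}_1)^*_{\mathcal{H}} = (P_0^*\st{i}\rt P_1^*\oplus P_0^*)$; under these identifications $g^*_{\mathcal{H}}$ has components $0$ and $\left(\begin{smallmatrix}\alpha^*\\1\end{smallmatrix}\right)$. Taking the cokernel componentwise, the invertible second coordinate of $\left(\begin{smallmatrix}\alpha^*\\1\end{smallmatrix}\right)$ forces the summand $P_0^*$ to cancel, and chasing $i$ through the resulting isomorphism $\Coker\left(\begin{smallmatrix}\alpha^*\\1\end{smallmatrix}\right)\simeq P_1^*$ identifies $\mathrm{Tr}_{\mathcal{H}}(C\rt 0)$ with $(P_0^*\st{\alpha^*}\rt P_1^*)$ in $\mathcal{H}^{\op}$ (up to an immaterial sign on the structure map). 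Finally $D_{\mathcal{H}}$ reverses this arrow, and invoking $D_{\Lambda}\circ(-)^*_{\Lambda} = \nu_{\Lambda}$ on projectives yields $\tau_{\mathcal{H}}(C\rt 0)\simeq(\nu P_1\st{\nu(\alpha)}\rt\nu P_0)$.

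The step I expect to require the most care is the second application of Construction \ref{projective Cover}: one has to see that $\mathbb{P}_1$ acquires the extra direct summand $P_0$ — the projective cover of $\Coker(\iota)\simeq C$ — and then check that this summand cancels cleanly under $(-)^*_{\mathcal{H}}$ and the subsequent cokernel, so that the final answer carries no spurious projective summand. It is equally essential that minimality be used throughout (so that $\alpha':P_1\rt\Omega C$ is a genuine projective cover and $g$ a minimal presentation), since without it the identification would hold only up to projective summands; granting this, the componentwise (co)kernel computations and the passage through $D_{\mathcal{H}}$ are entirely routine.
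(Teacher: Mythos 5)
Your proposal is correct and follows essentially the same route as the paper: both identify the minimal projective presentation $\bigl(P_1\st{\left[\begin{smallmatrix}1\\0\end{smallmatrix}\right]}\rt P_1\oplus P_0\bigr)\rt(P_0\st{1}\rt P_0)\rt(C\rt 0)\rt 0$ via Construction \ref{projective Cover}, apply $(-)^*_{\mathcal{H}}$, observe that the split summand $P_0^*$ cancels in the cokernel so that $\mathrm{Tr}_{\mathcal{H}}(C\rt 0)\simeq(P_0^*\st{\alpha^*}\rt P_1^*)$, and finish with $D_{\mathcal{H}}$ and $D\circ(-)^*=\nu$. The only difference is presentational: you make the two-step application of the construction (through the syzygy $(\Omega C\st{\iota}\rt P_0)$) explicit, where the paper simply cites the construction and records the resulting diagram.
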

\begin{proof}
According to Construction \ref{projective Cover}, the exact sequence

$$\xymatrix@1{   {\left(\begin{smallmatrix} P_1\\ P_1\oplus P_0\end{smallmatrix}\right)}_{\left[\begin{smallmatrix} 1 \\ 0 			 \end{smallmatrix}\right]}
		\ar[rr]^-{\left(\begin{smallmatrix} \alpha \\ \left[\begin{smallmatrix} \alpha & 1
			\end{smallmatrix}\right]\end{smallmatrix}\right)}
		& & {\left(\begin{smallmatrix}P_0\\ P_0\end{smallmatrix}\right)}_{1}\ar[rr]^-{\left(\begin{smallmatrix} g \\ 0\end{smallmatrix}\right)}& &
		{\left(\begin{smallmatrix}C \\ 0\end{smallmatrix}\right)}_{0}\ar[r]& 0 \ \  \ \ \ \  \dagger}   $$
in $\mathcal{H}$ defines the minimal projective presentation of $(C\rt 0)$. Based on what we said above, the transpose of the object $(C\rt 0)$ in $\mathcal{H}$ is then isomorphic to $(P^*_0\st{\alpha^*}\rt P^*_1)$ as depicted in the diagram
	
	 	\[\xymatrix{0 \ar[r] \ar[d] & P^*_0 \ar[r]^1 \ar[d]^{\left[\begin{smallmatrix} 1 \\ 0
	 			\end{smallmatrix}\right]} & P^*_0  \ar[r] \ar[d]^{\alpha^*} & 0 \\ 	  P^*_0 \ar[r]^>>>>{\left[\begin{smallmatrix} 1 \\ \alpha^*
	 		\end{smallmatrix}\right]} &P^*_0\oplus P^*_1 \ar[r]^>>>>>>{\left[\begin{smallmatrix} \alpha^*& -1
	 		\end{smallmatrix}\right]} & P^*_1 \ar[r]  & 0.}\]
Hence one infers the desired result by applying the duality functor $D_{\mathcal{H}}$.
\end{proof}

We restrict to self-injective algebras to compute the Auslander-Reiten translate of injective envelopes as objects in $\mathcal{H}$.

\begin{proposition}\label{Prop 3.5}
Assume $\La$ is a self-injective algebra and $C$ is a $\Lambda$-module without projective direct summands. Consider the short exact sequence $0\rt C\st{e}\rt I\rt \Omega^{-1}(C)\rt 0$ in which $e$ is the injective envelope. Then $\tau_{\mathcal{H}}(C\st{e}\rt I)\simeq (\nu P_1\st{\nu(\pi)}\rt\nu\Omega(C))$, where the exact sequence $0 \rt \Omega^2(C)\rt P_1\st{\pi}\rt \Omega(C)\rt 0$ defines the projective cover of $\Omega(C)$ . In particular, if $p:P\rt\tau\Omega^{-1}(C)$ is the projective cover, then $\tau_{\mathcal{H}}(C\st{e}\rt I)\simeq (P\st{p}\rt \tau\Omega^{-1}(C))$.	
\end{proposition}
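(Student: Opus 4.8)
The plan is to run the same machine as in the proof of Proposition~\ref{Prop 3.4}: first produce the minimal projective presentation $\mathbb{P}_1\st{g}\rt\mathbb{P}_0\rt(C\st{e}\rt I)\rt 0$ in $\mathcal{H}$ by means of Construction~\ref{projective Cover}, and then apply $(-)^*_{\mathcal{H}}$ and $D_{\mathcal{H}}$ to $g$ in order to read off $\tau_{\mathcal{H}}(C\st{e}\rt I)=D_{\mathcal{H}}\,{\rm Tr}_{\mathcal{H}}(C\st{e}\rt I)$ exactly as recorded in the short exact sequence displayed just before Proposition~\ref{Prop 3.4}. The ``in particular'' clause will then be extracted from the classical identity $\tau_{\La}\simeq\nu_{\La}\Omega^2_{\La}$ for self-injective algebras.

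For the first step I would write $P\st{\alpha}\rt C$ for the projective cover, so $\Ker\alpha=\Omega(C)$. Since $\La$ is self-injective, $I$ is projective; and since $C$ has no projective summand, the canonical epimorphism $I\rt\Omega^{-1}(C)=\Coker(e)$ is already the projective cover of $\Omega^{-1}(C)$ --- otherwise $I$ would split off a projective (hence injective) submodule sitting inside $e(C)$, which would then be a projective direct summand of $C$. Therefore in Construction~\ref{projective Cover} one may take the projective cover of $\Coker(e)$ to be $I$ itself and $\delta=1_I$, so that the projective cover of $(C\st{e}\rt I)$ is $\left(\begin{smallmatrix}\alpha\\ \left[\begin{smallmatrix}e\alpha & 1\end{smallmatrix}\right]\end{smallmatrix}\right)\colon\left(\begin{smallmatrix}P\\ P\oplus I\end{smallmatrix}\right)_{\left[\begin{smallmatrix}1\\0\end{smallmatrix}\right]}\rt\left(\begin{smallmatrix}C\\ I\end{smallmatrix}\right)_{e}$. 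A componentwise kernel computation then identifies the syzygy of $(C\st{e}\rt I)$ in $\mathcal{H}$ with the \emph{monomorphism} object $\big(\Omega(C)\st{j}\rt P\big)$, with $j$ the inclusion. Applying Construction~\ref{projective Cover} once more to this object --- its cokernel is $C$, with projective cover $P$, and its source $\Omega(C)$ has projective cover $P_1\st{\pi}\rt\Omega(C)$ --- produces $\mathbb{P}_1=\left(\begin{smallmatrix}P_1\\ P_1\oplus P\end{smallmatrix}\right)_{\left[\begin{smallmatrix}1\\0\end{smallmatrix}\right]}$ together with an explicit $g\colon\mathbb{P}_1\rt\mathbb{P}_0$ whose entries are assembled from $\pi,j,e,\alpha$ (the identity $\alpha j=0$ collapsing one entry to zero).

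Next I would dualize, and this is where self-injectivity is genuinely used: since ${}_{\La}\La$ is injective, $(-)^*_{\La}=\Hom_{\La}(-,\La)$ is an \emph{exact} functor, so it sends $0\rt\Omega(C)\rt P\rt C\rt 0$ and $0\rt C\st{e}\rt I\rt\Omega^{-1}(C)\rt 0$ to short exact sequences; in particular $e^{*}$ is surjective and $\Coker(\alpha^{*})\cong\Omega(C)^{*}$. Applying $(-)^*_{\mathcal{H}}$ to $g$ --- noting that $(\mathbb{P}_0)^{*}_{\mathcal{H}}$ and $(\mathbb{P}_1)^{*}_{\mathcal{H}}$ are again split sums of the two shapes of projectives of $\mathcal{H}^{\op}$ --- and computing the cokernel componentwise, this exactness lets one collapse the two cokernels to $\Omega(C)^{*}$ and $P_1^{*}$ and identify the connecting map with $\pi^{*}$, so that ${\rm Tr}_{\mathcal{H}}(C\st{e}\rt I)\cong\big(\Omega(C)^{*}\st{\pi^{*}}\rt P_1^{*}\big)$ in $\mathcal{H}^{\op}$ (a sign arising in the computation is absorbed by an automorphism). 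Since $D_{\mathcal{H}}$ turns a morphism object around, $D_{\mathcal{H}}(Y_1\st{\phi}\rt Y_2)=(DY_2\st{D\phi}\rt DY_1)$, and $D\circ(-)^{*}=\nu$, applying $D_{\mathcal{H}}$ gives $\tau_{\mathcal{H}}(C\st{e}\rt I)\simeq\big(\nu P_1\st{\nu(\pi)}\rt\nu\Omega(C)\big)$, which is the asserted formula.

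Finally, for the ``in particular'' clause, the same dualization argument carried out inside $\mmod\La$ shows ${\rm Tr}_{\La}M\cong(\Omega^2 M)^{*}$, hence $\tau_{\La}\simeq\nu_{\La}\Omega^2_{\La}$; together with the fact that $\Omega_{\La}\Omega^{-1}_{\La}$ is the identity on the stable category, this yields $\Omega^{2}\Omega^{-1}(C)\simeq\Omega(C)$ and so $\tau\Omega^{-1}(C)\simeq\nu\Omega(C)$. Moreover $\nu_{\La}$ is a self-equivalence of $\mmod\La$, so it carries the projective cover $\pi\colon P_1\rt\Omega(C)$ to the projective cover of $\nu\Omega(C)\simeq\tau\Omega^{-1}(C)$; hence $\nu P_1\simeq P$ and $\nu(\pi)$ is identified with $p$, giving $\tau_{\mathcal{H}}(C\st{e}\rt I)\simeq(P\st{p}\rt\tau\Omega^{-1}(C))$. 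I expect the main obstacle to be the first step --- correctly recognizing that the first syzygy of $(C\st{e}\rt I)$ in $\mathcal{H}$ is the monomorphism $(\Omega(C)\hookrightarrow P)$ and then writing down $\mathbb{P}_1$ and $g$ without error --- and the bookkeeping in the dualization, where the cokernels of the dualized maps must be matched with duals of syzygies; it is precisely this matching that makes the exactness of $\Hom_{\La}(-,\La)$, i.e.\ the self-injectivity hypothesis, indispensable.
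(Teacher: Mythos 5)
Your proposal is correct and follows essentially the same route as the paper: you build the minimal projective presentation of $(C\st{e}\rt I)$ in $\mathcal{H}$ via Construction~\ref{projective Cover} (the paper assembles the same presentation in one stroke with the Horseshoe Lemma), use self-injectivity and the absence of projective summands of $C$ to see that $I$ is the projective cover of $\Omega^{-1}(C)$ and that $(-)^*_{\La}$ is exact, arrive at ${\rm Tr}_{\mathcal{H}}(C\st{e}\rt I)\simeq(\Omega(C)^*\st{\pi^*}\rt P_1^*)$, and finish with $D_{\mathcal{H}}$ and $\tau\simeq\nu\Omega^2$. The only cosmetic difference is that the paper simply cites \cite[Theorem IV.8.5]{SY} for the last identity rather than rederiving it.
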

\begin{proof}
Let $P_0\st{c}\rt C$ be the projective cover. Then, by the Horseshoe Lemma, the exact sequence
$0 \rt \Omega^2(C)\rt P_1\rt P_0\st{c}\rt C\rt 0$ induces the commutative diagram

$$\xymatrix{& 0 \ar[d] & 0 \ar[d] & 0 \ar[d]& 0\ar[d]& &\\
	0 \ar[r] & \Omega^2(C) \ar[d] \ar[r] & P_1 \ar[d]
	\ar[r] &P_0 \ar[d] \ar[r] & C\ar[r]\ar[d]^e&0\\
	0 \ar[r] & P_1\ar[r]\ar[d]^{\pi} & P_1\oplus P_0\ar[d]
	\ar[r] & P_0\oplus I  \ar[r]\ar[d] & I\ar[r]\ar[d]& 0\\
0\ar[r] &\Omega(C)\ar[r]\ar[d]&P_0\ar[r]^{ec}\ar[d]&I\ar[d] \ar[r]&\Omega^{-1}(C) \ar[r]\ar[d]&0 &
\\ &0&0&0&0&&
} 	 $$
with exact rows and columns.
%Using that the Nakayama functor is exact over self-injective algebras, we get the following commutative diagram with exact rows and columns.
%$$\xymatrix{& 0 \ar[d] & 0 \ar[d] & 0 \ar[d]& 0\ar[d]& &\\
%	0 \ar[r] & \nu_{\La}\Omega^2(C) \ar[d] \ar[r] & \nu P_1 \ar[d]
%	\ar[r] &\nuP_0 \ar[d] \ar[r] & \nu C\ar[r]\ar[d]&0\\
%	0 \ar[r] & \nu P_1\ar[r]\ar[d]^{\nu(\pi)} & \nu P_1\oplus \nu P_0\ar[d]
%	\ar[r] & \nu P_0\oplus \nu I  \ar[r]\ar[d] & \nu I\ar[r]\ar[d]& 0\\0\ar[r] &\nu\Omega(C)\ar[r]\ar[d]&\nu P_0\ar[r]\ar[d]&\nu I\ar[d] \ar[r]&\nu\Omega^{-1}_{\La}(C) \ar[r]\ar[d]&0 &\\ &0&0&0&0&&
%} 	 $$
Since $\Lambda$ is self-injective and $C$ has no projective direct summands, we deduce that $I$ is isomorphic to the projective cover of $\Omega^{-1}(C)$. Therefore, Construction \ref{projective Cover} yields that the upper half of this diagram defines the minimal projective presentation of  $(C\st{e}\rt I)$ as well. Hence, applying the functor $(-)^*_{\mathcal{H}}$ we see, as before, that ${\rm Tr}_\mathcal{H}(C\st{e}\rt I)\simeq (\Omega(C)^*\st{{\pi}^*}\rt  P_1^*)$. The first assertion comes up. The second part follows from  \cite[Theorem IV.8.5]{SY}, stating that $\tau\simeq \nu\Omega^2$ as endo-functors over the stable category of $\Lambda$-modules.
\end{proof}

The following proposition deals with the Auslander-Reiten translate of non-projective objects in $\mathcal{H}$ that are locally represented by projective modules. This expression will be fruitful in the sequel.

\begin{proposition}\label{Prop 3.6}
	Let $(P\st{f}\rt Q)$ be an indecomposable non-projective object in $\mathcal{H}$ with $P$ and $Q$ projective. Set $M=\text{Coker}(P\st{f}\rt Q)$. Then $\tau_{\mathcal{H}}(P\st{f}\rt Q)= (0 \rt \tau M)$.
\end{proposition}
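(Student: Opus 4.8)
The plan is to compute $\tau_{\mathcal{H}}(\mathbb{X})=D_{\mathcal{H}}\,{\rm Tr}_{\mathcal{H}}(\mathbb{X})$ for $\mathbb{X}=(P\st{f}\rt Q)$ directly from a minimal projective presentation of $\mathbb{X}$ in $\mathcal{H}$, following the description of $\tau_{\mathcal{H}}$ given in the short exact sequence preceding Proposition~\ref{Prop 3.4}. Write $M=\Coker(f)$, fix a projective cover $\beta\colon Q_M\rt M$, and choose $\delta\colon Q_M\rt Q$ with $p\delta=\beta$, where $p\colon Q\rt M$ is the canonical epimorphism. Since $P$ is projective, $P\st{1}\rt P$ is its own projective cover, so Construction~\ref{projective Cover} gives the projective cover
$$\left(\begin{smallmatrix}1\\ \left[\begin{smallmatrix}f & \delta\end{smallmatrix}\right]\end{smallmatrix}\right)\colon\left(\begin{smallmatrix}P\\ P\oplus Q_M\end{smallmatrix}\right)_{\left[\begin{smallmatrix}1\\ 0\end{smallmatrix}\right]}\lrt\mathbb{X},$$
whose kernel is the object $(0\rt K)$ with $K=\Ker\!\big(\left[\begin{smallmatrix}f & \delta\end{smallmatrix}\right]\colon P\oplus Q_M\rt Q\big)$.

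The key observation is that $K$ is projective. Indeed $\left[\begin{smallmatrix}f & \delta\end{smallmatrix}\right]$ is an epimorphism — as $p\delta=\beta$ is onto one has $\im f+\im\delta=Q$ — so $0\rt K\rt P\oplus Q_M\rt Q\rt 0$ is exact with $Q$ projective, hence split. Therefore $(0\rt K)$ is itself projective in $\mathcal{H}$, it equals its own projective cover, and the minimal projective presentation of $\mathbb{X}$ reads
$$(0\rt K)\st{g}\lrt\left(\begin{smallmatrix}P\\ P\oplus Q_M\end{smallmatrix}\right)_{\left[\begin{smallmatrix}1\\ 0\end{smallmatrix}\right]}\lrt\mathbb{X}\lrt 0,\qquad g=\left(\begin{smallmatrix}0\\ \iota\end{smallmatrix}\right),$$
with $\iota\colon K\hookrightarrow P\oplus Q_M$ the split inclusion; let $b\colon K\rt Q_M$ be its second component.

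Now I apply $(-)^{*}_{\mathcal{H}}$. One computes $\left(\begin{smallmatrix}P\\ P\oplus Q_M\end{smallmatrix}\right)^{*}_{\mathcal{H}}\simeq\big(Q_M^{*}\rt(P\oplus Q_M)^{*}\big)$ (the canonical split inclusion) and, from the description of $(-)^{*}_{\mathcal{H}}$ recalled above, $(0\rt K)^{*}_{\mathcal{H}}=(K^{*}\st{1}\rt K^{*})$; under these identifications $g^{*}_{\mathcal{H}}$ has first component $b^{*}\colon Q_M^{*}\rt K^{*}$ and second component $\iota^{*}\colon(P\oplus Q_M)^{*}\rt K^{*}$. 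As $\iota$ is a split monomorphism, $\iota^{*}$ is a split epimorphism, so the second coordinate of ${\rm Tr}_{\mathcal{H}}(\mathbb{X})=\Coker(g^{*}_{\mathcal{H}})$ vanishes and ${\rm Tr}_{\mathcal{H}}(\mathbb{X})\simeq\big(\Coker(b^{*})\rt 0\big)$. A short chase in the definition of $K$ gives $\im(b)=\Ker\beta$ and $\Ker(b)\simeq\Ker f$, so $K\st{b}\rt Q_M\st{\beta}\rt M\rt 0$ is a projective presentation of $M$, and in fact the \emph{minimal} one; hence $\Coker(b^{*})={\rm Tr}M$, and applying $D_{\mathcal{H}}$ gives $\tau_{\mathcal{H}}(\mathbb{X})=D_{\mathcal{H}}\big({\rm Tr}M\rt 0\big)=\big(0\rt D\,{\rm Tr}M\big)=(0\rt\tau M)$.

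The step I expect to be the main obstacle is precisely the minimality assertion — that $K\twoheadrightarrow\Ker\beta$ is a projective cover — and this is exactly where indecomposability of $\mathbb{X}$ enters. If this corestriction of $b$ were not a projective cover, then $K$ would contain a nonzero projective direct summand $K'$ with $b|_{K'}=0$; via the split inclusion $\iota$ this $K'$ maps isomorphically onto a direct summand of $P$ on which $f$ vanishes, so $(K'\rt 0)$ would be a direct summand of $\mathbb{X}$, forcing $K'=0$ unless $Q=0$, i.e.\ unless $\mathbb{X}$ is the degenerate object $(P\rt 0)$. That degenerate case genuinely escapes the formula — there $M=0$ whereas $\tau_{\mathcal{H}}(P\rt 0)\simeq(0\rt\nu P)\neq 0$ by Proposition~\ref{Prop 3.4} — so the statement is to be understood with $Q\neq 0$, and then the presentation is minimal and everything above goes through.
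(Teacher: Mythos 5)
Your proof is correct and follows the same overall strategy as the paper: build the minimal projective presentation of $\mathbb{X}=(P\st{f}\rt Q)$ in $\mathcal{H}$ from Construction \ref{projective Cover}, observe that the first syzygy is a projective object of the form $(0\rt K)$, and then push everything through $D_{\mathcal{H}}(-)^*_{\mathcal{H}}$. The one structural difference is where indecomposability enters: the paper first proves that $Q\rt M$ and $P\rt\im(f)$ are projective covers, so that $P\st{f}\rt Q\rt M\rt 0$ is itself the minimal projective presentation of $M$ and the whole computation becomes completely explicit (with $K\simeq P$ and $b$ identified with $\pm f$); you instead start from an abstract projective cover $Q_M\rt M$ and verify minimality of the induced presentation $K\st{b}\rt Q_M\rt M\rt 0$ at the end. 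The two verifications are equivalent in content, and your direct-summand argument for minimality is sound (a direct summand of $P\oplus Q_M$ contained in $\Ker f\oplus 0$ is, by the modular law, a direct summand of $P$ killed by $f$). Your observation about the degenerate case is a genuine and worthwhile addition: for $\mathbb{X}=(P\rt 0)$ with $P$ indecomposable projective, the hypotheses of the proposition hold but $M=0$ while $\tau_{\mathcal{H}}(P\rt 0)\simeq(0\rt\nu P)\neq 0$ by Proposition \ref{Prop 3.4}; the paper's own proof silently excludes this case (its claim that $M$ is non-projective fails there), so the statement should indeed be read with $Q\neq 0$.
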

\begin{proof}
Since $(P\st{f}\rt Q)$ is indecomposable and non-projective in $\mathcal{H}$, we see that $M$ is indecomposable and non-projective. In fact, $M$ is not projective since otherwise the non-zero object $(\Ker(f)\rt 0)$ would be a direct summand of $(P\st{f}\rt Q)$.  We then claim that the quotient map $p: Q\rt M$ determines the projective cover of $M$. For, take $S\st{\lambda}\rt M$ to be the projective cover. This induces a $\Lambda$-homomorphism $\alpha: S\rt Q$ with $p\alpha=\lambda$. Likewise, there exists another map $\gamma:Q\rt S$ satisfying $\lambda\gamma=p$. Hence $\lambda=\lambda(\gamma\alpha)$ and the composite $\gamma\alpha$ should then be an isomorphism. These give rise to the endomorphism $(1, \alpha\gamma)$ of $(P\st{f}\rt Q)$ which should be either nilpotent or an isomorphism according to indecomposability of $(P\st{f}\rt Q)$. However, it is clearly not nilpotent; so $\alpha\gamma$ should be an isomorphism, indicating that $p: Q\rt M$ is the projective cover.

Meanwhile, it follows in a similar manner that $P \rt {\rm Im} (f)$ is the projective cover as well. Therefore, the minimal projective presentation $P\st{f}\rt Q\rt M \rt 0 $ of $M$ in $\mmod \La$ comes up. Now, the indecomposability of $(P\st{f}\rt Q)$ besides uniqueness of minimal projective presentations up to isomorphism gives the indecomposability of $M$. Indeed, any decomposition of $M$ would lead in a decomposition of the indecomposable object  $(P\st{f}\rt Q)$ due to the uniqueness of minimal projective presentations.

\vspace{.1 cm}

Applying Construction \ref{projective Cover} then gives the commutative diagram
\[\xymatrix{0\ar[r]&0 \ar[r] \ar[d] & P\ar[r]^1 \ar[d]^{\left[\begin{smallmatrix} 1\\0
				\end{smallmatrix}\right]} & P  \ar[r] \ar[d]^f & 0 \\0 \ar[r]&
		P \ar[r]^{\left[\begin{smallmatrix} -1 \\ f
			\end{smallmatrix}\right]} &P\oplus Q \ar[r]^{\left[\begin{smallmatrix} f &1
			\end{smallmatrix}\right]} & Q \ar[r]  & 0}\]
that provides us with the minimal projective presentation of $(P\st{f}\rt Q)$ in $\mathcal{H}$. It therefore turns out as before that $\tau_{\mathcal{H}}(P\st{f}\rt Q)$ is determined via the commutative diagram

\[\xymatrix{0 \ar[r]&0\ar[r]\ar[d]& \nu P \ar[r]^{\left[\begin{smallmatrix} 1 \\ \nu(f)
				\end{smallmatrix}\right]} \ar[d]^1 & \nu P\oplus \nu Q  \ar[d]^{\left[\begin{smallmatrix} 0 & 1
			\end{smallmatrix}\right]}    \\
0 \ar[r] & \tau M\ar[r]&
		\nu P \ar[r]^{\nu(f)} &\nu Q.      }\]
This completes the proof.
\end{proof}

\section{Certain almost split sequences in $\mathcal{H}$} \label{Section 4}

In this section the almost split sequences in $\mathcal{H}$ terminating in (or starting from) some certain objects will be determined explicitly. In the route to this goal, we apply some statements from previous section.  The Auslander-Reiten translates determined by the following lemma  will be used in the sequel; this is quoted from \cite{MO}.

\begin{lemma}\label{Lemma 5.2}
	Assume  $\delta:  0 \rt  A\st{f} \rt B\st{g} \rt C \rt 0$ is  an almost split sequence in $\mmod \La.$ Then
	\begin{itemize}
		\item[$(1)$] The almost split sequence in $\mathcal{H}$ ending at $(0\rt C)$ is of the form
			$$\xymatrix@1{  0\ar[r] & {\left(\begin{smallmatrix} A\\ A\end{smallmatrix}\right)}_{1}
			\ar[rr]^-{\left(\begin{smallmatrix} 1 \\ f\end{smallmatrix}\right)}
			& & {\left(\begin{smallmatrix}A\\ B\end{smallmatrix}\right)}_{f}\ar[rr]^-{\left(\begin{smallmatrix} 0 \\ g\end{smallmatrix}\right)}& &
			{\left(\begin{smallmatrix}0\\ C\end{smallmatrix}\right)}_{0}\ar[r]& 0. } \ \    $$		
	
		\item [$(2)$] The almost split sequence in $\mathcal{H}$ ending at $(C\st{1}\rt C)$ is of the form
			$$\xymatrix@1{  0\ar[r] & {\left(\begin{smallmatrix} A\\ 0\end{smallmatrix}\right)}_{0}
			\ar[rr]^-{\left(\begin{smallmatrix} f \\ 0 \end{smallmatrix}\right)}
			& & {\left(\begin{smallmatrix} B\\ C\end{smallmatrix}\right)}_{g}\ar[rr]^-{\left(\begin{smallmatrix} g \\ 1\end{smallmatrix}\right)}& &
			{\left(\begin{smallmatrix}C \\ C\end{smallmatrix}\right)}_{1}\ar[r]& 0. } \ \    $$		
		\end{itemize}
\end{lemma}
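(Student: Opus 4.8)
The plan is to verify each of the two formulas directly from the machinery set up in Section~2, namely by computing the minimal projective presentation of the relevant object in $\mathcal{H}$ via Construction~\ref{projective Cover}, dualizing, and recognizing the Auslander--Reiten translate; then one checks that the proposed sequence is indeed almost split by confirming its right-hand term, its left-hand term (which must be $\tau_{\mathcal{H}}$ of the right-hand term), and non-splitness. Alternatively, and more economically, I would try to produce these sequences by applying a known exact functor between $\mmod\La$ and $\mathcal{H}$ to the given almost split sequence $\delta$; the point is that $(0\rt -)$ and $(-\st{1}\rt -)$ are the two natural fully faithful embeddings of $\mmod\La$ into $\mathcal{H}$, and one expects them to interact predictably with almost split sequences. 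Since the statement is quoted from \cite{MO}, the cleanest write-up is probably the direct computational one, so that the paper is self-contained.

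For part $(1)$, first I would compute $\tau_{\mathcal{H}}(0\rt C)$. Taking a minimal projective presentation $P_1\st{\alpha}\rt P_0\rt C\rt 0$ of $C$ in $\mmod\La$, Construction~\ref{projective Cover} gives the minimal projective presentation of $(0\rt C)$ in $\mathcal{H}$, from which one reads off (after applying $(-)^*_{\mathcal{H}}$ and then $D_{\mathcal{H}}$, exactly as in the proofs of Propositions~\ref{Prop 3.4}--\ref{Prop 3.6}) that $\tau_{\mathcal{H}}(0\rt C)\simeq(A\st{1}\rt A)$ is impossible dimensionally; rather one should find $\tau_{\mathcal{H}}(0\rt C)\simeq(\tau C\st{1}\rt\tau C)$ is also wrong — the correct identification, consistent with the claimed left term, is $\tau_{\mathcal{H}}(0\rt C)\simeq(A\st{1}\rt A)$ where $A=\tau C$, i.e. $\tau_{\mathcal{H}}(0\rt C)\simeq(\tau C\st{1}\rt\tau C)$. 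Having pinned down the translate, I would then exhibit the displayed short exact sequence, check exactness of the two rows of $\La$-maps (trivial, since $1$ is mono and $(0,g)$ is epi with the obvious kernel), and verify that it does not split: a splitting would force $(A\st{1}\rt A)$ to be a direct summand of $(A\st{f}\rt B)$, contradicting indecomposability coming from $\delta$ being almost split. Finally, to conclude it is the almost split sequence, I would either invoke uniqueness (it is a non-split sequence with the correct indecomposable end terms related by $\tau_{\mathcal{H}}$, and the middle term has the right length) or check directly that $(0,g)$ is right almost split, using that $g$ is right almost split in $\mmod\La$ to lift an arbitrary non-split-epi map $(X\st{h}\rt Y)\rt(0\rt C)$.

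For part $(2)$, the argument is entirely parallel: one computes, again via Construction~\ref{projective Cover} applied to a minimal projective presentation of $(C\st{1}\rt C)$, that $\tau_{\mathcal{H}}(C\st{1}\rt C)\simeq(\tau C\rt 0)=(A\rt 0)$, which matches the claimed left term; then one verifies exactness of the displayed sequence and its non-splitness (a splitting would make $(A\rt 0)$ a summand of $(B\st{g}\rt C)$, again contradicting the almost split property of $\delta$); then one concludes by uniqueness or by directly checking the right almost split property of $\left(\begin{smallmatrix}g\\1\end{smallmatrix}\right)$ using that $g$ is right almost split. I expect the main obstacle to be purely bookkeeping: getting the identifications of $\tau_{\mathcal{H}}$ on $(0\rt C)$ and on $(C\st{1}\rt C)$ right, which requires careful tracking of the $(-)^*_{\mathcal{H}}$ and $D_{\mathcal{H}}$ functors through the projective presentations — exactly the kind of diagram chase carried out in Propositions~\ref{Prop 3.4} and \ref{Prop 3.6}. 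Once those translates are in hand, establishing that the two displayed sequences are almost split is routine, either by the uniqueness of almost split sequences or by a short lifting argument reducing everything to the fact that $f$ is left almost split and $g$ is right almost split in $\mmod\La$.
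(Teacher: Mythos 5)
The paper does not actually prove this lemma; it simply cites \cite[Proposition 3.1]{MO}. So any self-contained argument goes beyond what the paper records, and your overall plan (compute $\tau_{\mathcal{H}}$ of the end terms via Construction \ref{projective Cover}, then verify the almost split property) is the right shape. Your identifications $\tau_{\mathcal{H}}(0\rt C)\simeq(\tau C\st{1}\rt\tau C)=(A\st{1}\rt A)$ and $\tau_{\mathcal{H}}(C\st{1}\rt C)\simeq(\tau C\rt 0)=(A\rt 0)$ are correct and do follow from the construction exactly as in Propositions \ref{Prop 3.4} and \ref{Prop 3.6}. (The sentence in which you declare this identification ``impossible dimensionally,'' then ``also wrong,'' and then correct after all, is self-contradictory as written and must be cleaned up, but the endpoint is right.)

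The genuine gap is your first proposed way of finishing: ``invoke uniqueness (it is a non-split sequence with the correct indecomposable end terms related by $\tau_{\mathcal{H}}$, and the middle term has the right length).'' That is not a valid criterion. A non-split exact sequence $0\rt \tau C\rt E\rt C\rt 0$ with both ends indecomposable need not be almost split: the almost split sequences correspond only to the socle of $\Ext^1(C,\tau C)$ as an $\End(C)$-module, so whenever $\underline{\End}(C)$ has nontrivial radical there are non-split extensions with exactly these end terms that are not almost split. The correct supplementary condition is the one the paper itself uses in Proposition \ref{Prop4.1} via \cite[\S V, Proposition 2.2]{AuslanreitenSmalo}: every non-isomorphism of the left-hand term must factor through the monomorphism. (There is also no ``length of the middle term'' criterion.) Your second branch --- directly checking that $\left(\begin{smallmatrix}0\\ g\end{smallmatrix}\right)$, resp.\ $\left(\begin{smallmatrix}g\\ 1\end{smallmatrix}\right)$, is right almost split --- does work and should be the actual argument rather than a fallback: for $(1)$, a morphism $(X\st{h}\rt Y)\rt(0\rt C)$ is a pair $(0,u)$ with $uh=0$; it is a retraction iff $u$ is a split epimorphism, and otherwise $u=gv$ with $g(vh)=uh=0$, so $vh$ lifts through $\mathrm{Im}(f)$ to give the required factorization. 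Part $(2)$ is dual. With that branch written out, the proof is complete; as it stands, the proposal's primary justification would fail.
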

\begin{proof}
See \cite[Proposition 3.1]{MO}.
\end{proof}

Let us start with the following kind of almost split sequences.

\begin{proposition}\label{Prop4.1}
	Let $\delta: \  0 \rt A \st{f}\rt B \st{g}\rt C \rt 0$ and $\delta': \ 0 \rt A'\st{f'}\rt B'\st{g'}\rt A\rt 0$ be  almost split sequences in $\mathcal{H}$. Then
	$$\xymatrix@1{0\ar[r] & {\left(\begin{smallmatrix} B'\\ A\end{smallmatrix}\right)}_{g'}
		\ar[rr]^-{\left(\begin{smallmatrix}\left[\begin{smallmatrix} g'\\1
			\end{smallmatrix}\right]\\\left[\begin{smallmatrix} 1\\ f
			\end{smallmatrix}\right]
			\end{smallmatrix}\right)}
		& & {\left(\begin{smallmatrix}A\\ A\end{smallmatrix}\right)}_{1}\oplus{\left(\begin{smallmatrix}B'\\
			B\end{smallmatrix}\right)}_{fg'}\ar[rr]^-{\left(\begin{smallmatrix}\left[\begin{smallmatrix} -1 &g'
			\end{smallmatrix}\right]\\\left[\begin{smallmatrix} -f &1
			\end{smallmatrix}\right]
			\end{smallmatrix}\right)}& &
		{\left(\begin{smallmatrix}A\\B\end{smallmatrix}\right)}_{f}\ar[r]& 0 },$$
	is an almost split sequence  in $\mathcal{H}$. Further, $\left(\begin{smallmatrix}B'\\
	B\end{smallmatrix}\right)_{fg'}$ is  an indecomposable object.
\end{proposition}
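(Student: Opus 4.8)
The statement has two parts: that the displayed sequence is an almost split sequence in $\mathcal{H}$ ending at $\left(\begin{smallmatrix} A\\ B\end{smallmatrix}\right)_f$, and that the summand $\left(\begin{smallmatrix} B'\\ B\end{smallmatrix}\right)_{fg'}$ is indecomposable. The plan is to establish the first part by verifying that the left-hand term is the correct Auslander--Reiten translate $\tau_{\mathcal{H}}\left(\begin{smallmatrix} A\\ B\end{smallmatrix}\right)_f$ and then checking exactness and non-splitness, which together with the fact that $\left(\begin{smallmatrix} A\\ B\end{smallmatrix}\right)_f$ is indecomposable non-projective (which it is, being the midterm of $\delta'$ shifted, or simply because $f$ is not a split mono since $\delta'$ is non-split) forces the sequence to be almost split. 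Concretely, first I would apply the recipe built from Construction \ref{projective Cover} and the formula $\tau_{\mathcal{H}}=D_{\mathcal{H}}\,\mathrm{Tr}_{\mathcal{H}}$ to the object $\left(\begin{smallmatrix} A\\ B\end{smallmatrix}\right)_f$: since $f$ is a monomorphism with $\mathrm{Coker}(f)=C$, the minimal projective presentation of $\left(\begin{smallmatrix} A\\ B\end{smallmatrix}\right)_f$ in $\mathcal{H}$ is assembled from the minimal projective presentations of $A$ and of $C$ in $\mmod\La$, exactly as in the proofs of Propositions \ref{Prop 3.4}--\ref{Prop 3.6}. Taking $(-)^*_{\mathcal{H}}$, cokernels, and then $D_{\mathcal{H}}$ should yield an object whose entries are built from $\tau A$ and $\tau C$; the point is then to recognize, using $\delta$ and $\delta'$, that this object is isomorphic to $\left(\begin{smallmatrix} B'\\ A\end{smallmatrix}\right)_{g'}$ (here one uses $\tau C = A$ from $\delta$ and that $g': B'\rt A$ is the minimal right almost split map onto $A$, whose relation to a syzygy-type description of $\tau A$ via $\delta'$ gives the middle term $B'$).

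Second, with the translate identified, I would verify directly that the displayed three-term sequence is exact in $\mathcal{H}$ — i.e. exact in each of the two rows — which is a routine diagram check: the left map is a monomorphism because its first component $\left[\begin{smallmatrix} g'\\ 1\end{smallmatrix}\right]$ already is; the composite of the two maps is zero by a $2\times 2$ matrix computation; and surjectivity of the right map together with a dimension/length count (or a direct splitting-off argument on kernels and cokernels) gives exactness in the middle. Then I would show the sequence does not split: if it did, $\left(\begin{smallmatrix} A\\ B\end{smallmatrix}\right)_f$ would be a direct summand of $\left(\begin{smallmatrix}A\\ A\end{smallmatrix}\right)_1\oplus\left(\begin{smallmatrix} B'\\ B\end{smallmatrix}\right)_{fg'}$, and chasing this through $\mathrm{Coker}$ (which sends these objects to $C$, $0$, and a quotient of $B$ respectively) together with indecomposability of $\left(\begin{smallmatrix} A\\ B\end{smallmatrix}\right)_f$ yields a contradiction with the non-splitness of $\delta$ and $\delta'$. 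Since a non-split exact sequence ending at an indecomposable non-projective object whose left term is the Auslander--Reiten translate is automatically almost split (by the characterization in \cite{AuslanreitenSmalo, AS}), the first claim follows.

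For the indecomposability of $\left(\begin{smallmatrix} B'\\ B\end{smallmatrix}\right)_{fg'}$, I would argue as in Proposition \ref{Prop 3.6}: compute $\mathrm{Coker}(fg')$ and the relevant images to see that this object has a prescribed minimal projective/injective-type presentation forced by $\delta$ and $\delta'$, so any decomposition of it would induce a decomposition of data known to be indecomposable (for instance of $B'$, or of the almost split sequence $\delta$). Alternatively, and perhaps more cleanly, one can observe that in the almost split sequence just established the middle term's indecomposable summands are controlled by the right minimal almost split map into $\left(\begin{smallmatrix} A\\ B\end{smallmatrix}\right)_f$; since $\left(\begin{smallmatrix}A\\ A\end{smallmatrix}\right)_1$ is visibly indecomposable and splits off, the complement $\left(\begin{smallmatrix} B'\\ B\end{smallmatrix}\right)_{fg'}$ must correspond to a single arrow in the Auslander--Reiten quiver ending at $\left(\begin{smallmatrix} A\\ B\end{smallmatrix}\right)_f$ precisely when it is indecomposable — but to make that rigorous one still needs an intrinsic argument, so I would fall back on the Coker/Im presentation argument.

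\textbf{Main obstacle.} The crux is the identification of $\tau_{\mathcal{H}}\left(\begin{smallmatrix} A\\ B\end{smallmatrix}\right)_f$ with $\left(\begin{smallmatrix} B'\\ A\end{smallmatrix}\right)_{g'}$: one must correctly produce the \emph{minimal} projective presentation of $\left(\begin{smallmatrix} A\\ B\end{smallmatrix}\right)_f$ in $\mathcal{H}$ (Construction \ref{projective Cover} requires the projective cover of $\mathrm{Coker}(f)=C$, not merely an epimorphism onto it), dualize carefully keeping track of which syzygies and cosyzygies appear, and then match the outcome against the data packaged in the almost split sequences $\delta$ and $\delta'$ — in particular recognizing $B'$ as arising from $\tau A$ together with the minimal right almost split map $g'$. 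Everything downstream (exactness, non-splitness, indecomposability of the summand) is comparatively mechanical once this identification is in hand.
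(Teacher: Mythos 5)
Your plan founders at two points. First, the step you yourself flag as the crux --- identifying $\tau_{\mathcal{H}}\left(\begin{smallmatrix} A\\ B\end{smallmatrix}\right)_f$ with $\left(\begin{smallmatrix} B'\\ A\end{smallmatrix}\right)_{g'}$ by computing a minimal projective presentation in $\mathcal{H}$ and dualizing --- is not workable as described: the module $B'$ is the middle term of the almost split sequence ending at $A$, and there is no ``syzygy-type description'' of it; it is not determined by projective presentations of $A$ and $C$, so the outcome of the transpose computation cannot be matched against $\left(\begin{smallmatrix} B'\\ A\end{smallmatrix}\right)_{g'}$ in the way you suggest. The paper avoids this entirely: by Lemma \ref{Lemma 5.2}, $\left(\begin{smallmatrix} A\\ B\end{smallmatrix}\right)_f$ is the (indecomposable) middle term of the almost split sequence ending at $(0\rt C)$, while $\left(\begin{smallmatrix} B'\\ A\end{smallmatrix}\right)_{g'}$ is the middle term of the one ending at $(A\st{1}\rt A)=\tau_{\mathcal{H}}(0\rt C)$; the mesh these form in the Auslander--Reiten quiver together with \cite[VII, Proposition 1.5]{AuslanreitenSmalo} forces $\tau_{\mathcal{H}}\left(\begin{smallmatrix} A\\ B\end{smallmatrix}\right)_f\simeq\left(\begin{smallmatrix} B'\\ A\end{smallmatrix}\right)_{g'}$.

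Second, and more seriously, your closing step rests on a false criterion: a non-split exact sequence $0\rt\tau_{\mathcal{H}}\mathbb{X}\rt\mathbb{B}\rt\mathbb{X}\rt 0$ with $\mathbb{X}$ indecomposable non-projective need \emph{not} be almost split. Almost split sequences correspond only to the socle of $\Ext^1(\mathbb{X},\tau_{\mathcal{H}}\mathbb{X})$ over the endomorphism ring of $\mathbb{X}$, and whenever the stable endomorphism ring is not a division ring there are non-split extensions with exactly the right end terms that are not almost split. The correct criterion (\cite[V, Proposition 2.2]{AuslanreitenSmalo}, the one the paper invokes) requires in addition that every non-automorphism of the left term factor through the left-hand map; verifying this is the bulk of the paper's proof (the case analysis on $\phi_1,\phi_2$ using that $\delta'$ is almost split), and it is absent from your plan. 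Your argument for indecomposability of $\left(\begin{smallmatrix} B'\\ B\end{smallmatrix}\right)_{fg'}$ is likewise not yet a proof: $fg'$ is neither a minimal projective presentation nor controlled by its cokernel, so no ``Coker/Im presentation'' argument applies directly. The paper instead splits off hypothetical summands of the shapes $(L\rt 0)$ and $(0\rt E)$, shows by AR-theoretic arguments that these would have to be injective resp.\ projective and then that they vanish, and finally uses that $\mathrm{Im}(fg')=A$ is indecomposable to conclude.
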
	
\begin{proof}
Firstly, since $\delta$ and $\delta'$ are almost split, we deduce that the objects $(A\st{f}\rt B)$ and $(B'\st{g'}\rt A)$ are indecomposable.
We shall show that $\tau_{\mathcal{H}}(A\st{f}\rt B)\simeq (B'\st{g'}\rt A)$.  From Lemma \ref{Lemma 5.2} we have the almost split sequences

	$$\xymatrix@1{ 0\ar[r] & {\left(\begin{smallmatrix} A\\ A\end{smallmatrix}\right)}_{1}
		\ar[rr]^-{\left(\begin{smallmatrix} 1 \\ f \end{smallmatrix}\right)}
		& & {\left(\begin{smallmatrix}A\\ B\end{smallmatrix}\right)}_{f}\ar[rr]^-{\left(\begin{smallmatrix} 0 \\ g\end{smallmatrix}\right)}& &
		{\left(\begin{smallmatrix}0 \\ C\end{smallmatrix}\right)}_{0}\ar[r]& 0, } \ \ \ \text{and}   $$
	$$\xymatrix@1{ 0\ar[r] & {\left(\begin{smallmatrix} A'\\ 0\end{smallmatrix}\right)}_{0}
		\ar[rr]^-{\left(\begin{smallmatrix} f' \\ 0\end{smallmatrix}\right)}
		& & {\left(\begin{smallmatrix}B'\\ A\end{smallmatrix}\right)}_{g'}\ar[rr]^-{\left(\begin{smallmatrix} g' \\ 1\end{smallmatrix}\right)}& &
		{\left(\begin{smallmatrix}A\\A\end{smallmatrix}\right)}_{1}\ar[r]& 0 }$$
in $\mathcal{H}$. These give rise to the following mesh in the Auslander-Reiten quiver of $\mathcal{H}$.

		$$	\xymatrix@-5mm{		
		&[B'A_{g'}]\ar[dr]&&[AB_f]\ar[dr]\\
[A'0]\ar[ur]&&[AA_1]\ar[ur]\ar@{.>}[ll]&&\ar@{.>}[ll][0C]	}	$$		
In view of \cite[\S VII, Proposition 1.5]{AuslanreitenSmalo}, 	one infers the aforementioned assertion. Notice that the sequence in the statement does not split since otherwise, we obtain either $(A\st{f}\rt B)\simeq (A\st{1}\rt A)$ or $(B'\st{g'}\rt A)\simeq (A\st{1}\rt A)$, both of which are impossible because $\delta$ and $\delta'$ do not split.

\vspace{.2 cm}

Consequently, thanks to \cite[\S V, Proposition 2.2]{AuslanreitenSmalo}, it suffices to show that any non-isomorphism
$\left(\begin{smallmatrix}
\phi_1\\\phi_2
\end{smallmatrix}\right):\left(\begin{smallmatrix}
B'\\ A
\end{smallmatrix}\right)_{g'}\rt \left(\begin{smallmatrix}
B'\\A
\end{smallmatrix} \right)_{g'}$ factors over
$\left(\begin{smallmatrix}\left[\begin{smallmatrix} g'\\1
	\end{smallmatrix}\right]\\\left[\begin{smallmatrix} 1\\ f
	\end{smallmatrix}\right]
\end{smallmatrix}\right)$.
Since $\left(\begin{smallmatrix}
\phi_1\\\phi_2
\end{smallmatrix}\right)$ is a non-isomorphism, at least one of $\phi_1$ and  $\phi_2$ is so. If it happens that $\phi_2$ is a non-isomorphism, then since $\delta'$ is an almost split sequence, there is a $\Lambda$-map $s:A\rt B'$ such that $g's=\phi_2.$ But then $g'(\phi_1-sg')=0$ so that there is $r:B'\rt A'$ with $\phi_1=f'r+sg'$. Now it is straightforward to verify that
	
$$\left(\begin{smallmatrix}\left[\begin{smallmatrix}s & f'r\end{smallmatrix}\right]\\ \left[\begin{smallmatrix}\phi_2  & 0\end{smallmatrix}\right]\end{smallmatrix}\right):\left(\begin{smallmatrix}A\\ A\end{smallmatrix}\right)_{1}\oplus\left(\begin{smallmatrix}B'\\
B\end{smallmatrix}\right)_{fg'}\rt \left(\begin{smallmatrix}B'\\ A\end{smallmatrix}\right)_{g'}$$
defines a morphism in $\mathcal{H}$ and does the factorization job as well.

%Let us figure the relevant commutativity as follows.

%$$\xymatrix@R=0.3cm@C=0.5cm{ & B'\ar[dd]^{g'}\ar[dl]_{\phi_1}\ar[rrr]^{\left(\begin{smallmatrix}
%		g'\\1
%		\end{smallmatrix}\right)}& & & A \oplus B'\ar[dd]^-{\left(\begin{smallmatrix}
%		1& 0\\ 0 & fg
%		\end{smallmatrix}\right)} \ar[dllll]_>>>>>>>>>>>>>>>>{\left(\begin{smallmatrix}
%		s & f'r
%		\end{smallmatrix}\right)}\\
%	B'\ar[dd]^<<<<{g'} & & &\\
%	& A\ar[rrr]^-{\left(\begin{smallmatrix}
%		1\\f
%		\end{smallmatrix}\right)}\ar[dl]_<<{\phi_2} &
%	 & & A\oplus B \ar[dllll]^{\left(\begin{smallmatrix}
%	 	\phi_2 & 0
%	 	\end{smallmatrix}\right)}\\
%	A
%	& & & &}$$
\vspace{.1 cm}

Now we show it never happens that $\phi_2$ is an isomorphism and $\phi_1$ is not.  Assume, to the contrary, that this is the case.	The morphism $\left(\begin{smallmatrix}
	\phi_1\\\phi_2
\end{smallmatrix}\right)$
then induces a commutative diagram
	
$$\xymatrix
{0\ar[r] & A'\ar[r]^{f'}\ar[d]^{\phi_3} & B'\ar[r]^{g'}\ar[d]^{\phi_1} & A\ar[r] \ar[d]^{\phi_2} & 0\\
	0\ar[r] & A'\ar[r]_{f'} &B' \ar[r]_{g'} & A\ar[r] & 0
}$$	
where  $\phi_3$ must be a non-isomorphism according to the hypothesis. Now since $\delta'$ is an almost split sequences, there is $\la_1:B'\rt A'$ such that $\la_1f'=\phi_3$. However, as in the above lines, one obtains a $\Lambda$-map $\la_2:A\rt B'$ satisfying $\phi_1=\la_2g'+f'\la_1$ and $g'\la_2=\phi_2$. Since $\phi_2$ is an isomorphism, it follows that $g'$ is a split epimorphism, which is a contradiction.

\vspace{.1 cm}

To prove the second assertion, we may clearly think of $\left(\begin{smallmatrix}B'\\
B\end{smallmatrix}\right)_{fg'}$ as $\left(\begin{smallmatrix}B'\\
B\end{smallmatrix}\right)_{fg'}= \left(\begin{smallmatrix}X\\
	Y\end{smallmatrix}\right)_{v}\oplus\mathbb{L}\oplus\mathbb{E}$
where  $\mathbb{L}$ and $\mathbb{E}$ are respectively isomorphic to a direct sum of indecomposable objects of the form
$\left(\begin{smallmatrix} L\\
	0\end{smallmatrix}\right)$ and
$\left(\begin{smallmatrix} 0\\
	E\end{smallmatrix}\right)$
and  $\left(\begin{smallmatrix}X\\
	Y\end{smallmatrix}\right)_{v}$
has no direct summand of the two other types. We claim that $\mathbb{L}$ and $\mathbb{E}$ are respectively injective and projective.	For take an indecomposable direct summand $\left(\begin{smallmatrix} L\\
	0\end{smallmatrix}\right)$
of $\mathbb{L}$ and suppose, to the contrary, that $L$ is non-injective. Hence there exists an almost split sequence $0\rt L\st{x}\rt M\st{y}\rt N\rt 0$. By Lemma \ref{Lemma 5.2},  the almost split sequence in $\mathcal{H}$ starting from $\left(\begin{smallmatrix}L\\
	0\end{smallmatrix}\right)$ is of the form

	$$\xymatrix@1{0\ar[r] & {\left(\begin{smallmatrix} L\\ 0\end{smallmatrix}\right)}
		\ar[rr]^-{\left(\begin{smallmatrix} x \\ 0\end{smallmatrix}\right)}
		& & {\left(\begin{smallmatrix}M\\ N\end{smallmatrix}\right)}_{y}\ar[rr]^-{\left(\begin{smallmatrix} y \\ 1\end{smallmatrix}\right)}& &
		{\left(\begin{smallmatrix}N\\ N\end{smallmatrix}\right)}_{1}\ar[r]& 0 }.$$
Then \cite[Proposition 3.3]{AR} implies that the object $\left(\begin{smallmatrix} A\\ B\end{smallmatrix}\right)_f$ is a direct summand of $\left(\begin{smallmatrix} M\\ N\end{smallmatrix}\right)_y$. But since the latter one is indecomposable, we get $\left(\begin{smallmatrix} A \\ B\end{smallmatrix}\right)_f\simeq \left(\begin{smallmatrix} M \\ N\end{smallmatrix}\right)_y$  which is absurd because $f$ is not  an isomorphism. Thus $\mathbb{L}$ should be injective; that $\mathbb{E}$ is projective is verified similarly. Now if $\mathbb{E}$  is non-zero, it admits a non-zero indecomposable direct summand $\left(\begin{smallmatrix} 0\\ E\end{smallmatrix}\right)$. But this is projective and one observes, using \cite[Proposition 3.1]{AR}, that $\left(\begin{smallmatrix} B' \\ A\end{smallmatrix}\right)_{g'}$ is a subobject of the radical $\left(\begin{smallmatrix} 0 \\ \text{rad}(E)\end{smallmatrix}\right)$ of $\left(\begin{smallmatrix} 0 \\ E\end{smallmatrix}\right)$. This contradiction shows that $\mathbb{E}=0$ and, likewise, $\mathbb{L}=0$. Therefore  $\left(\begin{smallmatrix}B'\\
B\end{smallmatrix}\right)_{fg'}= \left(\begin{smallmatrix}X\\
	Y\end{smallmatrix}\right)_{v}$ evidently gives rise to $A=\im(fg')= \im(v)$. Adding that $A$ is indecomposable, it follows that $\im(v)$, and accordingly, $\left(\begin{smallmatrix}X\\
Y\end{smallmatrix}\right)_{v}$ is indecomposable.
\end{proof}

Let us point out that the above proposition provides a generalized version of \cite[Theorem 3.3]{E}.  As the projectivity of the first term $``A"$ is of particular interest to us, we record the following special case of Proposition \ref{Prop4.1}.

%\begin{lemma}\label{Lemma 3.2}(Compare with \cite[Lemma 1.3]{RS1})
%	Let $A$ be  an indecomposable projective module. Then $\text{rad}(A\st{1}\rt A)=(\text{rad}(A)\st{i}\rt A)$ and it is indecomposable.
%\end{lemma}
%\begin{proof}
%	Since the quotient of the indecomposable projective module  $(A\st{1}\rt A)$ by $(\text{rad}(A)\st{i}\rt A)$ is the simple module $(A/\text{rad}(A)\rt 0)$, so we have the first claim.  To prove the second claim, it is known to show that the endomorphism algebra of $(A\st{1}\rt A)$ is local. Being local is equivalent to show that  an endomorphism $\left(\begin{smallmatrix} \phi_1 \\ \phi_2  \end{smallmatrix}\right):\left(\begin{smallmatrix} \text{rad}(A) \\ A \end{smallmatrix}\right)_i\rt \left(\begin{smallmatrix} \text{rad}(A)\\ A\end{smallmatrix}\right)_i$ is either nilpotent or invertible. Since $A$ is an indecomposable module, using the same known fact implies that $\phi_2$ is either nilpotent or invertible. If $\phi_2$ is an isomorphism, then by using this fact that $\text{rad}(A)$ is a maximal submodule of $A$, one can see that the restricted morphism $\phi_2=\phi_1\mid_{\text{rad}(A)}$ is an isomorphism as well. Thus the endomorphism $\left(\begin{smallmatrix} \phi_1 \\ \phi_2  \end{smallmatrix}\right)$ is an isomorphism, so we are done this case. If $\phi_2$ is nilpotent, i.e., there is an integer $\phi^n_2=0$, then $\left(\begin{smallmatrix} \phi^n_1 \\ \phi^n_2  \end{smallmatrix}\right)=\left(\begin{smallmatrix} 0 \\ 0 \end{smallmatrix}\right)$. Therefore, the proof is completed.
%\end{proof}
%Now we prove the promised result.

\begin{proposition}\label{Prop 4.3}
	Let $A$ be a projective $\Lambda$-module and $\delta: \  0 \rt A \st{f}\rt B \st{g}\rt C \rt 0$ be an almost split sequence in $\mmod \La$. Then
	$$\xymatrix@1{0\ar[r] & {\left(\begin{smallmatrix} \text{rad}(A)\\ A\end{smallmatrix}\right)}_{i}
		\ar[rr]^-{\left(\begin{smallmatrix}\left[\begin{smallmatrix} i\\1
			\end{smallmatrix}\right]\\\left[\begin{smallmatrix} 1\\ f
			\end{smallmatrix}\right]
			\end{smallmatrix}\right)}
		& & {\left(\begin{smallmatrix}A\\ A\end{smallmatrix}\right)}_{1}\oplus{\left(\begin{smallmatrix}\text{rad}(A)\\
			B\end{smallmatrix}\right)}_{fi}\ar[rr]^-{\left(\begin{smallmatrix}\left[\begin{smallmatrix} -1 & i
			\end{smallmatrix}\right]\\\left[\begin{smallmatrix} -f &  1
			\end{smallmatrix}\right]
			\end{smallmatrix}\right)}& &
		{\left(\begin{smallmatrix}A\\B\end{smallmatrix}\right)}_{f}\ar[r]& 0 },$$
	is an almost split sequence in $\mathcal{H}$.
\end{proposition}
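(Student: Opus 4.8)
The strategy is to run the proof of Proposition~\ref{Prop4.1} with the almost split sequence $\delta'$ ending at $A$ replaced by the radical inclusion $i\colon\rad(A)\hookrightarrow A$, which is the minimal right almost split morphism to the projective module $A$; correspondingly, the almost split sequence ending at $(A\st{1}\rt A)$ used there — which no longer exists, since $A$ projective makes $(A\st{1}\rt A)$ a projective object of $\mathcal{H}$ — gets replaced by the inclusion $\binom{\rad(A)}{A}_i\hookrightarrow\binom{A}{A}_1$ of the radical of that projective object.

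First I would collect the easy structural facts. As the first term of an almost split sequence, $A$ is indecomposable, hence a local module with $\End_\Lambda(A)$ local, and $f$ is left minimal and not an isomorphism. Exactly as in Proposition~\ref{Prop4.1} (using left minimality of $f$ together with locality of $\End_\Lambda(A)$), the object $\binom{A}{B}_f$ is indecomposable, and since $f$ is not an isomorphism and $A\neq 0$ it is not among the two forms $(P\st{1}\rt P)$, $(0\rt P)$ of indecomposable projectives of $\mathcal{H}$, so it is non-projective. Next, every $\phi\in\End_\Lambda(A)$ satisfies $\phi(\rad(A))\subseteq\rad(A)$, so restriction yields a ring isomorphism $\End_{\mathcal{H}}\binom{\rad(A)}{A}_i\simeq\End_\Lambda(A)$; hence $\binom{\rad(A)}{A}_i$ is indecomposable with local endomorphism ring. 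Finally, inspecting the subobjects of $(A\st{1}\rt A)$ — these being the pairs $A'\subseteq A''\subseteq A$ with the object $(A'\st{\inc}\rt A'')$ — one sees that the only one with simple quotient is $\binom{\rad(A)}{A}_i$; therefore $\rad\binom{A}{A}_1=\binom{\rad(A)}{A}_i$.

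The crux is the identification $\tau_{\mathcal{H}}\binom{A}{B}_f\simeq\binom{\rad(A)}{A}_i$. Apply Lemma~\ref{Lemma 5.2}(1) to $\delta$: since $C$ is non-projective (being the last term of an almost split sequence), $(0\rt C)$ is indecomposable non-projective and the almost split sequence ending at it reads $0\rt\binom{A}{A}_1\rt\binom{A}{B}_f\rt(0\rt C)\rt 0$, so there is an irreducible morphism $\binom{A}{A}_1\rt\binom{A}{B}_f$. As $A$ is projective, $\binom{A}{A}_1$ is an indecomposable projective object of $\mathcal{H}$, so by \cite[\S VII, Proposition~1.5]{AuslanreitenSmalo} there is an irreducible morphism $\tau_{\mathcal{H}}\binom{A}{B}_f\rt\binom{A}{A}_1$; but any object admitting an irreducible morphism to an indecomposable projective is a direct summand of its radical, so $\tau_{\mathcal{H}}\binom{A}{B}_f$ is a summand of $\rad\binom{A}{A}_1=\binom{\rad(A)}{A}_i$, hence equals it by indecomposability. (Unlike in Proposition~\ref{Prop4.1}, this translate may itself be a projective object of $\mathcal{H}$ — for instance $(0\rt A)$ when $A$ is simple — which is harmless, a projective object being perfectly allowed as an Auslander--Reiten translate.)

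It remains to recognise the displayed sequence as the almost split sequence ending at $\binom{A}{B}_f$, and here I would follow Proposition~\ref{Prop4.1} verbatim. It is a short exact sequence in $\mathcal{H}$: its two rows $0\rt\rad(A)\rt A\oplus\rad(A)\rt A\rt 0$ and $0\rt A\rt A\oplus B\rt B\rt 0$ are (split) exact in $\mmod\Lambda$ and the displayed matrices are readily checked to commute with the structure maps. It does not split: otherwise the indecomposable $\binom{A}{B}_f$ would be a summand of $\binom{A}{A}_1\oplus\binom{\rad(A)}{B}_{fi}$, yet it is not isomorphic to $\binom{A}{A}_1$ because $f$ is not an isomorphism, and it is not a summand of $\binom{\rad(A)}{B}_{fi}$ because that would force $A$ to be a summand of $\rad(A)$. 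Thus, by \cite[\S V, Proposition~2.2]{AuslanreitenSmalo}, it suffices to show that every non-isomorphism $\left(\begin{smallmatrix}\phi_1\\\phi_2\end{smallmatrix}\right)\colon\binom{\rad(A)}{A}_i\rt\binom{\rad(A)}{A}_i$ factors through the left-hand morphism $\left(\begin{smallmatrix}\left[\begin{smallmatrix}i\\1\end{smallmatrix}\right]\\ \left[\begin{smallmatrix}1\\f\end{smallmatrix}\right]\end{smallmatrix}\right)$. Under $\End_{\mathcal{H}}\binom{\rad(A)}{A}_i\simeq\End_\Lambda(A)$ such a morphism corresponds to a non-isomorphism $\phi_2$ of the local module $A$, whence $\phi_2(A)\subseteq\rad(A)$; writing $\phi_2=i\bar\phi_2$ with $\bar\phi_2\colon A\rt\rad(A)$ and noting that $\phi_1=\bar\phi_2|_{\rad(A)}$, one checks that $\left(\begin{smallmatrix}\left[\begin{smallmatrix}\bar\phi_2&0\end{smallmatrix}\right]\\ \left[\begin{smallmatrix}\phi_2&0\end{smallmatrix}\right]\end{smallmatrix}\right)\colon\binom{A}{A}_1\oplus\binom{\rad(A)}{B}_{fi}\rt\binom{\rad(A)}{A}_i$ is a morphism in $\mathcal{H}$ and composes with the left-hand morphism to give $\left(\begin{smallmatrix}\phi_1\\\phi_2\end{smallmatrix}\right)$. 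The one genuinely new point compared with Proposition~\ref{Prop4.1} is the translate computation in the previous paragraph: the now-projective object $(A\st{1}\rt A)$ forces the knitting to pass through its radical rather than through an almost split sequence, and one must accept that $\tau_{\mathcal{H}}\binom{A}{B}_f$ can be a projective object; everything else is the same routine verification.
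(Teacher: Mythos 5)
Your argument is correct and follows essentially the same route as the paper's proof: both identify $\tau_{\mathcal{H}}(A\st{f}\rt B)$ with $(\mathrm{rad}(A)\st{i}\rt A)$ by combining the almost split sequence from Lemma \ref{Lemma 5.2} with \cite[\S VII, Proposition 1.5]{AuslanreitenSmalo} and the fact that the radical inclusion is the minimal right almost split map into the projective object $(A\st{1}\rt A)$, and then run the factorization argument of Proposition \ref{Prop4.1}. The only (harmless) differences are that you compute $\mathrm{rad}(A\st{1}\rt A)$ directly instead of citing \cite[Lemma 1.3]{RS1}, and that you spell out the adaptation of the factorization step which the paper leaves as ``the rest goes as in Proposition \ref{Prop4.1}''.
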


\begin{proof}
By Lemma \ref{Lemma 5.2} we have the almost split sequence
		$$\xymatrix@1{ 0\ar[r] & {\left(\begin{smallmatrix} A\\ A\end{smallmatrix}\right)}_{1}
		\ar[rr]^-{\left(\begin{smallmatrix} 1 \\ f \end{smallmatrix}\right)}
		& & {\left(\begin{smallmatrix}A\\ B\end{smallmatrix}\right)}_{f}\ar[rr]^-{\left(\begin{smallmatrix} 0 \\ g\end{smallmatrix}\right)}& &
		{\left(\begin{smallmatrix}0 \\ C\end{smallmatrix}\right)}_{0}\ar[r]& 0 }   $$
in $\mathcal{H}$. Combined to \cite[Proposition VII.1.5]{AuslanreitenSmalo}, this leads to an irreducible map $\tau_{\mathcal{H}}{\left(\begin{smallmatrix} A\\ B\end{smallmatrix}\right)}_{f}\lrt \left(\begin{smallmatrix} A\\ A\end{smallmatrix}\right)_{1}$. On the other hand,  \cite[Lemma 1.3]{RS1} says that
$\xymatrix@1{   {\left(\begin{smallmatrix} \rm{ rad}(A)\\ A\end{smallmatrix}\right)}_{i}
			\ar[r]^-{\left(\begin{smallmatrix} i \\ 1\end{smallmatrix}\right)}
			&  {\left(\begin{smallmatrix}A\\ A\end{smallmatrix}\right)}_{1} }    $
is a minimal right almost split map in $\CH$. Hence $\tau_{\mathcal{H}}{\left(\begin{smallmatrix} A\\ B\end{smallmatrix}\right)}_{f}$ is a direct summand of ${\left(\begin{smallmatrix} \rm{ rad}(A)\\ A\end{smallmatrix}\right)}_{i}$.  But since $A$ is indecomposable, one deduces that ${\left(\begin{smallmatrix} \rm{ rad}(A)\\ A\end{smallmatrix}\right)}_{i}$ is also indecomposable. Therefore, we conclude that $\tau_{\mathcal{H}}{\left(\begin{smallmatrix} A\\ B\end{smallmatrix}\right)}_{f}\simeq \left(\begin{smallmatrix} \rm{rad}(A)\\ A\end{smallmatrix}\right)_{i}$. Now the rest of the proof goes ahead as in Proposition \ref{Prop4.1}.
\end{proof}

\remark  We would like to point out here that the sequence obtained in Proposition \ref{Prop 4.3} is also almost split in $\CS$ as $(\text{rad}(A)\st{i}\rt A)$ lies inside $\CS$.

\begin{proposition}\label{Prop 4.4}
	Assume $A$ is  an indecomposable non-projective $\La$-module and let $\eta: 0 \rt C \st{f}\rt B\st{g}\rt A\rt 0$ be an almost split sequence in $\mmod \La$. Let $P\st{p}\rt A$ and  $e:C\rt I$ be respectively the projective cover of $A$ and  the injective envelope  of $C$. Then the commutative diagram
        \[\xymatrix{0\ar[r]&C \ar[r]^{s} \ar@{=}[d] & Z\ar[r]^l \ar[d]^h 				 & P  \ar[r] \ar[d]^p & 0 \\0 \ar[r]&                     
        		C \ar[r]^{f}\ar[d]^e                
         & B \ar[r]^{g}\ar[d]^d       
        			 & A \ar[r]\ar@{=}[d]  & 0  \\0\ar[r] &I\ar[r]^u&X\ar[r]^v& A\ar[r]& 0
        }\]

with exact rows obtained by taking the pull-back and the push-out of $\eta$ along $p$ and $e$, induces the almost split sequence

 $$\xymatrix@1{\delta: \ \ 0\ar[r] & {\left(\begin{smallmatrix} C\\ I\end{smallmatrix}\right)}_{e}
	\ar[rr]^-{\left(\begin{smallmatrix} s \\ u\end{smallmatrix}\right)}
	& & {\left(\begin{smallmatrix}Z\\ X\end{smallmatrix}\right)}_{dh}\ar[rr]^-{\left(\begin{smallmatrix} l \\ v\end{smallmatrix}\right)}& &
	{\left(\begin{smallmatrix}P\\A\end{smallmatrix}\right)}_{p}\ar[r]& 0 }$$
in $\mathcal{H}$ ending at $\mathbb{X}=(P\st{p}\rt A)$.

\end{proposition}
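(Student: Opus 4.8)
The plan is to mirror the proof of Proposition~\ref{Prop4.1}: first identify $\tau_{\mathcal{H}}(\mathbb{X})$ with the object $(C\st{e}\rt I)$, then check that the sequence $\delta$ of the statement is a non-split short exact sequence in $\mathcal{H}$ ending at $\mathbb{X}$, and finally verify the factorization property which, via \cite[\S V, Proposition~2.2]{AuslanreitenSmalo}, upgrades $\delta$ to an almost split sequence. I would open with the routine observations: since $p$ is a projective cover of the indecomposable non-projective module $A$, the module $P$ is indecomposable, and an argument with idempotent endomorphisms (using that $\End(P)$, $\End(A)$ and $\End(C)$ are local) shows that $\mathbb{X}=(P\st{p}\rt A)$ and $(C\st{e}\rt I)$ are indecomposable; moreover $\mathbb{X}$ is not projective because $A$ is not. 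Hence an almost split sequence ending at $\mathbb{X}$ exists.

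To determine the translate I would apply Construction~\ref{projective Cover}. As $\Coker(p)=0$, it shows that $(1_P,p):(P\st{1}\rt P)\rt(P\st{p}\rt A)$ is the projective cover of $\mathbb{X}$ in $\mathcal{H}$, with kernel $(0\rt\Omega(A))$; running Construction~\ref{projective Cover} once more for this kernel produces the minimal projective presentation of $\mathbb{X}$, which takes the form $(0\rt P_1)\rt(P\st{1}\rt P)\rt\mathbb{X}\rt 0$ with connecting map $\left(\begin{smallmatrix}0\\\alpha\end{smallmatrix}\right)$, where $P_1\st{\alpha}\rt P\st{p}\rt A\rt 0$ is the minimal projective presentation of $A$. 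Dualizing with $(-)^*_{\mathcal{H}}$ and then $D_{\mathcal{H}}$, exactly as in the discussion preceding Proposition~\ref{Prop 3.4}, a componentwise cokernel computation gives ${\rm Tr}_{\mathcal{H}}(\mathbb{X})\simeq(P_1^*\twoheadrightarrow{\rm Tr}_{\La}(A))$, the projective cover of ${\rm Tr}_{\La}(A)$ over $\La^{\op}$. Applying $D_{\mathcal{H}}$ and using that the standard duality sends a projective cover to an injective envelope, together with $D{\rm Tr}_{\La}(A)=\tau_{\La}(A)\simeq C$ and $D(P_1^*)=\nu(P_1)$, I would conclude $\tau_{\mathcal{H}}(\mathbb{X})=D_{\mathcal{H}}{\rm Tr}_{\mathcal{H}}(\mathbb{X})\simeq(C\st{e}\rt I)$.

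Next I would confirm that $\delta$ is a short exact sequence in $\mathcal{H}$ ending at $\mathbb{X}$: its two rows are short exact, being respectively the pull-back and the push-out of $\eta$, and the squares defining $\left(\begin{smallmatrix}s\\u\end{smallmatrix}\right)$ and $\left(\begin{smallmatrix}l\\v\end{smallmatrix}\right)$ commute by the very construction of $s,h,l,d,u,v$. For non-splitness the key point is that, since $e$ is a monomorphism, the push-out map $d:B\rt X$ is a monomorphism. If $\delta$ split, a retraction $(r_1,r_2)$ of $\left(\begin{smallmatrix}l\\v\end{smallmatrix}\right)$ would yield $\mu:=hr_1:P\rt B$ satisfying $g\mu=p$ (the pull-back condition forced by $lr_1=1_P$) and $d\mu=r_2p$ (the morphism condition in $\mathcal{H}$); restricting $d\mu=r_2p$ to $\Ker(p)=\Omega(A)$ and using that $d$ is monic would force $\mu$ to factor through $p$, whence $g$ would be a split epimorphism, contradicting that $\eta$ is almost split.

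The last step --- which I expect to be the main obstacle --- is the factorization criterion. Granting $\tau_{\mathcal{H}}(\mathbb{X})\simeq(C\st{e}\rt I)$ and the indecomposability of both end terms, \cite[\S V, Proposition~2.2]{AuslanreitenSmalo} (invoked as in the proof of Proposition~\ref{Prop4.1}) reduces the claim to showing that every non-isomorphism $\left(\begin{smallmatrix}\phi_1\\\phi_2\end{smallmatrix}\right)$ of $(C\st{e}\rt I)$ factors through $\left(\begin{smallmatrix}s\\u\end{smallmatrix}\right)$. Here $\phi_1\in\rad\End(C)$ is not a split monomorphism, so $\phi_1=\rho f$ for some $\rho:B\rt C$ because $f$ is left almost split; then $\rho h:Z\rt C$ will be the first component of the factoring morphism, and its second component $\xi_2:X\rt I$ is produced by the push-out property of $X$ applied to the pair $(\phi_2,\rho e)$, which agrees on $C$ since $\phi_2e=e\phi_1=e\rho f$. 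The care required is in arranging $\xi_2$ so that $(\rho h,\xi_2)$ is genuinely a morphism in $\mathcal{H}$ and composes with $\left(\begin{smallmatrix}s\\u\end{smallmatrix}\right)$ to $\left(\begin{smallmatrix}\phi_1\\\phi_2\end{smallmatrix}\right)$; the parallel check, factoring non-isomorphisms of $\mathbb{X}$ through $\left(\begin{smallmatrix}l\\v\end{smallmatrix}\right)$ via the pull-back, works equally well, and one of the two suffices. This done, $\delta$ is almost split.
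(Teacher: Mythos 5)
Your proposal is correct in substance but takes a genuinely different route from the paper's at the decisive step. The paper never computes $\tau_{\mathcal{H}}(P\st{p}\rt A)$ inside this proof: after establishing non-splitness (essentially as you do, by showing a retraction of $\left(\begin{smallmatrix}l\\v\end{smallmatrix}\right)$ would force $g$ to split), it verifies directly that every non-section from $(C\st{e}\rt I)$ to an \emph{arbitrary} object $(V\st{t}\rt W)$ factors through $\left(\begin{smallmatrix}s\\u\end{smallmatrix}\right)$; the key device there is a mono-epi factorization of $t$ that moves the problem into the submodule category $\mathcal{S}$, where the lower two rows of the diagram form an almost split sequence ending at $(A\st{1}\rt A)$ by Lemma 6.3 of \cite{H}. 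You instead identify $\tau_{\mathcal{H}}(P\st{p}\rt A)\simeq(C\st{e}\rt I)$ from Construction \ref{projective Cover} (your minimal projective presentation $(0\rt P_1)\rt(P\st{1}\rt P)\rt\mathbb{X}\rt 0$ and the resulting transpose are correct) and then, by the endomorphism criterion of \cite[\S V, Proposition 2.2]{AuslanreitenSmalo}, only need to factor non-automorphisms of $(C\st{e}\rt I)$ through $\left(\begin{smallmatrix}s\\u\end{smallmatrix}\right)$, which you do cleanly from the left-almost-split property of $f$ and the universal property of the push-out $X$, with no appeal to $\mathcal{S}$ or to \cite{H}. Your route makes the translate explicit (the paper only records this identification afterwards, in Corollary \ref{Prop 5.4}(a), as a consequence of the proposition), while the paper's route avoids the transpose computation and proves the stronger ``left almost split against all targets'' statement directly. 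Two small repairs to your write-up: $P$ need not be indecomposable (the projective cover of an indecomposable module can decompose), but the idempotent argument you gesture at still gives indecomposability of $(P\st{p}\rt A)$, since an idempotent endomorphism $(\psi_1,\psi_2)$ has $\psi_2\in\{0,1\}$ by locality of $\End(A)$, and essentiality of $p$ then forces $\psi_1=1$ or $\psi_1=0$ respectively; and your claim that $\phi_1$ is a non-isomorphism whenever $\left(\begin{smallmatrix}\phi_1\\\phi_2\end{smallmatrix}\right)$ is one needs the remark that if $\phi_1$ were invertible then $\phi_2e=e\phi_1$ would be injective on the essential submodule $e(C)$ of $I$, making $\phi_2$, and hence the whole endomorphism, an isomorphism.
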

\begin{proof}
First, we prove that $\delta$ does not split. For, suppose to the contrary that there exists $\left(\begin{smallmatrix}r_1\\ r_2\end{smallmatrix}\right):\left(\begin{smallmatrix}P\\ A\end{smallmatrix}\right)_p\rt \left(\begin{smallmatrix}Z\\ X\end{smallmatrix}\right)_{dh}$ such that $\left(\begin{smallmatrix}l\\ v\end{smallmatrix}\right)\left(\begin{smallmatrix}r_1\\ r_2\end{smallmatrix}\right)=\left(\begin{smallmatrix}1\\ 1\end{smallmatrix}\right).$
Let $q:X\rt \text{Coker}(d)$ be the canonical quotient map. We argue that $qr_2=0$. Indeed, for $a \in A$, there is $a'\in P$ with $p(a')=a$; the commutativity condition imposed by  the map $\left(\begin{smallmatrix}r_1\\ r_2\end{smallmatrix}\right)$  then implies $dhr_1(a')=r_2p(a')=r_2(a)$, and the claim follows. Hence there exists $w:A\rt B$ with $dw=r_2$.  But then $gw=vdw=vr_2=1_A$, meaning that $\eta$ splits. This contradiction reveals that $\delta$ does not split.

\vspace{.1 cm}

One observes easily that the end terms of $\delta$ are indecomposable. Let now $\left(\begin{smallmatrix}
\phi_1\\\phi_2
\end{smallmatrix}\right):\left(\begin{smallmatrix}
C\\ I
\end{smallmatrix}\right)_e\rt \left(\begin{smallmatrix}
V\\W
\end{smallmatrix} \right)_t$ be a non-section. It induces the map
	
$$\left(\begin{smallmatrix}
j\phi_1\\\phi_2
\end{smallmatrix}\right):\left(\begin{smallmatrix}
C\\ I
\end{smallmatrix}\right)_e\rt \left(\begin{smallmatrix}
\rm{Im}(t)\\W
\end{smallmatrix} \right)_i,$$
in $\CS(\La)$, where $V\st{j}\rt \text{Im}(t) \st{i}\rt W $ is a mono-epi factorization of $t$. This is plainly again a non-section.
The lower half of the diagram yields the sequence
	$$\xymatrix@1{0\ar[r] & {\left(\begin{smallmatrix} C\\ I\end{smallmatrix}\right)}_{e}
	\ar[rr]^-{\left(\begin{smallmatrix} f \\ u\end{smallmatrix}\right)}
	& & {\left(\begin{smallmatrix}B\\ X\end{smallmatrix}\right)}_{d}\ar[rr]^-{\left(\begin{smallmatrix} g \\ v\end{smallmatrix}\right)}& &
	{\left(\begin{smallmatrix}A\\A\end{smallmatrix}\right)}_{1}\ar[r]& 0 }$$
which is almost split in $\CS$ by Lemma 6.3 of \cite{H}.
Therefore,   $\left(\begin{smallmatrix}
	j\phi_1\\\phi_2
\end{smallmatrix}\right)$ factors over $\left(\begin{smallmatrix}
f\\u
\end{smallmatrix}\right)$ via, say, $\left(\begin{smallmatrix}
\alpha_1\\\alpha_2
\end{smallmatrix}\right):\left(\begin{smallmatrix}
B\\ X
\end{smallmatrix}\right)_d\rt \left(\begin{smallmatrix}
\rm{Im}(t)\\W
\end{smallmatrix} \right)_i$. In particular, $\phi_2=\alpha_2 u$. Since $P$ is projective, there should exist on the one hand $\Lambda$-maps $n:Z\rt C$ and $l':P\rt Z$ with $ns=1_C$ and $ll'=1_P$ and, on the other hand, $q:P\rt V$ with $\alpha_1 h l'=j q$. Adding that $i \alpha_1=\alpha_2 d$,  it is straightforward to deduce that   $\left(\begin{smallmatrix}
\phi_1 n+q l\\\alpha_2
\end{smallmatrix}\right):\left(\begin{smallmatrix}
Z\\ X
\end{smallmatrix}\right)_{dh}\rt \left(\begin{smallmatrix}
V\\W
\end{smallmatrix} \right)_t$ is a morphism in $\mathcal{H}  $ which factors $\left(\begin{smallmatrix}
\phi_1\\\phi_2
\end{smallmatrix}\right)$ through $\left(\begin{smallmatrix}
s\\u
\end{smallmatrix}\right)$.
\end{proof}

We now draw attention to the almost split sequence that starts from an object in $\mathcal{H}$ of the form $(0\rt P)$, where $P$ is an indecomposable projective-injective $\Lambda$-module.

\begin{proposition}\label{Prop 4.5}
	 Let $P$ be an indecomposable  projective-injective $\Lambda$-module. Assume $p:Q\rt \text{soc}(P)$ is the projective cover of the socle $\text{soc}(P)$ of $P$. Then
		$$\xymatrix@1{ \eta: \ \ 0\ar[r] & {\left(\begin{smallmatrix} 0\\ P\end{smallmatrix}\right)}_{0}
			\ar[rr]^-{\left(\begin{smallmatrix} 0\\ 1\end{smallmatrix}\right)}
			& & {\left(\begin{smallmatrix}Q\\ P \end{smallmatrix}\right)}_{ip}\ar[rr]^-{\left(\begin{smallmatrix} 1 \\ 0\end{smallmatrix}\right)}& &
			{\left(\begin{smallmatrix}Q\\0\end{smallmatrix}\right)}_{0}\ar[r]& 0 }$$
is an almost split sequence in $\mathcal{H}$, where  $i:\text{soc}(P)\rt P$ is the inclusion.
\end{proposition}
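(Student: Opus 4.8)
The plan is to follow the same route as in Propositions~\ref{Prop4.1} and~\ref{Prop 4.3}: first pin down $\tau_{\mathcal{H}}(Q\rt 0)$, and then verify that $\eta$ is the almost split sequence ending at $(Q\rt 0)$ by checking that it is non‑split and that every non‑isomorphic endomorphism of its left‑hand term factors through the left‑hand map $\binom{0}{1}$.

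I would begin with the elementary structural remarks. Since $P$ is an indecomposable injective $\Lambda$-module, $\text{soc}(P)$ is simple, so its projective cover $Q$ is the indecomposable projective module $P(\text{soc}(P))$; in particular $Q$ is local with $\Ker(p)=\text{rad}(Q)$, both end terms $(0\rt P)$ and $(Q\rt 0)$ of $\eta$ are indecomposable, and $ip\colon Q\rt P$ is nonzero (it surjects onto $\text{soc}(P)$). Exactness of $\eta$ is immediate because it is tested componentwise, the two rows being $0\rt 0\rt Q\st{1}\rt Q\rt 0$ and $0\rt P\st{1}\rt P\rt 0\rt 0$. And $\eta$ does not split: a retraction of $\binom{1}{0}$ would amount to a $\Lambda$-map $a\colon Q\rt Q$ with $a=1_Q$ and $ipa=0$, forcing $ip=0$, which is absurd.

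Next I would compute $\tau_{\mathcal{H}}(Q\rt 0)$ via Proposition~\ref{Prop 3.4}. As $Q$ is projective, its minimal projective presentation is simply $0\rt Q\st{1}\rt Q\rt 0$, so Proposition~\ref{Prop 3.4} yields $\tau_{\mathcal{H}}(Q\rt 0)\simeq (0\rt \nu Q)$. Now the Nakayama functor sends the indecomposable projective $Q=P(\text{soc}(P))$ to the indecomposable injective with socle $\text{soc}(P)$; since $P$ is itself the indecomposable injective with socle $\text{soc}(P)$, we get $\nu Q\simeq P$, hence $\tau_{\mathcal{H}}(Q\rt 0)\simeq (0\rt P)$. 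Thus the almost split sequence terminating at $(Q\rt 0)$ does start at $(0\rt P)$, and by \cite[\S V, Proposition 2.2]{AuslanreitenSmalo} it now suffices, exactly as in the proof of Proposition~\ref{Prop4.1}, to show that every non-isomorphism $\binom{\phi_1}{\phi_2}\colon (0\rt P)\rt (0\rt P)$ factors through $\binom{0}{1}$. Here $\phi_1=0$ and $\phi_2\colon P\rt P$ is a non-isomorphism; as $P$ is indecomposable $\phi_2$ is not injective, and since $\text{soc}(P)$ is simple and essential in $P$ this forces $\phi_2$ to vanish on $\text{soc}(P)$, i.e. $\phi_2\, ip=0$. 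Consequently $\binom{0}{\phi_2}\colon (Q\st{ip}\rt P)\rt (0\rt P)$ is a morphism in $\mathcal{H}$, and $\binom{\phi_1}{\phi_2}=\binom{0}{\phi_2}\circ\binom{0}{1}$ gives the desired factorization.

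The one point that really needs care is the identification $\nu Q\simeq P$: one has to observe that the (automatically simple) socle of an indecomposable injective module $P$ has projective cover $P(\text{soc}(P))$, whose Nakayama twist is precisely the indecomposable injective with socle $\text{soc}(P)$, namely $P$ itself. After that, the remaining steps are the componentwise bookkeeping of morphisms in $\mathcal{H}$ together with the verification scheme for almost split sequences already used in Propositions~\ref{Prop4.1} and~\ref{Prop 4.3}, which is here particularly short because the endomorphisms of $(0\rt P)$ are so transparent.
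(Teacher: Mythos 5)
Your proof is correct, but it runs in the opposite direction to the paper's. The paper starts from the abstract almost split sequence $\epsilon$ beginning at the indecomposable non-injective object $(0\rt P)$, computes $\tau^{-1}_{\mathcal{H}}(0\rt P)\simeq(\nu^{-1}P\rt 0)$ from the minimal projective presentation of $D_{\mathcal{H}}(0\rt P)$, and then identifies the middle term of $\epsilon$ with $(Q\st{ip}\rt P)$; the delicate step there is pinning down the map $f$ in the middle term by factoring $(\text{rad}(\nu^{-1}P)\rt 0)\rt(\nu^{-1}P\rt 0)$ through $\epsilon$ and invoking simplicity of $\text{soc}(P)$. You instead exhibit $\eta$ explicitly, check exactness and non-splitness, compute $\tau_{\mathcal{H}}(Q\rt 0)\simeq(0\rt\nu Q)\simeq(0\rt P)$ from Proposition \ref{Prop 3.4} together with $\nu$ of a projective cover of a simple being the injective envelope of that simple, and then verify the criterion of \cite[\S V, Proposition 2.2]{AuslanreitenSmalo} directly: your observation that a non-isomorphism $\phi_2:P\rt P$ has nonzero kernel and hence annihilates the simple essential socle, so that $\phi_2\, ip=0$ and $\left(\begin{smallmatrix} 0\\ \phi_2\end{smallmatrix}\right)$ already lifts along $\left(\begin{smallmatrix} 0\\ 1\end{smallmatrix}\right)$, plays exactly the role that the identification of $f$ plays in the paper. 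Both arguments are sound; yours is shorter because the endomorphisms of $(0\rt P)$ are completely transparent, and it mirrors the verification scheme the paper itself uses for Propositions \ref{Prop4.1} and \ref{Prop 4.3}, whereas the paper's version has the mild advantage of not presupposing the shape of the sequence. Two small remarks: the splitting datum you rule out is a section of $\left(\begin{smallmatrix} 1\\ 0\end{smallmatrix}\right)$ rather than a retraction (the computation you perform is the right one); and your argument, like the paper's, really only uses injectivity and indecomposability of $P$ --- the projectivity hypothesis is carried along for the later applications in the self-injective setting.
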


\begin{proof}
Since $\left(\begin{smallmatrix}0\\ P \end{smallmatrix}\right)_0$ is indecomposable non-injective, there exists an almost split sequence

	$$\xymatrix@1{ \epsilon: \ \ 0\ar[r] & {\left(\begin{smallmatrix} 0\\ P\end{smallmatrix}\right)}_{0}
	\ar[rr]^-{\left(\begin{smallmatrix} 0\\ g\end{smallmatrix}\right)}
	& & {\left(\begin{smallmatrix}X\\ Y \end{smallmatrix}\right)}_{f}\ar[rr]^-{\left(\begin{smallmatrix} r \\ s\end{smallmatrix}\right)}& &
	{\left(\begin{smallmatrix}V\\W\end{smallmatrix}\right)}_{d}\ar[r]& 0 }$$
in $\mathcal{H}$. We will show that the above sequence is isomorphic to $\eta$, whence the result. Note at the first pace that ${\left(\begin{smallmatrix}V\\W\end{smallmatrix}\right)}_{d}\simeq \tau^{-1}_{\mathcal{H}}{\left(\begin{smallmatrix} 0\\ P\end{smallmatrix}\right)}$. However, Construction \ref{projective Cover} besides projective-injectivity of $P$ yields that

$$\xymatrix@1{ \ \ 0\ar[r] & {\left(\begin{smallmatrix} 0\\ D(P)\end{smallmatrix}\right)}
			\ar[rr]^-{\left(\begin{smallmatrix} 0\\ 1\end{smallmatrix}\right)}
			& & {\left(\begin{smallmatrix}D(P)\\ D(P) \end{smallmatrix}\right)}_{1}\ar[rr]^-{\left(\begin{smallmatrix} 1 \\ 0\end{smallmatrix}\right)}& &
			{\left(\begin{smallmatrix}D(P)\\0\end{smallmatrix}\right)}\ar[r]& 0 }$$
is the minimal projective presentation of $D_{\mathcal{H}}\left(\begin{smallmatrix}0\\P \end{smallmatrix}\right)=\left(\begin{smallmatrix}D(P)\\ 0 \end{smallmatrix}\right)$. Hence an easy computation reveals that ${\left(\begin{smallmatrix}V\\W\end{smallmatrix}\right)}_{d}\simeq\tau^{-1}_{\mathcal{H}}{\left(\begin{smallmatrix} 0\\ P\end{smallmatrix}\right)}={\left(\begin{smallmatrix} \nu^{-1} P\\0\end{smallmatrix}\right)}$. In particular $W=0$ and $V\simeq\nu^{-1} P$ is projective indecomposable.
Further, since then $s=0$ and $r$ is an isomorphism, we may assume that $\epsilon$ has the following form
	$$\xymatrix@1{ \epsilon: \ \ 0\ar[r] & {\left(\begin{smallmatrix} 0\\ P\end{smallmatrix}\right)}
	\ar[rr]^-{\left(\begin{smallmatrix} 0\\ 1\end{smallmatrix}\right)}
	& & {\left(\begin{smallmatrix}\nu^{-1} P\\ P \end{smallmatrix}\right)}_{f}\ar[rr]^-{\left(\begin{smallmatrix} 1 \\ 0\end{smallmatrix}\right)}& &
	{\left(\begin{smallmatrix}\nu^{-1} P\\0\end{smallmatrix}\right)}_{0}\ar[r]& 0 }.$$

It remains to show that $Q\simeq \nu^{-1} P$. Clearly $f\neq 0$ as $\epsilon$ does not split. Denote by $j$ the inclusion $\text{rad}(\nu^{-1} P)\rt \nu^{-1} P$. Then the map  $\left(\begin{smallmatrix}
j\\ 0
\end{smallmatrix}\right):\left(\begin{smallmatrix}
\text{rad}(\nu^{-1} P)\\ 0
\end{smallmatrix}\right)\rt \left(\begin{smallmatrix}
\nu^{-1} P\\0
\end{smallmatrix} \right)$ factors through the epimorphism $\left(\begin{smallmatrix} 1 \\ 0\end{smallmatrix}\right)$ sitting in $\epsilon$ as it is obviously a non-retraction. Hence there is a morphism $\left(\begin{smallmatrix}
j\\ 0
\end{smallmatrix}\right):\left(\begin{smallmatrix}
\text{rad}(\nu^{-1} P)\\ 0
\end{smallmatrix}\right)\rt  \left(\begin{smallmatrix}
\nu^{-1} P\\P
\end{smallmatrix} \right)_f$. The upshot is that $fj=0$, and then there exists $x:\nu^{-1} P/\text{rad}(\nu^{-1} P)\rt P$ commuting the diagram
	$$\xymatrix{		
	\text{rad}(\nu^{-1} P) \ar[r]^{j} & \nu^{-1} P
	\ar[d]^f \ar[r]^<<<<<<{q} & \nu^{-1} P/\text{rad}(\nu^{-1} P) \ar[dl]^x  \\
	 & P
	 &   }
$$
where $q$ is the canonical quotient map. Since the socle $\text{soc}(P)$ of $P$ is simple and $f\neq 0$, the induced map $x:\nu^{-1} P/\text{rad}(\nu^{-1} P)\rt \text{soc}(P)$ is certainly an isomorphism that ascends to an isomorphism between projective covers; namely $\nu^{-1} P\simeq Q$ and the result follows.
\end{proof}

At this point, we want to summon some immediate consequences of the results so far obtained in Sections $2$ and $3$. Let us denote by $[(A\st{f}\rt B)]$ the iso-class of the object $(A\st{f}\rt B)$ in $\mathcal{H}$.

\begin{corollary}\label{Prop 5.4}
The functors $\tau_{\CH}$ and $\tau_{\CH}^{-1}$ induce mutually inverse equivalences		
\begin{itemize}
\item[$(a)$]$\mathcal{A}\approx\mathcal{B}$, where $$\mathcal{A}=\{[(P\st{f}\rt A)]\mid f \text{ is the projective cover of some indecomposable non-projective}\ A\in\mmod\La\},$$ $$\mathcal{B}=\{[(B\st{e}\rt I)]\mid  e \text{ is the injective envelope of some indecomposable non-injective}\ B\in\mmod\La  \}$$

\item[$(b)$]$\mathcal{C}\approx\mathcal{D}$, where $$\mathcal{C}=\{[(A\st{f}\rt B)]\mid \ f \text{ is a minimal left almost split map in}  \ \mmod\La, \ A \ \text{ non-injective}  \},$$ $$\mathcal{D}=\{[(C\st{g}\rt D)]\mid \ g \text{ is a minimal right almost split map in} \ \mmod \La, \ D \ \text{non-projective}\}$$

\item[$(c)$]and, $\mathcal{E}\approx\mathcal{F}$, where $$\mathcal{E}=\{[\mathbb{E}]\mid  \mathbb{E}=(P\st{f}\rt Q)  \ \text{is  indecomposable  non-projective in }\mathcal{H} \ \text{and} \ P, Q \ \text{ projective}\},$$   $$\mathcal{F}=\{[M] \mid  \ M \ \text{ is indecomposable non-projective in } \  \mmod \La \}.$$	
\end{itemize}		
\end{corollary}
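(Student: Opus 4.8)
The plan is to obtain (a), (b), (c) by invoking, respectively, the Auslander--Reiten translate computations of Proposition \ref{Prop 4.4}, Propositions \ref{Prop4.1}--\ref{Prop 4.3}, and Proposition \ref{Prop 3.6}, after the bookkeeping that makes $\tau_{\CH}$ and $\tau_{\CH}^{-1}$ honest, mutually inverse bijections on the classes in question. Before anything else I would record one uniform preliminary: every object occurring in $\mathcal{A},\mathcal{B},\mathcal{C},\mathcal{D},\mathcal{E}$ is an indecomposable object of $\mathcal{H}$ that is neither projective nor injective. Given the description of the indecomposable projectives and injectives of $\mathcal{H}$ as $(0\rt P)$, $(P\st{1}\rt P)$, $(I\rt 0)$, $(I\st{1}\rt I)$, this amounts to noting that $(P\st{p}\rt A)$, $(B\st{e}\rt I)$, $(A\st{f}\rt B)$ with $f$ minimal left almost split, and so on, are assembled from indecomposable modules along non-isomorphisms, so none has one of those four shapes; indecomposability follows from indecomposability of the underlying module together with left/right minimality of the structure map. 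Only after this are $\tau_{\CH}$, $\tau_{\CH}^{-1}$ defined and genuinely inverse on these classes.

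For (a): if $[(P\st{p}\rt A)]\in\mathcal{A}$, Proposition \ref{Prop 4.4} exhibits an almost split sequence in $\mathcal{H}$ ending at $(P\st{p}\rt A)$ whose first term is $(\tau A\st{e}\rt I)$, $e$ being the injective envelope of $\tau A$, so $\tau_{\CH}(P\st{p}\rt A)\simeq(\tau A\st{e}\rt I)$. Since $A\mapsto\tau A$ is a bijection from iso-classes of indecomposable non-projective modules onto iso-classes of indecomposable non-injective ones, $\tau_{\CH}$ carries $\mathcal{A}$ bijectively onto $\mathcal{B}$, and applying Proposition \ref{Prop 4.4} with $A$ replaced by $\tau^{-1}B$ shows $\tau_{\CH}^{-1}$ carries $\mathcal{B}$ back onto $\mathcal{A}$.

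For (b): given $[(A\st{f}\rt B)]\in\mathcal{C}$ with $f$ the minimal left almost split map out of the indecomposable non-injective module $A$, I would split into the cases $A$ non-projective and $A$ projective. In the first, Proposition \ref{Prop4.1} gives $\tau_{\CH}(A\st{f}\rt B)\simeq(B'\st{g'}\rt A)$, where $0\rt\tau A\rt B'\st{g'}\rt A\rt 0$ is the almost split sequence ending at $A$, so $g'$ is the minimal right almost split map onto the non-projective $A$; in the second, Proposition \ref{Prop 4.3} gives $\tau_{\CH}(A\st{f}\rt B)\simeq(\rad A\st{i}\rt A)$. In either case one identifies the output as a representative in $\mathcal{D}$, and the mirror argument on $\mathcal{D}$ for $\tau_{\CH}^{-1}$ gives the reverse inclusion. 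For (c): by Proposition \ref{Prop 3.6}, an object $\mathbb{E}=(P\st{f}\rt Q)\in\mathcal{E}$ has $M:=\Coker(f)$ indecomposable non-projective with minimal projective presentation $P\st{f}\rt Q\rt M\rt 0$ and $\tau_{\CH}(\mathbb{E})\simeq(0\rt\tau M)$; conversely, the minimal projective presentation $P_1\st{\alpha}\rt P_0\rt M\rt 0$ of an indecomposable non-projective $M$ produces an object $(P_1\st{\alpha}\rt P_0)$ of $\mathcal{E}$ with cokernel $M$. Uniqueness of minimal projective presentations makes $\mathbb{E}\mapsto\Coker(\mathbb{E})$ and $M\mapsto(P_1\st{\alpha}\rt P_0)$ mutually inverse, and together with the identification of $\tau_{\CH}(\mathbb{E})=(0\rt\tau M)$ with $M$ this realizes $\mathcal{E}\approx\mathcal{F}$ through $\tau_{\CH}$ and $\tau_{\CH}^{-1}$.

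The delicate part — beyond the computations themselves, which are already done — is the matching of the degenerate boundary objects and the check that the two directions really are inverse: for (a), that $\tau A$ is never injective (it is not, as $A$ is non-projective, so $(\tau A\st{e}\rt I)$ genuinely lies in $\mathcal{B}$); for (b), keeping track of how the objects of $\mathcal{C}$ with projective source correspond under $\tau_{\CH}$ to the objects of $\mathcal{D}$ with injective target; and throughout, confirming that the objects produced remain non-projective and non-injective in $\mathcal{H}$, which is exactly what legitimizes applying $\tau_{\CH}^{-1}$ to them. Once these points are settled, the corollary follows at once from Propositions \ref{Prop 3.6}, \ref{Prop4.1}, \ref{Prop 4.3}, \ref{Prop 4.4}.
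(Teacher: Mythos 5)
Your overall route is the paper's own: the authors' entire proof is the one\mbox{-}line citation of Propositions \ref{Prop 4.4}, \ref{Prop4.1} and \ref{Prop 3.6} for parts $(a)$, $(b)$, $(c)$ respectively, and your preliminary check that the objects involved are indecomposable and neither projective nor injective in $\mathcal{H}$, together with your treatments of $(a)$ and $(c)$, supply exactly the bookkeeping the paper leaves implicit; those two parts are fine.

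The step that fails is in $(b)$, precisely in the extra case you add beyond what the paper invokes. When $A$ is projective and non-injective, Proposition \ref{Prop 4.3} gives $\tau_{\CH}(A\st{f}\rt B)\simeq(\rad(A)\st{i}\rt A)$, and you assert that ``in either case one identifies the output as a representative in $\mathcal{D}$.'' It is not one: the target of this minimal right almost split map is the projective module $A$, whereas $\mathcal{D}$ consists of sink maps $(C\st{g}\rt D)$ with $D$ non-projective. Symmetrically, an object $(C\st{g}\rt D)\in\mathcal{D}$ with $D$ injective has, by the dual of Proposition \ref{Prop 4.3}, $\tau^{-1}_{\CH}(C\st{g}\rt D)\simeq(D\st{p}\rt D/{\rm soc}(D))$, whose source is injective and which therefore lies outside $\mathcal{C}$; so the two boundary families are not exchanged with one another, contrary to what your closing remark about ``objects of $\mathcal{C}$ with projective source corresponding to objects of $\mathcal{D}$ with injective target'' suggests. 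The point is that $\tau_{\CH}$ sends the source map of $A$ to the sink map of the \emph{same} module $A$, so the bijection only holds between source maps of modules that are non-injective \emph{and} non-projective and sink maps of such modules --- which is exactly, and only, the situation of Proposition \ref{Prop4.1}, where both almost split sequences $\delta$ and $\delta'$ around $A$ exist. Part $(b)$ should be read with that additional hypothesis on both sides (it is automatic, e.g., for $\La$ self-injective); your instinct that the projective-source case needs separate attention was correct, but Proposition \ref{Prop 4.3} in fact shows that this case cannot land in $\mathcal{D}$ rather than that it does.
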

\begin{proof}
  Parts $(a)$, $(b)$, and $(c)$ follow respectively from Propositions \ref{Prop 4.4}, \ref{Prop4.1}, and \ref{Prop 3.6}.
\end{proof}

We close the section by mentioning the following corollary which is well-known in the literature; see e.g., \cite[Proposition III.8.6]{SY}. Nonetheless, we  include it here to indicate how working in the morphism category $\mathcal{H}$ may affect, and ease, calculations related to Auslander-Reiten translates in $\mmod\La$.

\begin{corollary}
Let $P$ be an indecomposable projective-injective $\Lambda$-module. Then $\tau( P/\text{soc}(P))\simeq \text{rad}(P)$.
\end{corollary}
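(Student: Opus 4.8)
The plan is to deduce this from Proposition \ref{Prop 4.5} by reading off what $\tau_{\mathcal H}$ does to the two end terms of the almost split sequence $\eta$ exhibited there. Proposition \ref{Prop 4.5} states that, for $P$ indecomposable projective-injective and $p\colon Q\rt\text{soc}(P)$ the projective cover of the socle, the sequence
$$\xymatrix@1{0\ar[r] & {\left(\begin{smallmatrix}0\\ P\end{smallmatrix}\right)}_{0}\ar[rr]^-{\left(\begin{smallmatrix}0\\ 1\end{smallmatrix}\right)} & & {\left(\begin{smallmatrix}Q\\ P\end{smallmatrix}\right)}_{ip}\ar[rr]^-{\left(\begin{smallmatrix}1\\ 0\end{smallmatrix}\right)} & & {\left(\begin{smallmatrix}Q\\ 0\end{smallmatrix}\right)}_{0}\ar[r] & 0}$$
is almost split in $\mathcal H$, so that $\tau_{\mathcal H}(Q\rt 0)\simeq(0\rt P)$. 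On the other hand, Proposition \ref{Prop 3.4} computes $\tau_{\mathcal H}(C\rt 0)$ for an arbitrary module $C$ as $(\nu P_1\st{\nu(\alpha)}\rt\nu P_0)$ where $P_1\st{\alpha}\rt P_0\rt C\rt 0$ is a minimal projective presentation. The strategy is therefore: identify $Q$ in terms of $P$, apply Proposition \ref{Prop 3.4} to $C=Q$, and match the outcome against $(0\rt P)$ to extract the claimed identity $\tau(P/\text{soc}(P))\simeq\text{rad}(P)$.

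First I would pin down $Q$. Since $P$ is injective with simple socle $\text{soc}(P)$, and $Q\rt\text{soc}(P)$ is a projective cover, $Q$ is the projective cover of a simple module; and because $P$ is itself indecomposable projective-injective, one checks (this is the standard fact underlying \cite[Proposition III.8.6]{SY}) that $Q\simeq P$. Indeed the inclusion $\text{soc}(P)\rt P$ together with the quotient $P\rt P/\text{rad}(P)$ forces $P$ to be the projective cover of $\text{soc}(P)$ once one knows $\text{soc}(P)$ and $P/\text{rad}(P)$ are linked by the Nakayama permutation; alternatively, observe directly from the proof of Proposition \ref{Prop 4.5} — where the analogous object $\nu^{-1}P$ appeared as the projective cover of $\text{soc}(P)$ — that $Q\simeq\nu^{-1}P\simeq P$ in the self-injective (here merely: $P$ projective-injective) situation. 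So $\tau_{\mathcal H}(P\rt 0)\simeq(0\rt P)$.

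Next I would compute $\tau_{\mathcal H}(P\rt 0)$ via Proposition \ref{Prop 3.4}. A minimal projective presentation of $P$ as a $\Lambda$-module is trivially $0\rt P\st{1}\rt P\rt P\rt 0$, but that is not what we want; instead the relevant computation is to run Proposition \ref{Prop 3.4} in reverse. We know $\tau_{\mathcal H}(C\rt 0)\simeq(\nu P_1\st{\nu\alpha}\rt\nu P_0)$; comparing with $\tau_{\mathcal H}(P\rt 0)\simeq(0\rt P)$ we would instead want a module $C$ with $\tau_{\mathcal H}(C\rt 0)\simeq(0\rt P)$, forcing $\nu P_1=0$, i.e. $C$ projective — the uninteresting case. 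The correct route is the other one: apply Proposition \ref{Prop 3.4} to $C=P/\text{soc}(P)$. Its minimal projective presentation is $P\st{i}\rt P\rt P/\text{soc}(P)\rt 0$ where... no: the minimal projective presentation of $P/\text{soc}(P)$ is $\Omega\big(P/\text{soc}(P)\big)\hookrightarrow P\twoheadrightarrow P/\text{soc}(P)$ with $\Omega(P/\text{soc}(P))=\text{soc}(P)$, and continuing, the projective cover of $\text{soc}(P)$ is $P\rt\text{soc}(P)$ (using $Q\simeq P$), giving minimal projective presentation $P\st{\iota}\rt P\rt P/\text{soc}(P)\rt 0$ where $\iota$ is the composite $P\twoheadrightarrow\text{soc}(P)\hookrightarrow P$. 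Then Proposition \ref{Prop 3.4} gives $\tau_{\mathcal H}(P/\text{soc}(P)\rt 0)\simeq(\nu P\st{\nu\iota}\rt\nu P)$. But $\nu P\simeq P$ since $P$ is projective-injective, and one identifies $\nu\iota$ up to the isomorphism as again a socle-to-top composite, whose cokernel is $P/\text{soc}(P)$ and whose image is $\text{soc}(P)$; hence the cokernel in $\mathcal H$-language leads, via Proposition \ref{Prop 3.6} or a direct mesh argument, to $\tau(P/\text{soc}(P))\simeq\text{rad}(P)$ in $\mmod\Lambda$. The cleanest phrasing: Proposition \ref{Prop 3.6} applied to the indecomposable non-projective object $(\text{rad}(P)\st{i}\rt P)\in\mathcal H$ (with cokernel $P/\text{rad}(P)$... no, with the right object) — I would instead invoke Corollary \ref{Prop 5.4}(c) together with Lemma \ref{Lemma 5.2}: the almost split sequence in $\mathcal H$ ending at $(0\rt P/\text{soc}(P))$ has first term $\tau_{\mathcal H}(0\rt P/\text{soc}(P))$, and by Lemma \ref{Lemma 5.2}(1) the sequence $0\rt(A\st{1}\rt A)\rt(A\rt B)\rt(0\rt P/\text{soc}(P))\rt 0$ identifies $A=\tau(P/\text{soc}(P))$; then comparing with the concrete sequence produced above pins $A=\text{rad}(P)$.

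The main obstacle is bookkeeping rather than conceptual: one must correctly track the maps (not just the objects) through Construction \ref{projective Cover} and the duality $D_{\mathcal H}$, and in particular verify the identification $\nu\iota$-up-to-isomorphism is the socle-to-radical-quotient map of $P$, so that its image is exactly $\text{rad}(P)$ (equivalently, that $\text{soc}(P)\simeq P/\text{rad}(P)$ is killed appropriately). A subtle point worth stating explicitly is why $Q\simeq P$: this rests on $P$ being simultaneously projective and injective with simple socle, which is exactly the hypothesis. Once that identification is in place, the rest is a direct reading of Propositions \ref{Prop 3.4}, \ref{Prop 3.6}, and \ref{Prop 4.5}, and the corollary drops out. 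I would keep the written proof to two or three lines: ``By Proposition \ref{Prop 4.5}, $\tau_{\mathcal H}(Q\rt 0)\simeq(0\rt P)$ with $Q\simeq P$; combining with Proposition \ref{Prop 3.4} applied to $C=P/\text{soc}(P)$ and Lemma \ref{Lemma 5.2}(1) gives $\tau(P/\text{soc}(P))\simeq\text{rad}(P)$.''
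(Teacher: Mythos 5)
Your route is genuinely different from the paper's, but it contains a substantive error and an uncompleted final step, so as written it does not prove the corollary. The error is the identification $Q\simeq P$. The proof of Proposition \ref{Prop 4.5} shows that the projective cover $Q$ of $\text{soc}(P)$ is $\nu^{-1}P$, and $\nu^{-1}P\simeq P$ holds exactly when the Nakayama permutation fixes the vertex of $P$ --- true for weakly symmetric algebras, false in general. For instance, for the self-injective Nakayama algebra $kQ/J^2$ with $Q$ a cyclic quiver on two vertices, $P=P(1)$ is projective-injective with $\text{soc}(P(1))=S(2)$, whose projective cover is $P(2)\neq P(1)$. The same confusion recurs as ``$\nu P\simeq P$ since $P$ is projective-injective'': injectivity of $P$ means $P\simeq\nu P'$ for the projective $P'=\nu^{-1}P$, not that $\nu$ fixes $P$. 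Consequently your claimed minimal projective presentation $P\st{\iota}\rt P\rt P/\text{soc}(P)\rt 0$ is wrong; the correct one is $\nu^{-1}P\st{\iota}\rt P\rt P/\text{soc}(P)\rt 0$, so Proposition \ref{Prop 3.4} yields $\tau_{\CH}(P/\text{soc}(P)\rt 0)\simeq(P\st{\nu\iota}\rt\nu P)$, and identifying $\tau(P/\text{soc}(P))=\Ker(\nu\iota)$ with $\text{rad}(P)$ is then essentially the original problem over again.

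Independently of this, the endgame does not close. You compute (a candidate for) $\tau_{\CH}(P/\text{soc}(P)\rt 0)$, but then propose to conclude via Lemma \ref{Lemma 5.2}(1), which concerns the almost split sequence ending at $(0\rt P/\text{soc}(P))$ --- a different object, whose translate is $(\tau(P/\text{soc}(P))\st{1}\rt\tau(P/\text{soc}(P)))$. No concrete almost split sequence ending at $(0\rt P/\text{soc}(P))$ is ever produced, so there is nothing to ``compare with,'' and $A=\text{rad}(P)$ is asserted rather than derived. The paper's proof avoids all of this: from \cite[Lemma 1.3]{RS1} it takes the minimal left almost split map $(P\st{1}\rt P)\rt(P\rt P/\text{soc}(P))$ and the minimal right almost split map $(\text{rad}(P)\rt P)\rt(P\st{1}\rt P)$, deduces $\tau_{\CH}(P\rt P/\text{soc}(P))\simeq(\text{rad}(P)\st{i}\rt P)$ by indecomposability of the latter source, and then matches first components against Proposition \ref{Prop 4.4}, which identifies this translate as $(\tau(P/\text{soc}(P))\st{e}\rt I)$. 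If you want to salvage your approach, work with the object $(P\rt P/\text{soc}(P))$ rather than with $(Q\rt 0)$ or $(0\rt P/\text{soc}(P))$ --- at which point you have rejoined the paper's argument.
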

\begin{proof}
From \cite[Lemma 1.3]{RS1} we have  the minimal left almost split morphism $\left(\begin{smallmatrix}
1\\p
\end{smallmatrix}\right):\left(\begin{smallmatrix}
P\\ P
\end{smallmatrix}\right)_1\rt \left(\begin{smallmatrix}
P\\P/\text{soc}(P)
\end{smallmatrix} \right)_{p}$ and the  minimal right almost split morphism $\left(\begin{smallmatrix}
i\\1
\end{smallmatrix}\right):\left(\begin{smallmatrix}
\text{rad}(P)\\ P
\end{smallmatrix}\right)_i\rt \left(\begin{smallmatrix}
P\\P
\end{smallmatrix} \right)_{1}$ in $\mathcal{H}$.  Hence there exists an irreducible map $\tau_{\CH}\left(\begin{smallmatrix}
P\\ P/\text{soc}P
\end{smallmatrix}\right)_p\rt \left(\begin{smallmatrix}
P\\ P
\end{smallmatrix}\right)_1$. This shows that $\tau_{\CH}\left(\begin{smallmatrix}
P\\ P/\text{soc}P
\end{smallmatrix}\right)_p$ is isomorphic to a direct summand of $\left(\begin{smallmatrix}
\text{rad}(P)\\ P
\end{smallmatrix}\right)_i$. Adding that $\left(\begin{smallmatrix}
\text{rad}(P)\\ P
\end{smallmatrix}\right)_i$ is indecomposable by the hypothesis, one deduces that $\tau_{\CH}\left(\begin{smallmatrix}
P\\ P/\text{soc}P
\end{smallmatrix}\right)_p\simeq \left(\begin{smallmatrix}
\text{rad}(P)\\ P
\end{smallmatrix}\right)_i$. On the other hand, if $e:\tau (P/\text{soc}(P))\rt I$ is the injective envelope, then by Proposition \ref{Prop 4.4},  $\tau_{\CH}\left(\begin{smallmatrix}
P\\ P/\text{soc}P
\end{smallmatrix}\right)_p=\left(\begin{smallmatrix}
\tau (P/\text{soc}(P))\\  I
\end{smallmatrix}\right)_e$. This ends the proof.
\end{proof}

\section{The middle terms}\label{Subsection 4.2}

In this section we give some information about the middle terms of almost split sequences in $\mathcal{H}$. Some of these sequences have been already considered  in previous section. In particular, the form of indecomposable (projective) injective objects appearing in the middle terms will be discussed. Some of the results presented in this section will play crucial role in the next one.

\begin{proposition}\label{Prop 4.6}
Let $A$ be an indecomposable non-injective $\Lambda$-module. The middle term $\mathbb{B}$ of the almost split sequence in $\mathcal{H}$ ending at $(A\rt 0)$ is of the form  $\mathbb{B}=\mathbb{X}\oplus (I\rt 0)$, where $I$ is injective (possibly zero), and $\mathbb{X}$ is an indecomposable non-injective and non-projective object in $\mathcal{H}$. Further, if $A$ is not isomorphic to a direct summand of any module of the form $J/\text{soc}(J)$ for indecomposable injective $\Lambda$-modules $J$, then $I=0.$
\end{proposition}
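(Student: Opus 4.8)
We first record the setup. Since the indecomposable projective objects of $\mathcal{H}$ are $(0\rt P)$ and $(P\st{1}\rt P)$, the object $(A\rt 0)$ is indecomposable and non-projective, so an almost split sequence $0\rt \tau_{\mathcal{H}}(A\rt 0)\rt \mathbb{B}\rt (A\rt 0)\rt 0$ exists; by Proposition \ref{Prop 3.4} we have $\tau_{\mathcal{H}}(A\rt 0)\simeq \mathbb{Y}:=(\nu P_1\st{\nu(\alpha)}\rt \nu P_0)$, where $P_1\st{\alpha}\rt P_0\rt A\rt 0$ is the minimal projective presentation of $A$, and $\mathbb{Y}$ is indecomposable because $\tau_{\mathcal{H}}$ preserves indecomposability of non-projectives. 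The plan is to decompose $\mathbb{B}$ into indecomposables and to control which of its summands can be projective or injective, using the standard fact that in a Krull--Schmidt category with almost split sequences the multiplicity of an indecomposable $\mathbb{C}$ in $\mathbb{B}$ equals the number of arrows $\mathbb{C}\rt (A\rt 0)$ in the Auslander--Reiten quiver, equivalently the number of arrows $\mathbb{Y}\rt \mathbb{C}$, and that these numbers are read off from the minimal right (resp. left) almost split map attached to $\mathbb{C}$ whenever $\mathbb{C}$ is projective (resp. injective).

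Next I would rule out the unwanted summands. No summand $(0\rt P)$ can occur, since $\Hom_{\mathcal{H}}((0\rt P),(A\rt 0))\cong \Hom_{\Lambda}(0,A)=0$, so there is no arrow. No summand $(M\st{1}\rt M)$ with $M$ indecomposable injective can occur: such an object is injective in $\mathcal{H}$ with $\text{soc}_{\mathcal{H}}(M\st{1}\rt M)=(0\rt \text{soc}(M))$, so its minimal left almost split map is $(M\st{1}\rt M)\rt (M\st{\pi}\rt M/\text{soc}(M))$ with $\pi$ the canonical epimorphism; since $M$ is indecomposable and $\pi$ is surjective, $(A\rt 0)$ can be a summand of $(M\st{\pi}\rt M/\text{soc}(M))$ only if $A\cong M$ is a simple injective module, contradicting that $A$ is non-injective. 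Finally, no summand $(M\st{1}\rt M)$ with $M$ indecomposable projective can occur: when $M$ is moreover injective this is the previous case, and when $M$ is non-injective the object $(M\st{1}\rt M)$ is projective in $\mathcal{H}$ with minimal right almost split map $(\text{rad}(M)\st{i}\rt M)\rt(M\st{1}\rt M)$ by \cite[Lemma 1.3]{RS1}, so if $\mathbb{Y}$ were a summand of $(\text{rad}(M)\st{i}\rt M)$ then, passing to codomains, the injective module $\nu P_0$ would be a direct summand of the indecomposable module $M$, forcing $\nu P_0=0$ and hence $A=0$. Consequently $\mathbb{B}$ has no projective summand, and all of its injective summands have the form $(I_k\rt 0)$ with $I_k$ indecomposable injective.

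For these injective summands I would compute the multiplicities. The object $(I_k\rt 0)$ is injective in $\mathcal{H}$ with $\text{soc}_{\mathcal{H}}(I_k\rt 0)=(\text{soc}(I_k)\rt 0)$, so its minimal left almost split map is $(I_k\rt 0)\rt (I_k/\text{soc}(I_k)\rt 0)$; therefore the multiplicity of $(I_k\rt 0)$ in $\mathbb{B}$ equals the multiplicity of $(A\rt 0)$ as a summand of $(I_k/\text{soc}(I_k)\rt 0)$, that is, the multiplicity of $A$ in $I_k/\text{soc}(I_k)$. Hence the injective part $I$ of $\mathbb{B}$ is $\bigoplus_k I_k^{\,\mu_k}$ with $\mu_k$ equal to that multiplicity, which is injective, and $I=0$ precisely when $A$ is not a direct summand of any $J/\text{soc}(J)$ with $J$ indecomposable injective — this is the final assertion. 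It remains to prove that the complement $\mathbb{X}$ of $(I\rt 0)$ in $\mathbb{B}$ is indecomposable; it is then automatically non-projective (as $\mathbb{B}$ has no projective summand) and non-injective (as all injective summands of $\mathbb{B}$ were absorbed into $I$).

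For this last point, I would first observe that the functor $\mathrm{dom}\colon (X\st{f}\rt Y)\mapsto X$ is exact, so applying it to the almost split sequence gives a short exact sequence $0\rt \nu P_1\rt \mathrm{dom}(\mathbb{B})\rt A\rt 0$; since $\nu P_1$ is injective this splits, $\mathrm{dom}(\mathbb{B})\cong A\oplus \nu P_1$, and applying $\mathrm{cod}\colon (X\st{f}\rt Y)\mapsto Y$ shows that the left-hand map of the sequence induces an isomorphism $\nu P_0\cong\mathrm{cod}(\mathbb{B})$. Thus $\mathbb{B}\cong (\nu P_1\oplus A\st{[\,\nu(\alpha)\ \ \beta\,]}\rt \nu P_0)$ for some $\beta\colon A\rt \nu P_0$, the copy of $\nu P_1$ in the domain being the kernel of the ``domain component'' of the minimal right almost split map, and the $(I_k\rt 0)$-part of $\mathbb{B}$ corresponding to the maximal injective submodule of $\ker[\,\nu(\alpha)\ \beta\,]$ (which is automatically a direct summand of the domain). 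To see that what remains, namely $\mathbb{X}$, is indecomposable, I would combine this normal form with the minimal right almost split map $\mathbb{B}\rt (A\rt 0)$, which necessarily has the shape $(h,0)$ with $h\colon \mathrm{dom}(\mathbb{B})\rt A$: writing $\mathbb{B}=\bigoplus_i \mathbb{B}_i$, each component $\mathbb{B}_i\rt (A\rt 0)$ is irreducible and hence nonzero, so every $h|_{\mathrm{dom}(\mathbb{B}_i)}$ is nonzero; on the other hand, $A$ being indecomposable and not a direct summand of any $I_k$, one checks that after choosing the splitting of $h$ appropriately every summand of $\mathbb{B}$ with non-zero codomain other than a distinguished one has its domain inside $\ker h$, which would make the corresponding $h|_{\mathrm{dom}(\mathbb{B}_i)}$ vanish — forcing the non-injective part to reduce to a single indecomposable. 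I expect this last bookkeeping — in particular treating the case where the $A\rt A$ component of $h$ on the distinguished summand fails to be invertible, which is exactly what allows the $(I_k\rt 0)$-summands to occur — to be the genuinely delicate step; the no-loops theorem for the Auslander--Reiten quiver of an Artin algebra (ruling out $(A\rt 0)$ itself as a summand of $\mathbb{B}$) and the explicit description of $\mathbb{Y}=\tau_{\mathcal{H}}(A\rt 0)$ are the additional inputs I would bring in to push it through.
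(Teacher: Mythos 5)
Your preparatory steps are sound and in places more explicit than the paper's: ruling out summands $(0\rt P)$ via $\Hom_{\mathcal{H}}((0\rt P),(A\rt 0))=0$, ruling out summands $(M\st{1}\rt M)$ via the minimal almost split maps of \cite[Lemma 1.3]{RS1}, and identifying the multiplicity of each $(I_k\rt 0)$ in $\mathbb{B}$ with the multiplicity of $A$ in $I_k/\text{soc}(I_k)$ all work and correctly yield the final assertion of the proposition. The genuine gap is the central claim that the non-injective part $\mathbb{X}$ of $\mathbb{B}$ is a \emph{single} indecomposable. Your proposed route --- the normal form $\mathbb{B}\simeq(\nu P_1\oplus A\rt \nu P_0)$ followed by ``bookkeeping'' on the components of the minimal right almost split map --- is not carried out, and as sketched it does not close: the indecomposable summands $\mathbb{B}_i=(X_i\rt Y_i)$ of $\mathbb{B}$ need not have indecomposable domains or codomains, so knowing $\bigoplus_i X_i\simeq \nu P_1\oplus A$ and $\bigoplus_i Y_i\simeq\nu P_0$ does not by itself exclude, say, two distinct non-injective summands each with injective domain and nonzero codomain. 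You yourself flag this step as ``genuinely delicate,'' and it is exactly the part of the proof that is missing.

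The paper closes this in one move that your argument never touches. Since $A$ is non-injective, there is an almost split sequence $0\rt A\st{f}\rt B\st{g}\rt C\rt 0$ in $\mmod\La$, and Lemma \ref{Lemma 5.2}(2) lifts it to the almost split sequence $0\rt(A\rt 0)\rt(B\st{g}\rt C)\rt(C\st{1}\rt C)\rt 0$ in $\mathcal{H}$, whose middle term $(B\st{g}\rt C)$ is \emph{indecomposable}. Hence any indecomposable non-injective direct summand $\mathbb{Z}$ of $\mathbb{B}$ yields an irreducible map $(A\rt 0)\rt\tau^{-1}_{\mathcal{H}}\mathbb{Z}$, forcing $\tau^{-1}_{\mathcal{H}}\mathbb{Z}$ to be a summand of $(B\st{g}\rt C)$, hence $\tau^{-1}_{\mathcal{H}}\mathbb{Z}\simeq(B\st{g}\rt C)$ and $\mathbb{Z}\simeq\tau_{\mathcal{H}}(B\st{g}\rt C)$, occurring with multiplicity one by the mesh relation. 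I recommend you replace your final paragraph with this argument; the remainder of your proof can stand, and your multiplicity computation for the $(I_k\rt 0)$ gives a slightly more quantitative version of the paper's last assertion.
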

\begin{proof}
Let
$0 \rt \mathbb{C}\rt \mathbb{B}\rt \mathbb{A}\rt 0$ be the almost split sequence in $\mathcal{H}$ ending at $\mathbb{A}=(A\rt 0)$. Since $A$ is non-injective, there is an almost split sequence $0 \rt A\st{f}\rt B\st{g}\rt C\rt 0$	in $\mmod \La$. Then,  by Lemma \ref{Lemma 5.2}, one has the almost split sequence
$$\xymatrix@1{   0\ar[r] & {\left(\begin{smallmatrix} A\\ 0\end{smallmatrix}\right)}_{0}
	\ar[rr]^-{\left(\begin{smallmatrix} f\\ 0\end{smallmatrix}\right)}
	& & {\left(\begin{smallmatrix}B\\ C \end{smallmatrix}\right)}_{g}\ar[rr]^-{\left(\begin{smallmatrix} g \\ 1\end{smallmatrix}\right)}& &
	{\left(\begin{smallmatrix}C\\C\end{smallmatrix}\right)}_{1}\ar[r]& 0 }$$
in $\mathcal{H}$ with  indecomposable middle term $(B\st{g}\rt C)$.
Evidently $(B\st{g}\rt C)$ is non-projective so that, by \cite[Proposition VII.1.5]{AuslanreitenSmalo},  there is an irreducible map	
$\left(\begin{smallmatrix}
\phi_1\\\phi_2
\end{smallmatrix}\right):\tau_{\mathcal{H}}\left(\begin{smallmatrix}
B\\ C
\end{smallmatrix}\right)_g\rt \left(\begin{smallmatrix}
A\\ 0
\end{smallmatrix} \right)_0$. Set $\mathbb{X}=\tau_{\mathcal{H}}\left(\begin{smallmatrix}
B\\ C
\end{smallmatrix}\right)_g$. Then $\mathbb{X}$ is necessarily non-injective and isomorphic to a direct summand of $\mathbb{B}$. Hence, for some $\mathbb{X'}$, we have $\mathbb{B}=\mathbb{X}\oplus \mathbb{X}'$. If $\mathbb{X}'=0$, the proof is complete. Else, if $\mathbb{X}'\neq 0$, then
$\mathbb{X}'$ is injective since otherwise the indecomposability of $(B\st{g}\rt C)$ will be rejected. Hence we may put $\mathbb{X}'=(I\rt 0)\oplus(J\st{1}\rt J)$ for injective $\Lambda$-modules $I$ and $J$.  But $J=0$ since otherwise by \cite[Lemma 1.3]{RS1},
$\left(\begin{smallmatrix}
1\\p
\end{smallmatrix}\right):\left(\begin{smallmatrix}
J\\ J
\end{smallmatrix}\right)_1\rt \left(\begin{smallmatrix}
J\\J/\text{soc}(J)
\end{smallmatrix} \right)_{p}$ is a minimal left almost split map in $\mathcal{H}$ ($p$ denotes the canonical quotient map). Hence $(A\rt 0)$ is isomorphic to a direct summand of $(J\st{p}\rt J/\text{soc}(J))$, making $A$ consequently into an injective module which is impossible. The first statement is now settled. The last assertion comes up by a similar argument; namely, if $I\neq 0$ then, again by \cite[Lemma 1.3]{RS1}, there is the minimal left almost split map $\left(\begin{smallmatrix}
I\\ 0
\end{smallmatrix}\right)\rt \left(\begin{smallmatrix}
I/\text{soc}(I)\\0
\end{smallmatrix} \right)$. Hence $A$ would be isomorphic to a direct summand of $I/\text{soc}(I)$, disaccording the hypothesis.
\end{proof}

In Proposition \ref{Prop 3.6}, we determined the Auslander-Reiten translation of indecomposable non-projective objects in $\mathcal{H}$ of the form $(P\rt Q)$. Below we try to recognize the middle term of the almost split sequence ending in such objects.

\begin{proposition}\label{Proposition 4.7}
Assume $\La$ is self-injective and $P, Q$ are projective $\Lambda$-modules. Assume further that $\mathbb{P}=(P\st{f}\rt Q)$ is an indecomposable non-projective object in $\mathcal{H}$. Then the middle term  $\mathbb{B}$ of the almost split sequence in $\mathcal{H}$ ending at $\mathbb{P}$ is of the form $\mathbb{B}=\mathbb{W}\oplus (0\rt V)$, where $V$ is a projective $\Lambda$-module (possibly zero) and $\mathbb{W}$ is indecomposable non-projective and non-injective.
\end{proposition}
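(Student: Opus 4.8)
The plan is to reduce the statement, via the duality $D_{\mathcal{H}}$, to the case already settled by Proposition \ref{Prop 4.6}. Write $M=\text{Coker}(P\st{f}\rt Q)$. By Proposition \ref{Prop 3.6} and its proof, $M$ is an indecomposable non-projective $\La$-module whose minimal projective presentation is $P\st{f}\rt Q\rt M\rt 0$, and $\tau_{\mathcal{H}}\mathbb{P}\simeq (0\rt\tau M)$. Hence the almost split sequence in $\mathcal{H}$ ending at $\mathbb{P}$ has the shape
$$0\rt (0\rt\tau M)\rt\mathbb{B}\rt\mathbb{P}\rt 0,$$
so everything comes down to analysing this middle term $\mathbb{B}$.

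Next I would apply the duality $D_{\mathcal{H}}\colon\mathcal{H}\rt\mathcal{H}^{\op}$, where $\mathcal{H}^{\op}$ is the morphism category of $\La^{\op}$. A duality carries almost split sequences to almost split sequences with the two ends interchanged, and $D_{\mathcal{H}}(0\rt\tau M)=(D\tau M\rt 0)$; thus
$$0\rt D_{\mathcal{H}}\mathbb{P}\rt D_{\mathcal{H}}\mathbb{B}\rt (D\tau M\rt 0)\rt 0$$
is the almost split sequence in $\mathcal{H}^{\op}$ ending at $(D\tau M\rt 0)$, and $D_{\mathcal{H}}\mathbb{B}$ is its middle term. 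This is the one spot where self-injectivity is genuinely used: $\tau M$ is non-injective, being the left-hand term of an almost split sequence in $\mmod\La$, hence it is non-projective because $\La$ is self-injective, and therefore $D\tau M$ is an indecomposable \emph{non-injective} $\La^{\op}$-module. Consequently Proposition \ref{Prop 4.6}, applied over the Artin algebra $\La^{\op}$ to the module $D\tau M$, yields a decomposition $D_{\mathcal{H}}\mathbb{B}\simeq\mathbb{X}'\oplus (I'\rt 0)$ in which $I'$ is an injective $\La^{\op}$-module (possibly zero) and $\mathbb{X}'$ is an indecomposable non-injective non-projective object of $\mathcal{H}^{\op}$.

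It remains to dualize back. Applying $D_{\mathcal{H}}$ once more, and using $D_{\mathcal{H}}(I'\rt 0)=(0\rt DI')$ together with $D_{\mathcal{H}}^{2}\simeq\text{id}$, one obtains $\mathbb{B}\simeq D_{\mathcal{H}}\mathbb{X}'\oplus (0\rt DI')$. Put $\mathbb{W}=D_{\mathcal{H}}\mathbb{X}'$ and $V=DI'$. Since $D_{\mathcal{H}}$ interchanges projective and injective objects, $\mathbb{W}$ is indecomposable, non-projective and non-injective, while $V$ is a projective $\La$-module (possibly zero); this is exactly the asserted form $\mathbb{B}=\mathbb{W}\oplus (0\rt V)$.

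Since the computation is short once the duality is set up, there is no serious obstacle of a computational nature; the two things that must be handled with care are the appeal to Proposition \ref{Prop 3.6} to recognize $\tau_{\mathcal{H}}\mathbb{P}$ as an object of the form $(0\rt N)$, and the elementary but indispensable remark that over a self-injective algebra the non-injectivity of $\tau M$ upgrades to its non-projectivity — equivalently, to the non-injectivity of $D\tau M$ over $\La^{\op}$ — which is precisely the hypothesis that lets $D\tau M$ be fed into Proposition \ref{Prop 4.6}. (A direct proof imitating Proposition \ref{Prop 4.6} is also possible — isolate one distinguished non-projective, non-injective summand of $\mathbb{B}$ from a neighbouring mesh built out of Lemma \ref{Lemma 5.2} and \cite[Proposition VII.1.5]{AuslanreitenSmalo}, show it occurs with multiplicity one, and kill injective summands of $\mathbb{B}$ using the minimal left almost split maps out of injectives from \cite[Lemma 1.3]{RS1} — but the duality route above avoids having to pin down that mesh explicitly.)
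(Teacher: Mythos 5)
Your argument is correct, but it takes a genuinely different route from the paper's. The paper stays on the side of $\La$: using self-injectivity it identifies $\tau^{-1}_{\mathcal{H}}\mathbb{P}$ with $(M'\rt 0)$ for $M'=\Coker(\nu^{-1}(f))$ via Proposition \ref{Prop 3.4}, applies Proposition \ref{Prop 4.6} to the almost split sequence \emph{starting} at $\mathbb{P}$, transports the distinguished summand $\mathbb{X}$ of that middle term back through the mesh to obtain the summand $\mathbb{W}=\tau_{\mathcal{H}}\mathbb{X}$ of $\mathbb{B}$, re-verifies by hand that $\mathbb{X}$ and $\mathbb{W}$ are non-projective, and then draws a second mesh (built from Lemma \ref{Lemma 5.2} applied to the almost split sequence ending at $\tau N$) to show that the complementary summand of $\mathbb{B}$ consists only of objects $(0\rt P_i)$ with $P_i$ projective. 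You replace all of this bookkeeping by one application of the exact duality $D_{\mathcal{H}}$, which turns the sequence ending at $\mathbb{P}$ into the almost split sequence in $H(\La^{\op})$ ending at $(D\tau M\rt 0)$, and then you quote Proposition \ref{Prop 4.6} over $\La^{\op}$; self-injectivity enters only to upgrade the non-injectivity of $\tau M$ to non-projectivity, i.e.\ to the non-injectivity of $D\tau M$, which is exactly the hypothesis needed there. Your route is shorter and cleaner; the price is that it leans on the full statement of Proposition \ref{Prop 4.6} (in particular the non-projectivity of its summand $\mathbb{X}$, a point the paper's proof of the present proposition prefers to re-derive directly rather than import) and on the standard fact that a duality sends almost split sequences to almost split sequences with the end terms interchanged. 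What the paper's longer argument buys in exchange is an explicit picture of the relevant portion of $\Gamma_{\mathcal{H}}$ around $\mathbb{P}$, which is in the spirit of how these propositions are later combined in Lemma \ref{Lemma 6.5}.
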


\begin{proof}
Set $N=\text{Coker}(f)$ and $M:=\text{Coker}(\nu^{-1}P\st{\nu^{-1}(f)}\lrt \nu^{-1}Q)$. Since $\Lambda$ is self-injective, the sequence $\nu^{-1}P\st{\nu^{-1}(f)}\lrt \nu^{-1}Q\lrt M\lrt 0$ provides a projective presentation of $M$. Thus Proposition \ref{Prop 3.4}   yields that $\tau_{\mathcal{H}}\left(\begin{smallmatrix}
M\\
0
\end{smallmatrix}\right)\simeq \left(\begin{smallmatrix}
P\\
Q
\end{smallmatrix}\right)_{f}$. Therefore By Proposition \ref{Prop 4.6} one has the mesh diagram
\[
\xymatrix@C=2pc@R=1pc{
	{\phantom{III}}&&[\mathbb{X}]\ar[rdd]&& \\
	{\phantom{III}}&&[I_10]\ar[rd]&& \\
	&[PQ_f]\ar[ruu]\ar[ru]\ar[rdd]&\vdots&[M0]& \\
	{\phantom{III}}&&\vdots&& \\
	{\phantom{III}}&&[I_n0]\ar[ruu]&&
}
\]
in the Auslander-Reiten quiver of $\mathcal{H}$ which corresponds to the almost split sequence $$0\rt \mathbb{P}\rt \mathbb{X}\oplus \left(\begin{smallmatrix}
I\\
0
\end{smallmatrix}\right)\rt \left(\begin{smallmatrix}
M\\
0
\end{smallmatrix}\right)\rt 0.$$
Here $\mathbb{X}$ is indecomposable non-injective and the $I_i$ are indecomposable injective modules. Note that if $\mathbb{X}$ happens to be projective, then the aforementioned sequence shows that the projective dimension of $M$ is finite, so that it is projective as $\La$ is self-injective. This contradicts the non-projectivity of $\mathbb{P}$.  Therefore if, according to Proposition \ref{Prop 3.6},
$$0\lrt \left(\begin{smallmatrix}
0\\ \tau N
\end{smallmatrix}\right)\lrt \mathbb{B}\lrt\mathbb{P}\lrt 0$$ is the almost split sequence in $\mathcal{H}$ ending at $\mathbb{P}$, then $\mathbb{W}:=\tau_{\mathcal{H}}\mathbb{X}$ is a non-zero direct summand of $\mathbb{B}$. We claim that  the non-injective object $\mathbb{W}$ is in addition non-projective. Indeed, if this is not the case, then it is of the either forms $(P'\st{1}\rt P')$ or $(0\rt P')$ for an indecomposable projective $\Lambda$-module $P'$. However, the former does not occur as it is injective in $\mathcal{H}$.  Furthermore, the latter case would imply, by Proposition \ref{Prop 4.5}, that $\mathbb{X}=\tau_{\mathcal{H}}^{-1}(0\rt P')=(Q\rt 0)$ for some injective $\Lambda$-module $Q$. But this is injective; the claim thus follows.

\vspace{.1 cm}

Now let $ 0 \rt A\st{t}\rt B\st{r}\rt \tau N\rt 0$ be an almost split sequence in $\mmod \La$. Then Lemma \ref{Lemma 5.2} in conjunction with the data collected in previous paragraph gives the following part of the Auslander-Reiten quiver $\Gamma_{\CH}$

\[
\xymatrix@C=2pc@R=1pc{
&	[AB_t]\ar[rdd]&&[\mathbb{W}]\ar[rdd]&&[\mathbb{X}]\ar[rdd] && \\
&	&&[0P_1]\ar[rd]&&[I_10]\ar[rd]&& \\
[AA_1]\ar[ruu]&&[0 \,\,\tau N]\ar[ruu]\ar[ru]\ar[rdd]&\vdots&[PQ_f]\ar[ruu]\ar[ru]\ar[rdd]&\vdots &[M0]&\\
&&&\vdots&&\vdots&& \\
&	&&[0P_n]\ar[ruu]&&[I_n0]\ar[ruu]&&
}
\]
of $\mathcal{H}$, where the $P_i$ are projective. From the middle diagram we deduce $\mathbb{B}=\mathbb{W}\oplus (0\rt \oplus^n_{i=1} P_i)\oplus \mathbb{W}'$ where we add that  $\mathbb{W'}$ has no indecomposable projective direct summand of the form $(0\rt P')$ with $P'$ projective. We shall show $\mathbb{W}'=0$. One infers from the left-most mesh diagram that $\mathbb{W}'$ must be projective. Then its non-zero indecomposable direct summands should, according to the hypothesis, be only of the form  $(P''\st{1}\rt P'')$ for indecomposable projective $\Lambda$-modules $P''$. However, this may not happen since otherwise, as we did before,  $(0\rt \tau N)$ is isomorphic to a direct summand of  $\text{rad}(P''\st{1}\rt P'')=(\text{rad}(P'')\st{i}\rt P'')$. This forces $\tau N$  to be injective which is absurd.
\end{proof}

%The above fact shows that only projective indecomposable of the form $(0\rt P)$ appeared in the almost split sequence starting with the object of the form %$(0\rt C)$. Since $(0\rt C)$ is the Auslander-Reiten translation of some indecomposable  object $\mathbb{P}$ in $\CP(\La)$.  The converse is also true %since $\text{rad}(0\rt P)=(0\rt \text{rad}(P))$. In fact, as $\La$ is self-injective, the radical $\text{rad}(P)$ is indecomposable non-injective. So %$(0\rt P)$ lies in the middle term of an almost split sequence starting at $(0\rt \text{rad}(P))$.

We now turn to study further almost split sequences of the type declared in Proposition \ref{Prop 4.4}.

\begin{proposition}\label{Prop 4.8}

Let $\mathbb{B}$ be the middle term of the almost split sequence whose existence was proved in Proposition \ref{Prop 4.4}. Then
	\begin{itemize}
		\item [$(1)$] $\mathbb{B}$ has no direct summand of the form $(0\rt Q)$ with $Q$  projective.
		\item [$(2)$] $\mathbb{B}$ has an indecomposable non-projective direct summand.
	\end{itemize}	
\end{proposition}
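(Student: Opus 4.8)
The strategy is to keep the almost split sequence $\delta$ from Proposition \ref{Prop 4.4}, namely
$0\rt {\left(\begin{smallmatrix} C\\ I\end{smallmatrix}\right)}_{e}\rt {\left(\begin{smallmatrix}Z\\ X\end{smallmatrix}\right)}_{dh}\rt {\left(\begin{smallmatrix}P\\A\end{smallmatrix}\right)}_{p}\rt 0$,
firmly in view and to run the familiar ``a summand of the midterm would force a minimal left almost split map on the left term'' type of argument that was already used in Propositions \ref{Prop 4.6} and \ref{Proposition 4.7}. Throughout recall that the left term ${\left(\begin{smallmatrix} C\\ I\end{smallmatrix}\right)}_{e}$ is the injective envelope of the indecomposable module $C$ regarded as an object of $\mathcal{H}$, and it is \emph{not} injective in $\mathcal{H}$ (an indecomposable injective object of $\mathcal{H}$ has one of the forms $(I'\rt 0)$ or $(I'\st{1}\rt I')$), so all three terms of $\delta$ are non-injective indecomposables.

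For part $(1)$: suppose $\mathbb{B}$ had a direct summand $(0\rt Q)$ with $Q$ a nonzero projective. This object is projective in $\mathcal{H}$, so by \cite[Lemma 1.3]{RS1} the radical inclusion ${\left(\begin{smallmatrix} 0\\ \text{rad}(Q)\end{smallmatrix}\right)}_{i}\rt {\left(\begin{smallmatrix} 0\\ Q\end{smallmatrix}\right)}$ is a minimal right almost split map in $\mathcal{H}$, equivalently $(0\rt Q)$ receives an irreducible map only from (a summand of) $(0\rt \text{rad}(Q))$; dually, since $(0\rt Q)$ is a summand of the midterm of $\delta$, the left term ${\left(\begin{smallmatrix} C\\ I\end{smallmatrix}\right)}_{e}=\tau_{\mathcal H}{\left(\begin{smallmatrix}P\\A\end{smallmatrix}\right)}_{p}$ must be isomorphic to a direct summand of the domain of a minimal right almost split map into $(0\rt Q)$, i.e.\ of $(0\rt \text{rad}(Q))$. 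But $(0\rt \text{rad}(Q))$ has zero top component while ${\left(\begin{smallmatrix} C\\ I\end{smallmatrix}\right)}_{e}$ has nonzero top component $C$ (here one uses that $C$ has no projective summands, in fact $C=\tau A\neq 0$), a contradiction. Hence no such summand exists.

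For part $(2)$: the midterm $\mathbb{B}$ is nonzero since $\delta$ does not split (proved in Proposition \ref{Prop 4.4}); decompose $\mathbb{B}$ into indecomposables and suppose, toward a contradiction, that \emph{every} indecomposable summand is projective in $\mathcal{H}$. By part $(1)$ none of them is of the form $(0\rt Q)$, so each is of the form $(P_j\st{1}\rt P_j)$ with $P_j$ indecomposable projective; but $(P_j\st{1}\rt P_j)$ is injective in $\mathcal{H}$, which would make $\mathbb{B}$ injective and contradict the non-injectivity of the end terms of $\delta$ (an almost split sequence with injective midterm would have each end term a summand of the midterm, hence injective). Therefore $\mathbb{B}$ has at least one indecomposable summand that is not projective, which is exactly assertion $(2)$.

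The main obstacle is the bookkeeping in part $(1)$: one must be careful that ``$(0\rt Q)$ is a summand of $\mathbb{B}$'' really does force ${\left(\begin{smallmatrix} C\\ I\end{smallmatrix}\right)}_{e}$ to be a summand of $(0\rt\text{rad}(Q))$ — this is the standard fact that for an almost split sequence $0\rt \mathbb{Y}\rt \mathbb{B}\rt \mathbb{Z}\rt 0$ with indecomposable summand $\mathbb{B}_0$ of $\mathbb{B}$, the induced map $\mathbb{Y}\rt \mathbb{B}_0$ is irreducible, together with the description of minimal right almost split maps into indecomposable projectives of $\mathcal{H}$ via \cite[Lemma 1.3]{RS1} and \cite[Proposition VII.1.5]{AuslanreitenSmalo}. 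Once the top-component comparison $C\neq 0$ versus $0$ is invoked, both parts close quickly, and part $(2)$ is then essentially immediate from part $(1)$ plus the injectivity of the objects $(P'\st{1}\rt P')$.
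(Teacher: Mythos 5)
Your part $(1)$ is correct and is essentially the paper's own argument: a summand $(0\rt Q)$ of $\mathbb{B}$ would force the indecomposable left term $(C\st{e}\rt I)$ to be a direct summand of the domain $(0\rt \mathrm{rad}(Q))$ of the minimal right almost split map into $(0\rt Q)$, which is impossible since $C=\tau A\neq 0$.

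Part $(2)$, however, contains a genuine gap. You assert that an indecomposable summand $(P_j\st{1}\rt P_j)$ with $P_j$ projective is injective in $\mathcal{H}$. This is false for a general Artin algebra: the indecomposable injectives of $\mathcal{H}$ are $(I\rt 0)$ and $(I\st{1}\rt I)$ with $I$ an injective $\La$-module, so $(P_j\st{1}\rt P_j)$ is injective only when $P_j$ happens to be injective in $\mmod\La$, and Proposition \ref{Prop 4.4} makes no self-injectivity assumption. Your fallback claim --- that an almost split sequence with injective middle term forces its end terms to be injective --- is also false; for $\La=k[x]/(x^2)$ the sequence $0\rt S\rt\La\rt S\rt 0$ is almost split with projective-injective middle term and non-injective end terms. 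Note moreover that Proposition \ref{prop 4.9} of the paper shows that summands of the form $(P\st{1}\rt P)$ genuinely do occur in $\mathbb{B}$, so no argument that excludes them outright can succeed; what must be ruled out is that \emph{all} summands are of this form. The paper does this concretely: if $\mathbb{B}=(Z\st{dh}\rt X)\simeq\oplus(Q\st{1}\rt Q)$, then $dh$ is an isomorphism, so $d$ is a split epimorphism; since $d$ is obtained by pushing out the monomorphism $e$ (hence $d$ is a monomorphism with $\mathrm{Coker}(d)\simeq\mathrm{Coker}(e)$), it follows that $d$ and then $e$ are isomorphisms, making $C$ injective and contradicting the hypotheses of Proposition \ref{Prop 4.4}. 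You would need to replace your injectivity argument with a step of this kind.
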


\begin{proof}
$(1)$ We keep the notation of \ref{Prop 4.4}. If there is such a projective indecomposable direct summand $(0\rt Q)$, then $(C\st{e}\rt I)$ is isomorphic to a direct summand of $\text{rad}(0\rt Q)=(0\rt \text{rad}(Q))$, which is impossible.

\vspace{.1 cm}

$(2)$  Assume, to the contrary, that $\mathbb{B}$ is projective. By the first part of the proposition, we can write $\mathbb{B}=(Z\st{dh}\rt X)\simeq \oplus (Q\st{1}\rt Q)$ for various projective $\Lambda$-modules $Q$. Hence $dh$ is an isomorphism and this makes $d$ into a retraction. Consequently, $e$ is an isomorphism so that $C$ is injective. But this is not consistent to the hypothesis therein.
\end{proof}

The following result shows that the converse to Proposition \ref{Prop 4.8} is also true under some mild assumption.

\begin{proposition}\label{prop 4.9}
Let $P$ be an indecomposable projective-injective $\Lambda$-module with indecomposable non-injective radical $\text{rad}(P)$. Then $(P\st{1}\rt P)$ appears as a direct summand of the middle term of an almost split sequence of the type declared in Proposition \ref{Prop 4.4}.
\end{proposition}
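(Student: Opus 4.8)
The plan is to apply Proposition \ref{Prop 4.4} to the module $A=P/\text{soc}(P)$ and then extract $(P\st{1}\rt P)$ from the middle term of the resulting almost split sequence. First I would check that $A=P/\text{soc}(P)$ is an admissible input for Proposition \ref{Prop 4.4}. Since $P$ is indecomposable projective, $\text{top}(P)$ is simple, and $\text{top}(P/\text{soc}(P))\cong\text{top}(P)$; thus $A$ has simple top and is therefore indecomposable. Because $P\neq 0$ we have $\text{soc}(P)\neq 0$, so $\ell(A)<\ell(P)$ and $A$ cannot be projective (otherwise the epimorphism $p\colon P\rt A$ would split, forcing $A\cong P$). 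As $\ker p=\text{soc}(P)\subseteq\text{rad}(P)$ is superfluous in $P$, the canonical map $p\colon P\st{p}\rt P/\text{soc}(P)$ is precisely the projective cover of $A$.

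Next I would pin down $\tau A$. By the corollary preceding this section (equivalently \cite[Proposition III.8.6]{SY}) one has $\tau(P/\text{soc}(P))\cong\text{rad}(P)$, which is non-injective by hypothesis; this is the only place the standing assumption on $\text{rad}(P)$ is used, and it is exactly what guarantees that the almost split sequence $\eta\colon 0\rt\text{rad}(P)\rt B\rt A\rt 0$ in $\mmod\La$ feeding Proposition \ref{Prop 4.4} has a non-injective first term, hence that the construction there genuinely produces an almost split sequence in $\mathcal{H}$. For completeness I would also note that the injective envelope of $\text{rad}(P)$ is $P$ itself: $\text{soc}(\text{rad}(P))=\text{soc}(P)$ is essential in $P$ and $P$ is injective, so the push‑out datum $e$ of Proposition \ref{Prop 4.4} is simply the inclusion $\text{rad}(P)\hookrightarrow P$.

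With these verifications in place, Proposition \ref{Prop 4.4} yields an almost split sequence $\delta$ in $\mathcal{H}$ ending at $\mathbb{X}=(P\st{p}\rt P/\text{soc}(P))$; by uniqueness of almost split sequences, its middle term $\mathbb{B}$ is the middle term of \emph{the} almost split sequence terminating at that object, and $\mathbb{X}$ is indecomposable and non‑projective in $\mathcal{H}$ (it is neither of the form $(0\rt P')$ nor $(P'\st{1}\rt P')$). On the other hand, the corollary preceding this section (via \cite[Lemma 1.3]{RS1}) provides the minimal left almost split morphism $\left(\begin{smallmatrix}1\\ p\end{smallmatrix}\right)\colon(P\st{1}\rt P)\rt(P\st{p}\rt P/\text{soc}(P))$, which, having indecomposable source, is in particular an irreducible map. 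Since $(P\st{1}\rt P)$ is indecomposable and $(P\st{p}\rt P/\text{soc}(P))$ is the indecomposable end term of $\delta$, \cite[Proposition VII.1.5]{AuslanreitenSmalo} shows that $(P\st{1}\rt P)$ is a direct summand of $\mathbb{B}$, which is the claim. I do not expect a real obstacle here: all the weight of the argument rests on recognizing that the correct object to plug into Proposition \ref{Prop 4.4} is $A=P/\text{soc}(P)$, after which only routine bookkeeping about tops, socles, superfluous submodules, and the irreducible‑map/mesh machinery already used throughout the paper is needed.
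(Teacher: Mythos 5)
Your proof is correct, and it in fact produces the very same almost split sequence as the paper's proof, only entered from the opposite end. The paper feeds $C=\mathrm{rad}(P)$ into Proposition \ref{Prop 4.4}: the hypothesis on $\mathrm{rad}(P)$ directly yields an almost split sequence in $\mmod\La$ starting at $\mathrm{rad}(P)$, the inclusion $i\colon\mathrm{rad}(P)\rt P$ is recognized as an injective envelope, so $\delta$ begins at $(\mathrm{rad}(P)\st{i}\rt P)$, and the minimal \emph{right} almost split map $(\mathrm{rad}(P)\st{i}\rt P)\rt (P\st{1}\rt P)$ from \cite[Lemma 1.3]{RS1} immediately exhibits $(P\st{1}\rt P)$ as a summand of the middle term. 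You instead feed in $A=P/\mathrm{soc}(P)$, check it is indecomposable non-projective with projective cover $p\colon P\rt P/\mathrm{soc}(P)$, and use the dual half of \cite[Lemma 1.3]{RS1}, namely the minimal \emph{left} almost split map $(P\st{1}\rt P)\rt(P\st{p}\rt P/\mathrm{soc}(P))$, to extract the summand from the end term of $\delta$; since $\tau(P/\mathrm{soc}(P))\simeq\mathrm{rad}(P)$, the two sequences coincide. Both endgames rest on the same standard fact that an irreducible map between indecomposables out of (resp.\ into) an end term of an almost split sequence realizes its target (resp.\ source) as a summand of the middle term, so the difference is purely one of entry point. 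One small dividend of your route is worth noting: the final conclusion only needs $A=P/\mathrm{soc}(P)$ to be indecomposable non-projective, which you establish from $P$ alone, so the standing assumption on $\mathrm{rad}(P)$ enters your argument only through the identification $\tau(P/\mathrm{soc}(P))\simeq\mathrm{rad}(P)$ used to locate the first term of $\delta$; the paper's version makes the role of that assumption more transparent, since it is what produces the almost split sequence starting at $\mathrm{rad}(P)$ in the first place.
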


\begin{proof}
There is an almost split sequence $0 \rt \text{rad}(P)\rt A\rt B\rt 0$ in $\mmod \La$.	Moreover, the assumption implies that the inclusion $i:\text{rad}(P)\rt P$ is an injective envelope. Hence Proposition \ref{Prop 4.4} provides an almost split sequence
$$\xymatrix@1{\delta: \ \ 0\ar[r] & {\left(\begin{smallmatrix} \text{rad}(P)\\ P\end{smallmatrix}\right)}_{i}
	\ar[r]
	&  \mathbb{B}\ar[r] &
	\mathbb{K}\ar[r] & 0 }$$
in $\mathcal{H}$. Regarding the minimal right almost split map
$\left(\begin{smallmatrix}
i\\1
\end{smallmatrix}\right):\left(\begin{smallmatrix}
\text{rad}(P)\\ P
\end{smallmatrix}\right)_i\rt \left(\begin{smallmatrix}
P\\P
\end{smallmatrix} \right)_{1}$, we see that $(P\st{1}\rt P)$ is isomorphic to a direct summand of $\mathbb{B}$, as required.
\end{proof}

%\begin{proposition}\label{Prop 4.10}
%	Let $Q$ be an injective-projective indecomposable module. Then the indecomposable injective object $(Q\rt 0)$ appears only in the middle term of  an %almost split sequence in $\rm{H}(\La)$ ending at $(M\rt 0)$ for some indecomposable module $M$ in $\mmod \La.$
%\end{proposition}
%\begin{proof}
%	We know by Proposition \ref{Prop 4.5} the existence of the following almost split sequence 	$$\xymatrix@1{ \eta: \ \ 0\ar[r] & %{\left(\begin{smallmatrix} 0\\ P\end{smallmatrix}\right)}_{0}
%		\ar[rr]^-{\left(\begin{smallmatrix} 0\\ 1\end{smallmatrix}\right)}
%		& & {\left(\begin{smallmatrix}Q\\ P \end{smallmatrix}\right)}_{f}\ar[rr]^-{\left(\begin{smallmatrix} 1 \\ 0\end{smallmatrix}\right)}& &
%		{\left(\begin{smallmatrix}Q\\0\end{smallmatrix}\right)}_{0}\ar[r]& 0 }.$$ The middle term is indecomposable non-injective. Hence $(Q\rt 0)$ in view %of Proposition \ref{Prop 3.4} lies in the the middle term of the almost split sequence in $\rm{H}(\La)$ ending at the object $(M\rt 0)$, where %$M=\text{Coker}(\nu^{-1}_{\La}P\st{\nu^{-1}_{\La}f}\rt \nu_{\La}^{-1}Q)$. We are done.
%\end{proof}

\begin{proposition}\label{PropositionThemiddle 4.5}
Assume $\La$ is self-injective and $C$ is a $\Lambda$-module. Let the injective envelope $(C\st{e} \rt I)$ of $C$ be an indecomposable non-projective object in $\mathcal{H}$. Let also $\mathbb{B}$ be the middle term of the almost split sequence in $\mathcal{H}$ ending at $(C\st{e}\rt I)$. Then $\mathbb{B}$ is not projective and has no direct summand of the form $(0\rt Q)$  with $Q$ projective.
\end{proposition}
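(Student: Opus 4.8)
The plan is to reduce everything to the explicit shape of $\tau_{\CH}(C\st{e}\rt I)$ provided by Proposition \ref{Prop 3.5}, together with two routine facts about projective objects of $\CH$. First I would pin down $C$ itself. Since $(C\st{e}\rt I)$ is indecomposable, $C$ is indecomposable, for any nontrivial decomposition of $C$ would split its injective envelope; also $C$ is non-injective, since otherwise $e$ is an isomorphism and $(C\st{e}\rt I)\simeq(I\st{1}\rt I)$ is a projective object of $\CH$, contrary to hypothesis; and as $\Lambda$ is self-injective, being non-injective forces $C$ non-projective. Hence $C$ has no projective direct summands and Proposition \ref{Prop 3.5} applies, giving $\tau_{\CH}(C\st{e}\rt I)\simeq(P'\st{p'}\rt\tau\Omega^{-1}(C))$ where $p'\colon P'\rt\tau\Omega^{-1}(C)$ is the projective cover. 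The decisive point I want to record is $P'\neq 0$: since $C$ is indecomposable non-projective and $\Lambda$ is self-injective, $\Omega^{-1}(C)$ is indecomposable non-projective, so $\tau\Omega^{-1}(C)$ is a nonzero module and its projective cover $P'$ is nonzero. Write $0\rt(P'\st{p'}\rt\tau\Omega^{-1}(C))\rt\mathbb{B}\rt(C\st{e}\rt I)\rt 0$ for the almost split sequence in question.

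For the claim that $\mathbb{B}$ is not projective, I would apply the exact ``domain'' functor $\CH\rt\mmod\Lambda$, $(X\st{f}\rt Y)\mapsto X$, to this sequence; it is exact because kernels and cokernels in $\CH$ are formed componentwise. If $\mathbb{B}$ were projective it would be a direct sum of copies of $(0\rt P)$ and $(P\st{1}\rt P)$, whose domains are $0$ and $P$ respectively, so the domain of $\mathbb{B}$ is a projective $\Lambda$-module; the functor then turns the sequence into a projective resolution $0\rt P'\rt(\text{projective})\rt C\rt 0$ of length at most one. Hence $\pd_\Lambda C\le 1$, and since over a self-injective algebra a module of finite projective dimension is projective, $C$ would be projective, a contradiction.

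For the claim that $\mathbb{B}$ has no direct summand of the form $(0\rt Q)$ with $Q$ projective, suppose to the contrary such a summand exists; we may take $Q$ indecomposable projective. Then the composite $(0\rt Q)\hookrightarrow\mathbb{B}\rt(C\st{e}\rt I)$ is irreducible, and since $(C\st{e}\rt I)$ is non-projective, \cite[Proposition VII.1.5]{AuslanreitenSmalo} yields an irreducible map $\tau_{\CH}(C\st{e}\rt I)\rt(0\rt Q)$. But $(0\rt Q)$ is a projective object of $\CH$, so every irreducible map into it factors through the radical inclusion $(0\rt\rad Q)\hookrightarrow(0\rt Q)$; therefore $\tau_{\CH}(C\st{e}\rt I)$ is a direct summand of $(0\rt\rad Q)$, which forces its domain to vanish. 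This contradicts $P'\neq 0$ established above.

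The only steps that are not purely formal are the identification of $\tau_{\CH}(C\st{e}\rt I)$ — exactly the content borrowed from Proposition \ref{Prop 3.5}, and the source of the non-vanishing $P'\neq 0$ — and the small bookkeeping that the radical of $(0\rt Q)$ inside $\CH$ equals $(0\rt\rad Q)$, a fact already used in the proof of Proposition \ref{Prop 4.8}. I do not expect a genuine obstacle beyond keeping track of these two inputs; the remaining verifications are routine.
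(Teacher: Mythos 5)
Your proof is correct and follows essentially the same route as the paper's: both rest on the identification $\tau_{\CH}(C\st{e}\rt I)\simeq(P'\st{p'}\rt\tau\Omega^{-1}(C))$ from Proposition \ref{Prop 3.5}, dispose of the $(0\rt Q)$ summands by the radical argument of Proposition \ref{Prop 4.8} (your extra care in noting $P'\neq 0$ is a welcome explicit justification of a point the paper leaves implicit), and derive the non-projectivity of $\mathbb{B}$ from the almost split sequence by forcing $C$ to be projective/injective — you via $\pd_\La C\le 1$ on first components, the paper via $\Omega^{-1}(C)$ being projective, which amounts to the same contradiction over a self-injective algebra.
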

\begin{proof}
The proof of the second claim goes ahead using the argument of Proposition \ref{Prop 4.8}. On the other hand, we know by Proposition \ref{Prop 3.5}  that  $\tau_{\mathcal{H}}\left(\begin{smallmatrix}
	C\\ I
	\end{smallmatrix}\right)_e= \left(\begin{smallmatrix}
	P\\ \tau\Omega^{-1}(C)
	\end{smallmatrix}\right)_p$, where $p$ is the projective cover of $\tau\Omega^{-1}(C)$. Now if $\mathbb{B}$ would be projective, then the aforementioned almost split sequence implies that $\Omega^{-1}(C)$ is projective, i.e. $C$ will then be injective which is denied because $(C\st{e}\rt I)$ was supposed to be non-projective in $\mathcal{H}$.
\end{proof}

\section{ Morphism categories of finite representation type}
The results of this section involve a self-injective and non-semisimple Artin algebra $\Lambda$. As an application of the results we obtained so far in previous parts of the paper, we will establish in this section a connection between
representation-finiteness of $\mathcal{H}$ and the so-called Dynkin diagrams.

\vspace{.1 cm}

For a module $M$ in $\mmod \La$, the $\tau$-orbit of $M$ is the set of all possible modules $\tau^n M$, $n \in \mathbb{Z}$. Then $M$ is called $\tau$-periodic if $\tau^m M\simeq M$ for some $m\geqslant 1.$  We refer to \cite{L} for the notion of the Auslander-Reiten quiver $\Gamma_{\CC}$ of a Krull-Schmidt category $\CC$ and relevant combinatorial background, particularly the notion of valued translation quivers. We suffice to recall that a valued translation quiver is represented by a triple $(\Gamma, \rho, \nu)$,  where $\Gamma$ is a quiver without multiple arrows, $\nu$ is a valuation of the arrows  , and $\rho$ is the translation. Also recall that a vertex $x$ in a valued translation quiver $(\Gamma, \rho, \nu)$ is called {\it stable} if $\rho^ix$ is defined for every integer  $i$.

\vspace{.1 cm}

For $i \in \mathbb{Z}$ different from $\pm 1$, we denote the $i$-th Auslander-Reiten translation of an object $\mathbb{X}$ in $\mathcal{H}$ by $\tau^{i}_{\mathcal{H}}\mathbb{X}$.

\begin{proposition}\label{Prop 5.1}
Assume $\La$ is self-injective of finite representation type. Any object $\mathbb{X}$ in $\mathcal{H}$ of either of the following types is $\tau_{\mathcal{H}}$-periodic.
\begin{itemize}
\item [$(1)$] $\mathbb{X}$ is either of $(0\rt C)$, $(C\st{1}\rt C)$, or $(C\rt 0)$ where $C$ is an indecomposable non-projective $\Lambda$-module.
\item [$(2)$] $\mathbb{X}$ is indecomposable non-projective-injective of the form $\mathbb{X}=(P\st{f}\rt Q)$ with $P, Q$ projective $\Lambda$-modules.
\item[$(3)$] $\mathbb{X}=(P\st{p}\rt C)$, where $P$ is the projective cover of an indecomposable non-projective $\Lambda$-module $C$.
\item [$(4)$] $\mathbb{X}=(C\st{e}\rt I)$, where $I$ is the injective envelope of an indecomposable non-projective $\Lambda$-module $C$.
\end{itemize}
\end{proposition}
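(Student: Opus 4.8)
The plan is to reduce each case to the observation that, since $\La$ is self-injective of finite representation type, the stable category $\underline{\mmod}\La$ is periodic in the sense that for every indecomposable non-projective $\La$-module $M$ the orbit $\{\tau_\La^n M\}_{n\in\Z}$ is finite; equivalently, $\tau_\La^m M\simeq M$ for some $m\geqslant 1$ depending on $M$ (and in fact a uniform $m$ works because $\mmod\La$ has only finitely many indecomposables). This is the engine behind all four cases, and the point of the proof is simply to transport this periodicity through the explicit formulas for $\tau_{\mathcal H}$ established in Section~2. So the first step is to record this fact about $\tau_\La$ and to observe that the syzygy and cosyzygy functors $\Omega_\La$, $\Omega_\La^{-1}$ and the Nakayama functor $\nu_\La$ are equivalences on $\underline{\mmod}\La$ (self-injectivity), hence also preserve finiteness of orbits and commute with $\tau_\La$ up to the relation $\tau_\La\simeq\nu_\La\Omega_\La^2$.

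Next I would treat the cases one at a time, each time applying the relevant proposition from Section~2 to pin down $\tau_{\mathcal H}\mathbb X$, and then iterating. For case~(1): by Lemma~\ref{Lemma 5.2} and its dual, together with Proposition~\ref{Prop 3.4}, the objects $(0\rt C)$, $(C\st{1}\rt C)$, and $(C\rt 0)$ have Auslander--Reiten translates (or a small iterate thereof) expressible through $\tau_\La C$, $\Omega_\La$, and $\nu_\La$ applied to $C$; since each of these operations preserves finiteness of the orbit, the $\tau_{\mathcal H}$-orbit of $\mathbb X$ is finite, and finiteness of an orbit under the (eventually defined) map $\tau_{\mathcal H}$ forces periodicity. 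For case~(2): Proposition~\ref{Prop 3.6} gives $\tau_{\mathcal H}(P\st f\rt Q)=(0\rt\tau_\La M)$ where $M=\Coker f$; applying $\tau_{\mathcal H}$ once more and invoking Corollary~\ref{Prop 5.4}(c) (the equivalence $\mathcal E\approx\mathcal F$) returns us to an object of type~(2), and the orbit is shuttled by $\tau_\La$ on $M$, which is periodic — so $(P\st f\rt Q)$ is $\tau_{\mathcal H}$-periodic. For case~(3): Proposition~\ref{Prop 4.4} identifies $\tau_{\mathcal H}(P\st p\rt C)$ with an object of type~(4), namely $(\tau_\La C\st e\rt I)$ with $e$ an injective envelope; so it suffices to handle case~(4) and then note that cases (3) and (4) interchange under $\tau_{\mathcal H}^{\pm1}$ via Corollary~\ref{Prop 5.4}(a). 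For case~(4): since $\La$ is self-injective, Proposition~\ref{Prop 3.5} gives $\tau_{\mathcal H}(C\st e\rt I)\simeq(P\st p\rt\tau_\La\Omega_\La^{-1}(C))$, which is of type~(3) with underlying module $\tau_\La\Omega_\La^{-1}(C)$; hence $\tau_{\mathcal H}^2$ sends $(C\st e\rt I)$ to an object of the same type~(4) whose underlying module is obtained from $C$ by applying $\tau_\La$, $\Omega_\La^{-1}$, injective envelope/cokernel — all orbit-finiteness-preserving — so the orbit is finite and $\mathbb X$ is periodic.

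The one genuinely delicate point, which I would address carefully, is the logical step from ``finite $\tau_{\mathcal H}$-orbit'' to ``$\tau_{\mathcal H}$-periodic'': one must know that $\tau_{\mathcal H}\mathbb X$, $\tau_{\mathcal H}^2\mathbb X,\dots$ are all defined, i.e. remain non-projective at every stage, and that repeated application never produces a projective (where $\tau_{\mathcal H}$ is undefined) or an object outside the family under consideration. This is where the structural propositions of Sections 3--4 earn their keep: each of them not only computes $\tau_{\mathcal H}$ but also certifies that the output is again non-projective and of a controlled shape (e.g. Proposition~\ref{Prop 3.6} lands in $(0\rt\tau_\La M)$ with $\tau_\La M$ non-projective; Propositions~\ref{Prop 3.5} and~\ref{Prop 4.4} land in objects with non-projective underlying modules; in case~(1) one uses that $C$ non-projective keeps $\tau_\La C$, $\Omega_\La C$, $\nu_\La C$ non-projective since $\La$ is self-injective). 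Having guaranteed that the orbit stays inside a set that is in bijection with a subset of the (finite) set of indecomposable $\La$-modules, the pigeonhole principle yields $\tau_{\mathcal H}^a\mathbb X\simeq\tau_{\mathcal H}^b\mathbb X$ for some $a<b$, and then applying $\tau_{\mathcal H}^{-1}$ (which is defined on non-injective objects, a condition one again checks holds along the orbit, using self-injectivity of $\La$ so that ``non-injective'' and ``non-projective'' coincide for the relevant $\La$-modules) $a$ times gives $\mathbb X\simeq\tau_{\mathcal H}^{\,b-a}\mathbb X$, as desired.

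I expect the main obstacle to be bookkeeping rather than conceptual: one must verify, in each of the four cases, that every iterate of $\tau_{\mathcal H}$ stays in the domain of $\tau_{\mathcal H}$ and in a family closed under the translate, so that the ``finite orbit $\Rightarrow$ periodic'' argument applies without interruption. Once the closure statements of Propositions~\ref{Prop 3.4}, \ref{Prop 3.5}, \ref{Prop 3.6}, and~\ref{Prop 4.4} and Corollary~\ref{Prop 5.4} are marshalled correctly, the periodicity is immediate from finiteness of $\ind\mbox{-}\La$.
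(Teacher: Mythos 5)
Your proposal follows essentially the same route as the paper: compute $\tau_{\mathcal H}$ of each type via Lemma \ref{Lemma 5.2} and Propositions \ref{Prop 3.4}, \ref{Prop 3.5}, \ref{Prop 3.6}, \ref{Prop 4.4}, observe that iterating cycles through the listed families with the underlying module transformed by composites such as $\tau\nu\tau^2$ (resp.\ $\tau\Omega^{-1}\tau$), and conclude periodicity from finiteness of $\mathrm{ind}\mbox{-}\La$. The only quibble is a miscount in case (2) — the orbit of $(P\st{f}\rt Q)$ returns to a type-(2) object after four applications of $\tau_{\mathcal H}$, not two, passing through all three shapes of case (1) — but this does not affect the reduction or the conclusion.
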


\begin{proof}
$(1)$ We only treat the case where $\mathbb{X}=(0\rt C)$ since the proof of the other types is similar. Using Lemma \ref{Lemma 5.2} we get:
$$\tau_{\mathcal{H}}\left(\begin{smallmatrix}
0\\C
\end{smallmatrix}\right)=\left(\begin{smallmatrix}
\tau C\\ \tau C
\end{smallmatrix}\right)_1 \ \  \text{and} \ \ \tau^{2}_{\mathcal{H}}\left(\begin{smallmatrix}
0\\ C
\end{smallmatrix}\right)=\tau_{\mathcal{H}}\left(\begin{smallmatrix}
\tau C\\ \tau C
\end{smallmatrix}\right)_1=\left(\begin{smallmatrix}
\tau^2 C\\ 0
\end{smallmatrix}\right). $$
Moreover, by Propositions \ref{Prop 3.4} and  \ref{Prop 3.6},
$$\tau^{3}_{\mathcal{H}}\left(\begin{smallmatrix}
0\\ C
\end{smallmatrix}\right)=\tau_{\mathcal{H}}\left(\begin{smallmatrix}
\tau^2 C\\ 0
\end{smallmatrix}\right)=\left(\begin{smallmatrix}
\nu P_1\\ \nu P_0
\end{smallmatrix}\right)_{\nu(f)} \ \  \text{and} \ \ \tau^{4}_{\mathcal{H}}\left(\begin{smallmatrix}
0\\ C
\end{smallmatrix}\right)=\tau_{\mathcal{H}}\left(\begin{smallmatrix}
\nu P_1\\ \nu P_0
\end{smallmatrix}\right)_{\nu(f)}=\left(\begin{smallmatrix}
0\\ \tau\nu\tau^2 C
\end{smallmatrix}\right), $$
where $P_1\st{f}\rt P_0\rt \tau^2 C\rt 0$ is a minimal projective presentation. Note that to compute the fourth translation, the exact sequence $0 \rt \tau^3 C\rt \nu P_1\st{\nu(f)}\rt \nu P_0\rt \nu \tau^2 C\rt 0$ has been applied.

\vspace{.1 cm}

Set now $A=\tau\nu\tau^2$.
Then iterating the above process one obtains for positive integers $i=4m+k$ that
\begin{itemize}
	\item if $k=0$, $\tau^{i}_{\mathcal{H}}\left(\begin{smallmatrix}
	0\\ C
	\end{smallmatrix}\right)=\left(\begin{smallmatrix}
	0\\ A^mC
	\end{smallmatrix}\right),$
	\item if $k=1$, $\tau^{i}_{\mathcal{H}}\left(\begin{smallmatrix}
	0\\ C
	\end{smallmatrix}\right)=\left(\begin{smallmatrix}
	\tau A^m C\\ \tau A^m C
	\end{smallmatrix}\right)_{1},$
	\item if $k=2$, $\tau^{i}_{\mathcal{H}}\left(\begin{smallmatrix}
	0\\ C
	\end{smallmatrix}\right)=\left(\begin{smallmatrix}
	\tau^2 A^m C\\ 0
	\end{smallmatrix}\right),$
	\item if $k=3$, $\tau^{i}_{\mathcal{H}}\left(\begin{smallmatrix}
0\\ C
	\end{smallmatrix}\right)=\left(\begin{smallmatrix}
	\nu P^{m+1}_1\\ \nu P^{m+1}_0
	\end{smallmatrix}\right)_{\nu f^m},$
\end{itemize}
where $P^{m+1}_1\st{f^m}\rt P^{m+1}_0\rt \tau^2 A^m C\rt 0$ is a minimal projective presentation.

\vspace{.1 cm}

Furthermore, computing $\tau^{i}_{\mathcal{H}}$ for integers $i<0$, one obtains
$$\tau^{-1}_{\mathcal{H}}\left(\begin{smallmatrix}
0\\C
\end{smallmatrix}\right)=\left(\begin{smallmatrix}
Q_1\\ Q_0
\end{smallmatrix}\right)_g,$$
where $Q_1\st{g}\rt Q_0\rt \tau^{-1} C\rt 0$ is a minimal projective presentation, and

$$\tau^{-2}_{\mathcal{H}}\left(\begin{smallmatrix}
0\\ C
\end{smallmatrix}\right)=\tau^{-1}_{\mathcal{H}}\left(\begin{smallmatrix}
 Q_1\\ Q_0
\end{smallmatrix}\right)_g=\left(\begin{smallmatrix}
\nu^{-1}\tau^{-1}C\\ 0
\end{smallmatrix}\right), \quad\tau^{-3}_{\mathcal{H}}\left(\begin{smallmatrix}
0\\ C
\end{smallmatrix}\right)=\tau^{-1}_{\mathcal{H}}\left(\begin{smallmatrix}
\nu^{-1}\tau^{-1}C\\ 0
\end{smallmatrix}\right)=\left(\begin{smallmatrix}
\tau^{-1}\nu^{-1}\tau^{-1}C\\ \tau^{-1} \nu^{-1}\tau^{-1}C
\end{smallmatrix}\right)_1, $$
$$\tau^{-4}_{\mathcal{H}}\left(\begin{smallmatrix}
	0\\ C
\end{smallmatrix}\right)=\tau^{-1}_{\mathcal{H}}\left(\begin{smallmatrix}
	\tau^{-1}\nu^{-1}\tau^{-1}C\\ \tau^{-1}\nu^{-1}\tau^{-1}C
\end{smallmatrix}\right)_{1}=\left(\begin{smallmatrix}
	0\\ \tau^{-2} \nu^{-1}\tau^{-1} C
\end{smallmatrix}\right). $$
Hence by repeating this process we get for positive integers  $-i=4m+k$ that
\begin{itemize}
		\item if $k=0$, $\tau^{i}_{\mathcal{H}}\left(\begin{smallmatrix}
	0\\ C
	\end{smallmatrix}\right)=\left(\begin{smallmatrix}
0\\ A^{-m}C
	\end{smallmatrix}\right)$,
	
	\item if $k=1$, $\tau^{i}_{\mathcal{H}}\left(\begin{smallmatrix}
	0\\ C
	\end{smallmatrix}\right)=\left(\begin{smallmatrix}
	Q^m_1\\ Q^m_0
	\end{smallmatrix}\right)_{g^m}$,

	\item if $k=2$, $\tau^{i}_{\mathcal{H}}\left(\begin{smallmatrix}
	0\\ C
	\end{smallmatrix}\right)=\left(\begin{smallmatrix}
\nu^{-1}\tau^{-1}A^{-m}C\\ 0
	\end{smallmatrix}\right)$,
	
	\item if $k=3$, $\tau^{i}_{\mathcal{H}}\left(\begin{smallmatrix}
	0\\ C
	\end{smallmatrix}\right)=\left(\begin{smallmatrix}
\tau^{-1}\nu^{-1}\tau^{-1} A^{-m} C	\\ \tau^{-1}\nu^{-1}\tau^{-1} A^{-m} C
	\end{smallmatrix}\right)_{1}$,
\end{itemize}
where $Q^{m+1}_1\st{g^m}\rt Q^{m+1}_0\rt \tau^{-1} A^{-m}C\rt 0$ is a minimal projective presentation. Based on the above computations we  deduce that $(0\rt C)$ is $\tau_{\mathcal{H}}$-periodic if and only if $C$ is $A$-periodic in the sense that there is some $l>0$ with $A^lC\simeq C$. But the latter holds since the set $\{A^jC\}_{j \in \mathbb{Z}}$ contains only finitely many indecomposable modules up to isomorphism according to the hypothesis.

\vspace{.2 cm}

\noindent $(2)$ This follows by mimicking the argument above; so we skip it.

\vspace{.2 cm}

\noindent $(3)$ We compute the $i$-th Auslander-Reiten translation of $\mathbb{X}$ for $i>0$. Proposition \ref{Prop 4.4} yields
$$\tau_{\mathcal{H}}\left(\begin{smallmatrix}
P\\ C
\end{smallmatrix}\right)_p=\left(\begin{smallmatrix}
\tau C	\\ I
\end{smallmatrix}\right)_{e},$$
where $\tau C\st{e}\rt I$ is the injective envelope.
Proposition \ref{Prop 3.5}  then gives

$$\tau^{2}_{\mathcal{H}}\left(\begin{smallmatrix}
P\\ C
\end{smallmatrix}\right)_p=\tau_{\mathcal{H}}\left(\begin{smallmatrix}
\tau C\\ I
\end{smallmatrix}\right)_e=\left(\begin{smallmatrix}
Q	\\ \tau\Omega^{-1}\tau C
\end{smallmatrix}\right)_{q}, $$
where $Q\st{q}\rt \tau\Omega^{-1}\tau C$ is the projective cover. Set $B=\tau\Omega^{-1}\tau$. Then, iterating this process for positive integers $i=2m+k$ it is easy to verify that
\begin{itemize}
	\item if $k=0$, $\tau^{i}_{\mathcal{H}}\left(\begin{smallmatrix}
	P\\ C
	\end{smallmatrix}\right)_p=\left(\begin{smallmatrix}
	Q^m	\\ B^m C
	\end{smallmatrix}\right)_{q^m},$
	where $q^m: Q^m\rt B^mC$ is the projective cover.
	\item if  $k=1$, $\tau^{i}_{\mathcal{H}}\left(\begin{smallmatrix}
	P\\ C
	\end{smallmatrix}\right)_p=\left(\begin{smallmatrix}
\tau B^m C	\\ I^{m+1}
	\end{smallmatrix}\right)_{e^m},$
	where $e^m: \tau B^m C\rt I^{m+1}$ is the injective envelope.
\end{itemize}

\vspace{.2 cm}

Using the dual process, it follows for positive integers $-i=2m+k$ that
\begin{itemize}
		\item if  $k=0$, $\tau^{i}_{\mathcal{H}}\left(\begin{smallmatrix}
	P\\ C
	\end{smallmatrix}\right)_p=\left(\begin{smallmatrix}
	T^m	\\ B^{-m}C
	\end{smallmatrix}\right)_{r^m},$
	where $r^m:T^m\rt B^{-m}C$ is the projective cover.
	
	\item if $k=1$, $\tau^{i}_{\mathcal{H}}\left(\begin{smallmatrix}
	P\\ C
	\end{smallmatrix}\right)_p=\left(\begin{smallmatrix}
\Omega\tau^{-1}B^{-m}C	\\J^m
	\end{smallmatrix}\right)_{s^m},$
	where $s^m:\Omega\tau^{-1} B^{-m}C\rt J^m$ is the injective envelope.
	\end{itemize}
Note that there is an injection, at the level of objects, from $\text{ind}\mbox{-}\La$ to $\text{ind}\mbox{-}\rm{H}(\La)$ via taking projective covers (or injective envelopes). Hence, based on the above computations and the aforementioned injection, we deduce that $(P\st{p}\rt C)$ is  $\tau_{\mathcal{H}}$-periodic if and only if $C$ is $B$-periodic. However, this is the case following the assumptions of the proposition.

\vspace{.1 cm}

\noindent $(4)$ This is dual to $(3)$.
\end{proof}

%Because of the connection between $\rm{H}(\La)$ and $\mmod (\mmod \La)$ (as explained in Subsection \ref{Subsection 2.3}) we have the following corollary %as  an application of the above proposition in view of Proposition \ref{Prop 2.7}.
%\begin{proposition}
%	Assume $\La$ is self-injective of finite representation type. Set $\Gamma$ to be the Auslander algebra of $\La$. Any functor $F$ in $\mmod (\mmod \La)$  %of the following forms
%	\begin{itemize}
%		\item [$(1)$] $F$ has a minimal projective presentation of the form
%		$$ (-, P)\st{(-, p)}\rt (-, C)\rt F\rt 0,$$
%		where $p:P\rt C$ is the  projective cover of an indecomposable non-projective $C.$
%		\item [$(2)$] $F$ has a minimal projective resolution of the form
%		$$0 \rt (-, C)\st{(-, e)}\rt (-, I)\rt F\rt 0,$$
%		where $e:C\rt I$ is the  injective envelope of an indecomposable non-projective $C.$
%		\end{itemize}
%	is $\tau_{\Gamma}$-periodic (note that here we recall the identification $\mmod (\mmod \La)$ and $\mmod \Gamma$).
%\end{proposition}
%We point out that the functors in the case $(1)$ of the above proposition are corresponded to the projective functors in $\mmod %(\underline{\rm{mod}}\mbox{-}\La)$.\\

Below, for the sake of simplicity, we do not distinguish between indecomposable objects and vertices of the Auslander-Reiten quiver $\Gamma_{\mathcal{H}}$ of $\mathcal{H}$. We denote by $\Gamma^s_{\mathcal{H}}$ the translation quiver obtained from $\Gamma_{\mathcal{H}}$ by removing all vertices corresponding to  objects which are either indecomposable projective  or indecomposable injective and the arrows attached to them. It should be pointed out that in the literature, the symbol $\Gamma^s$ is usually used to denote the translation quiver obtained by removing vertices that are simultaneously projective and injective.

\begin{lemma}\label{Lemma 6.5}
	Assume $\La$ is self-injective.
	\begin{itemize}
		\item [$(a)$]Then every vertex of $\Gamma^s_{\mathcal{H}}$ is stable.
		\item [$(b)$] Assume further that $\La$ is indecomposable and $\mathcal{H}$ is of finite representation type. Then the valued translation quiver $\Gamma^s_{\mathcal{H}}$ is connected.
		\end{itemize}
\end{lemma}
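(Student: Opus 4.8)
The plan is to handle the two parts separately, both exploiting the self-injectivity of $\La$ together with the structural results of Sections $2$--$4$. For part $(a)$, recall that a vertex of $\Gamma_{\mathcal{H}}$ fails to be stable precisely when some forward or backward $\tau_{\mathcal{H}}$-iterate hits a projective or an injective object of $\mathcal{H}$. Since we have already deleted all indecomposable projective and injective vertices, what must be shown is that $\tau_{\mathcal{H}}$ and $\tau_{\mathcal{H}}^{-1}$, restricted to the remaining vertices, never produce a projective or injective object; equivalently, $\Gamma^s_{\mathcal{H}}$ is closed under $\tau_{\mathcal{H}}^{\pm 1}$. First I would record the complete list of indecomposable projective--injective objects of $\mathcal{H}$, namely $(0\rt P)$, $(P\st{1}\rt P)$, $(I\rt 0)$, $(I\st{1}\rt I)$ with $P$ projective and $I$ injective; since $\La$ is self-injective, ``projective'' and ``injective'' coincide in $\mmod\La$, so these four families already exhaust \emph{all} projective and all injective indecomposables of $\mathcal{H}$. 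The key point is then: if $\mathbb{X}$ is an indecomposable in $\mathcal{H}$ that is neither projective nor injective, is $\tau_{\mathcal{H}}\mathbb{X}$ again neither projective nor injective? A projective object has no non-split sequence ending at it and an injective object has no non-split sequence starting at it, so $\tau_{\mathcal{H}}\mathbb{X}$ can be projective only if $\mathbb{X}$ itself is injective, and $\tau_{\mathcal{H}}^{-1}\mathbb{X}$ can be injective only if $\mathbb{X}$ is projective — both excluded. Hence every vertex surviving in $\Gamma^s_{\mathcal{H}}$ has $\rho^i$ defined for all $i\in\Z$, which is exactly stability.

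For part $(b)$ I would argue by contradiction: suppose $\Gamma^s_{\mathcal{H}}$ decomposes as a disjoint union of two non-empty (sub-)translation quivers $\Gamma_1\sqcup\Gamma_2$ with no arrows between them. The first reduction is to note that, since $\mathcal{H}$ is of finite representation type, each connected component of $\Gamma_{\mathcal{H}}$ (hence of $\Gamma^s_{\mathcal{H}}$) is finite, and by Proposition \ref{Prop 5.1} all the ``standard'' objects $(0\rt C)$, $(C\st 1\rt C)$, $(C\rt 0)$, $(P\st p\rt C)$, $(C\st e\rt I)$ and the indecomposable $(P\st f\rt Q)$ with $P,Q$ projective are $\tau_{\mathcal{H}}$-periodic, so they all live in $\Gamma^s_{\mathcal{H}}$. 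The strategy is to show these objects, together with the irreducible maps between them supplied by Lemma \ref{Lemma 5.2} and Propositions \ref{Prop4.1}, \ref{Prop 4.3}, \ref{Prop 4.4}, \ref{Prop 4.5}, \ref{Prop 4.6}, \ref{Proposition 4.7}, already form a \emph{connected} sub-translation-quiver of $\Gamma^s_{\mathcal{H}}$; and then to show every other indecomposable is connected to one of them. Concretely: Lemma \ref{Lemma 5.2}$(1)$--$(2)$ and Proposition \ref{Prop 4.3} give meshes linking $(0\rt C)$, $(AA_1)$, $(AB_f)$, $(A'0)$ and $(B'A_{g'})$-type objects; Proposition \ref{Prop 4.6} links every $(A\rt 0)$ to an indecomposable of the form $\tau_{\mathcal{H}}(B\st g\rt C)$ sitting between $(AA_1)$ and it; Propositions \ref{Prop 4.4} and \ref{Proposition 4.7} link $(P\st p\rt C)$ and $(C\st e\rt I)$ and the $(P\st f\rt Q)$-objects into the same web. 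The decisive input is that $\La$ is \emph{indecomposable}: the ordinary stable Auslander--Reiten quiver of $\mmod\La$ is connected (this is the standard fact for indecomposable self-injective algebras of finite type), and one transports connectedness up to $\mathcal{H}$ along the maps $C\mapsto(0\rt C)$, $C\mapsto(C\rt 0)$, $C\mapsto(C\st 1\rt C)$, which by Lemma \ref{Lemma 5.2} are joined to each other by meshes over a common vertex $(\tau C\,\,\tau C)_1$. Any indecomposable of $\mathcal{H}$ that is not projective or injective has a non-split sequence ending at it whose middle term, by the structure results of Section $4$, contains a summand of one of the recognized types (or one closer to them), so an induction on a suitable length function pushes every vertex into the connected core.

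The main obstacle, I expect, is part $(b)$, and specifically the inductive step that \emph{every} surviving indecomposable of $\mathcal{H}$ — not just the listed ``standard'' ones — lies in the connected component of the core. The cleanest route is probably not a direct induction but an appeal to the following: because $T_2(\La)$ is $1$-Gorenstein and $\La$ is self-injective of finite type, $\mathcal{H}\simeq\mmod T_2(\La)$ has no oriented cycles avoiding projectives-injectives issues only in a controlled way, so one can use the general principle (as in \cite{L}) that a finite stable translation quiver all of whose vertices are periodic and which admits \emph{one} connected ``slice'' spanning a full $\tau$-orbit of every simple-related object must be connected once the underlying algebra is connected. If that black box is unavailable, the fallback is to verify directly, using Propositions \ref{Prop 4.6}, \ref{Proposition 4.7}, \ref{Prop 4.8}, \ref{PropositionThemiddle 4.5}, that removing a projective or injective vertex from a mesh never disconnects it — i.e. each such vertex has, after deletion, at least one surviving neighbour on each side lying in the core — which is essentially a case-check over the four families of projective--injective objects of $\mathcal{H}$ listed in part $(a)$. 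I would present part $(a)$ in full and organize part $(b)$ around the indecomposability of $\La$ feeding connectedness of $\Gamma^s_{\mmod\La}$, then the transport lemma.
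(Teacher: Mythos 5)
The main problem is in part $(a)$. Your argument rests on the implication ``$\tau_{\mathcal{H}}\mathbb{X}$ projective $\Rightarrow$ $\mathbb{X}$ injective'', which you derive from the general facts that a projective admits no non-split sequence ending at it and an injective none starting at it. That is a non sequitur: those facts only say that $\tau$ of a projective and $\tau^{-1}$ of an injective are undefined; they place no restriction on whether $\tau_{\mathcal{H}}\mathbb{X}$ can be projective for a non-injective $\mathbb{X}$. Indeed the implication is false for general Artin algebras: for the path algebra of linearly oriented $A_3$ the almost split sequence $0\rt P_3\rt P_2\rt S_2\rt 0$ gives $\tau S_2=P_3$ projective while $S_2$ is neither projective nor injective, so $S_2$ is an unstable vertex of the punctured quiver. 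What makes the claim true in the present setting is self-injectivity of $\La$ \emph{together with an explicit computation}: by Proposition \ref{Prop 4.5} (equivalently, by Construction \ref{projective Cover} applied to $D_{\CH}(0\rt P)$) one has $\tau^{-1}_{\mathcal{H}}(0\rt P)=(\nu^{-1}P\rt 0)$, which is injective in $\mathcal{H}$, and $(P\st{1}\rt P)$ is already injective; hence the $\tau_{\mathcal{H}}$-orbit of a projective vertex of $\mathcal{H}$ dies after at most one further step, so an unstable vertex would itself have to be projective or injective. This computation is the actual content of part $(a)$, and your proposal replaces it with a formal argument that does not hold.

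For part $(b)$, your primary route (connectedness of the stable Auslander--Reiten quiver of $\mmod\La$ transported along $C\mapsto(0\rt C)$, followed by ``an induction on a suitable length function'') is not carried out and, as stated, cannot be: the structure results of Section 4 describe middle terms only for the special classes of objects treated there, so they do not furnish the inductive step for an arbitrary indecomposable of $\mathcal{H}$. Your fallback is essentially the paper's argument, but it is missing its starting point: one first needs connectedness of the \emph{full} quiver $\Gamma_{\mathcal{H}}$, which the paper deduces from indecomposability of $T_2(\La)$ via \cite[\S VII, Theorem 2.1]{AuslanreitenSmalo}; only then can an arbitrary walk between two vertices of $\Gamma^s_{\mathcal{H}}$ be repaired at each projective or injective vertex using Propositions \ref{Prop 4.6}, \ref{Proposition 4.7} and \ref{Prop 4.8}, together with the observation (from non-semisimplicity of $\La$ and the fact that irreducible maps are mono or epi) that two consecutive vertices of the walk cannot both be projective or injective. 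So the outline of $(b)$ is recoverable, but $(a)$ as written contains a genuine logical gap.
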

\begin{proof}
$(a)$	Assume,  to the contrary, that there is a vertex  $\mathbb{X}$ in $\Gamma^s_{\mathcal{H}}$ that  is not stable, i.e. for some $m \in \mathbb{Z}$, $\tau^ m _{\mathcal{H}}\mathbb{X}$ is not well-defined. Taking  $m>0$  does not harm the generality. Since $\tau^ m _{\mathcal{H}}\mathbb{X}$ is not defined, $\tau^ {m-1} _{\mathcal{H}}\mathbb{X}$ should be projective. Hence it will be of either forms $(0\rt P)$ or $(P\st{1}\rt P)$ for a projective indecomposable $\Lambda$-module $P$. But since $\Lambda$ is self-injective, an application of Proposition \ref{Prop 4.5} reveals that $\mathbb{X}$ has to be projective (possibly zero). This is impossible since by definition $\Gamma^s_{\mathcal{H}}$ does not contain such a vertex.

\vspace{.1 cm}

\noindent $(b)$ Assume $\La$ is indecomposable. Then,  it is not difficult to see that the $T_2$-extension $T_2(\Lambda)$ of $\Lambda$  is also indecomposable. Hence by \cite[\S VII, Theorem 2.1]{AuslanreitenSmalo} we observe that $\Gamma_{\mathcal{H}}$ has only one component.
Now take two vertices  $x$ and $y$ of $\Gamma^{s}_{\CH}.$ The connectedness of $\Gamma_{\CH}$ provides us with a walk $y=x_0\longleftrightarrow x_1 \longleftrightarrow \cdots \longleftrightarrow x_t=x $ where  $x_d\longleftrightarrow x_{d+1}$ means that there is either an arrow $x_d\rt x_{d+1}$ or, vice versa, $x_{d+1}\rt x_d$ in $\Gamma_{\CH}$.  If the $x_i$ are all non-projective and non-injective, there is nothing to prove. Assume there is $j$ such that $1\leq j\leq t-1$ and $x_j$ is projective or injective. Then the following cases might be distinguished:
\begin{itemize}
\item[$(1)$] $x_{j-1}\rt x_j \rt x_{j+1}$
\item[$(2)$]$x_{j-1}\leftarrow x_j \rt x_{j+1}$
\item[$(3)$]$x_{j-1} \leftarrow x_j \leftarrow x_{j+1}$
\item[$(4)$]$x_{j-1}\rt x_j\leftarrow x_{j+1}$
\end{itemize}
Evidently, we only need to treat the first two ones. Note that since every irreducible map is either an epimorphism or a monomorphism, the non-semisimplicity of $\Lambda$ implies that neither  $x_{j-1}$ nor $x_{j+1}$ can be projective.

\vspace{.1 cm}

Assume $(1)$ occurs. Then, clearly, $x_{j-1}=\tau_{\CH}\,x_{j+1}$ and one infers the existence of the almost split sequence $0 \rt x_{j-1} \rt \mathbb{X}\oplus x_{j}\rt x_{j+1}\rt 0$ in $\CH$.  By the hypothesis, $x_j$  can be of either types $(P\st{1}\rt P)$, $(P\rt 0)$, or $(0\rt P)$ for an indecomposable projective $\Lambda$-module $P$. Note that in the first case, by \cite[Lemma 1.3]{RS1}, $x_j$ is the  middle term of an almost split sequence that either starts from  $({\rm rad}(P)\st{i}\rt P)$ or terminates at $(P\st{p}\rt P/{\rm soc }(P))$.  Hence  one may respectively apply Propositions \ref{Prop 4.8}, \ref{Prop 4.6}, or \ref{Proposition 4.7} to these three possibilities to deduce that $\mathbb{X} $ has an indecomposable non-projective and non-injective  direct summand $\mathbb{X}'$. Therefore,  there is a walk $x_{j-1}\rt \mathbb{X}' \rt x_{j+1}$ in $\Gamma^s_{\mathcal{H}}$ which remedies the punctured walk.

\vspace{.1 cm}

Let now $(2)$ be the case. Setting $z=\tau_{\CH}x_{j+1}$, there exists a walk $z\rt x_j\rt x_{j+1}$ in $\Gamma_{\CH}$. Moreover, the former case gives us a walk $z \rt \mathbb{V}\rt x_{j+1}$ in $\Gamma^s_{\CH}$. Likewise,  one obtains another walk $x_{j-1} \leftarrow \mathbb{W} \leftarrow z$ in $\Gamma^s_{\CH}$. Together, these give rise to the walk $x_{j-1}\leftarrow \mathbb{W} \leftarrow z \rt \mathbb{V}\rt x_{j+1}$ in $\Gamma^s_{\CH}$.
\end{proof}

Now we are ready to state the main theorem of this section.

\begin{theorem}\label{Theorem 6.9}
Assume $\La$ is an indecomposable self-injective algebra. Then
	\begin{itemize}		
				\item [$(1)$] If $\mathcal{H}$ is of finite representation type, then $\Gamma^s_{\mathcal{H}}\simeq \mathbb{Z}\Delta/G$, where $\Delta$ is a Dynkin quiver and $G$ is an automorphism group of $\mathbb{Z}\Delta$.
		\item [$(2)$] If $\mathcal{H}$ is of infinite representation type, then every component of $\Gamma^s_{\mathcal{H}}$ containing an object of the form $(0\rt C)$  with $C$ indecomposable non-projective,  is a stable tube.
	\end{itemize}
\end{theorem}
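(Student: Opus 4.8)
The plan is to identify $\Gamma^s_{\mathcal{H}}$ — or, in case (2), a connected component $\mathcal{C}$ of it containing some $(0\rt C)$ with $C$ indecomposable non-projective — as a connected stable valued translation quiver carrying a $\tau_{\mathcal{H}}$-periodic vertex, and then to invoke the structure theorem of Liu \cite{L}: a connected stable translation quiver that possesses a $\tau$-periodic vertex is either a stable tube $\mathbb{Z}A_\infty/\langle\tau^n\rangle$ or is isomorphic to $\mathbb{Z}\Delta/G$ for a Dynkin quiver $\Delta$ and an admissible automorphism group $G$ of $\mathbb{Z}\Delta$. Lemma \ref{Lemma 6.5} is exactly what is needed to place ourselves in this situation: it says that $\Gamma^s_{\mathcal{H}}$ (resp. each of its components) is connected and stable; and since $\mathcal{H}$ of finite type forces $\mmod\Lambda$ of finite type, Proposition \ref{Prop 5.1} supplies an ample stock of $\tau_{\mathcal{H}}$-periodic objects.

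For (1): by Lemma \ref{Lemma 6.5}, $\Gamma^s_{\mathcal{H}}$ is a finite connected stable translation quiver, hence every one of its vertices is $\tau_{\mathcal{H}}$-periodic. Picking any such vertex and applying Liu's theorem, the stable-tube alternative is discarded because a stable tube is infinite; thus $\Gamma^s_{\mathcal{H}}\simeq\mathbb{Z}\Delta/G$ with $\Delta$ Dynkin. Alternatively one may write $\Gamma^s_{\mathcal{H}}=\mathbb{Z}T/G$ by Riedtmann's structure theorem and then apply the Happel--Preiser--Ringel / Vinberg criterion to the subadditive function on $\Gamma^s_{\mathcal{H}}$ given by composition length (regarding objects of $\mathcal{H}$ as $T_2(\Lambda)$-modules), concluding directly that the tree class of a finite stable translation quiver must be Dynkin.

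For (2): fix a component $\mathcal{C}\ni(0\rt C)$ as above; by Lemma \ref{Lemma 6.5}(a) it is a connected stable translation quiver, and by Proposition \ref{Prop 5.1}(1) the vertex $(0\rt C)$ is $\tau_{\mathcal{H}}$-periodic, its whole $\tau_{\mathcal{H}}$-orbit — consisting (as computed in the proof of Proposition \ref{Prop 5.1}) of objects of the shapes $(\tau C\st{1}\rt\tau C)$, $(\tau^2 C\rt 0)$, $(\nu P_1\rt\nu P_0)$ — lying in $\Gamma^s_{\mathcal{H}}$ precisely because $\Lambda$ is self-injective, so that none of these objects is projective or injective in $\mathcal{H}$. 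Liu's theorem then gives that $\mathcal{C}$ is a stable tube or $\mathcal{C}\simeq\mathbb{Z}\Delta/G$ with $\Delta$ Dynkin, and it remains to exclude the latter, that is, to show that $\mathcal{C}$ is infinite. For this I would use the concrete description of the almost split sequences around the orbit of $(0\rt C)$: by Lemma \ref{Lemma 5.2} the almost split sequence in $\mathcal{H}$ ending at $(0\rt C)$ has as middle term the single indecomposable object $(\tau C\rt B)_f$, where $0\rt\tau C\rt B\rt C\rt 0$ is the almost split sequence in $\mmod\Lambda$ ending at $C$, so $(0\rt C)$ sits at a ``mouth'' of $\mathcal{C}$; iterating and controlling the successive middle terms by means of Propositions \ref{Prop 4.6} and \ref{Proposition 4.7} and their $\La^{\op}$-duals — each such middle term being an indecomposable non-projective non-injective object together with projective/injective summands that are erased in $\Gamma^s_{\mathcal{H}}$ — one sees that the composition lengths of the indecomposables occurring in $\mathcal{C}$ are unbounded, so $\mathcal{C}$ is infinite. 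An infinite connected stable translation quiver with a $\tau_{\mathcal{H}}$-periodic vertex is, by Liu's theorem, a stable tube; hence $\mathcal{C}\simeq\mathbb{Z}A_\infty/\langle\tau^n\rangle$.

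Part (1) is essentially bookkeeping once Lemma \ref{Lemma 6.5} and Liu's theorem are in hand. The real obstacle lies in (2): proving that the component $\mathcal{C}$ is genuinely infinite — equivalently, ruling out the finite quotients $\mathbb{Z}\Delta/G$ for $\mathcal{C}$ — cannot be deduced from ``$\mathcal{H}$ of infinite type'' in the abstract, since $\Gamma^s_{\mathcal{H}}$ may a priori break into several components only some of which are infinite, so one is forced to read it off from the internal structure of the almost split sequences attached to the $\tau_{\mathcal{H}}$-orbit of $(0\rt C)$, i.e. from the explicit computations of Sections \ref{Section 4}--\ref{Subsection 4.2}. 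A subsidiary but necessary verification, underlying everything, is that after deleting the finitely many projective/injective vertices $\Gamma^s_{\mathcal{H}}$ is still a bona fide valued stable translation quiver to which the theorems of Riedtmann, Happel--Preiser--Ringel and Liu apply; this is the content of Lemma \ref{Lemma 6.5}(a).
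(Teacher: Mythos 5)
Your argument follows the paper's proof exactly: Lemma \ref{Lemma 6.5} makes $\Gamma^s_{\mathcal{H}}$ (resp.\ the relevant component) a connected, stable valued translation quiver, Proposition \ref{Prop 5.1} supplies the $\tau_{\mathcal{H}}$-periodic vertex $(0\rt C)$, and Liu's structure theorem \cite{L} then yields both conclusions. The only divergence is in part $(2)$, where you attempt to rule out the finite quotient $\mathbb{Z}\Delta/G$ by hand via an (only sketched) unboundedness-of-composition-lengths argument, whereas the paper delegates the finite/infinite dichotomy entirely to the cited Theorem 5.5 of \cite{L}; this extra care is sensible but does not change the route.
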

\begin{proof}
According to Lemma \ref{Lemma 6.5},   $\Gamma^s_{\mathcal{H}}$ is  finite,  connected, and stable. Moreover,  any  object $(0\rt C)$, where $C$ is an indecomposable non-projective $\Lambda$-module, is $\tau_{\mathcal{H}}$-periodic by Proposition \ref{Prop 5.1}. Now both of the assertions follow from \cite[Theorem 5.5]{L}.
\end{proof}

\begin{remark}\label{RGR} The theorem above says that for $\Lambda$ indecomposable and self-injective, $\Gamma^s_{\mathcal{H}}\simeq \mathbb{Z}\Delta/G$, where $\Delta$ is a Dynkin quiver and $G$ is an automorphism group of $\mathbb{Z}\Delta$ provided that $T_2(\Lambda)$ is of finite representation type. Note that $\Lambda$ being self-injective implies that $T_2(\La)$ is 1-Gorenstein in the sense that injective dimension of the regular module is at most one. On the other hand, there are known classifications of the representation-finite hereditary as well as representation-finite self-injective algebras, mainly due respectively to Gabriel \cite{G} and Riedtmann \cite{R1,R2}, in terms of Dynkin diagrams; see also \cite{BLR}. As the class of 1-Gorenstein algebras clearly contains the class of hereditary as well as self-injective algebras, Theorem \ref{Theorem 6.9} suggests that one may expect to discover  a link to Dynkin diagrams for general representation-finite 1-Gorenstein algebras. It sounds more interesting when one recalls that there are several important classes of 1-Gorenstein algebras including the cluster-tilted algebras, 2-Calabi-Yao tilted algebras, or more generally, the endomorphism algebras of cluster tilting objects in triangulated categories, and also the class of 1-Gorenstein algebras defined by Geiss, Leclerc and Schr\"{o}er via quivers with relations associated with symmetrizable Cartan matrices \cite{GLS}.
\end{remark}

\vspace{.1 cm}

We illustrate with the following examples.

\example Let $k$ be a field and consider the $k$-algebra $\La=k[x]/(x^2)$. Put $S=k[x]/(x)$ be the simple $\La$-module. Let $p:\La\rt S$ be the canonical epimorphism,  $i:S\rt \La$ be the monomorphism defined by multiplication by $x$, and $f=ip$.  The Auslander-Reiten quiver $\Gamma_{\mathcal{H}}$ has been computed on page $227$ of \cite{AuslanreitenSmalo}.   It follows that $\Gamma^s_{\mathcal{H}}$ has the following shape.
 \[
\xymatrix  @R=0.3cm  @C=0.6cm {
	&&[S0]\ar[dr]\ar@{.}[rr]&	&\ar@{.}[rr][SS_1]\ar[dr]&&[0S]\ar[dr]&&&\\&&\ar@{.}[r]&
	[\La S_p]\ar[dr]\ar@{.}[rr]\ar[ru]&&[S\La_i]\ar[dr]\ar[ru]\ar@{.}[rr]&&[\La S_p]&&\\
	&&[0S]\ar[ur]&	&[\La\La_f]\ar[ru]&&[S0]\ar[ur]&&&}
\]
This shows that $\Gamma^s_{\mathcal{H}}=\mathbb{Z}\Delta/G$, where $\Delta$ is the quiver  $\bullet\rt \bullet \leftarrow \bullet$ and $G$ is generated by $\rho \tau^2_{\CH}$, where $\rho$ is the autoequivalence that permutes $[S0]$ and $[0S]$ and keeps $[\La S_p]$.

\vspace{.2 cm}

\begin{example}
 Let  $k$ be a field and $\La=k[x]/(x^n)$ for $n\geq 1$.
The Auslander-Reiten quiver of $\La$ looks like
 \[\xymatrix{
 	 k\ar@<2pt>[r]&k[x]/(x^2)\ar@<2pt>[r]\ar@<2pt>[l]&k[x]/(x^3)\ar@<2pt>[r]\ar@<2pt>[l]&\cdots\ar@<2pt>[r]\ar@<2pt>[l]&k[x]/(x^{n-1})\ar@<2pt>[r]\ar@<2pt>[l]&k[x]/(x^n)
 \ar@<2pt>[l]
 }\]
It follows that $\tau$ acts identically on indecomposable non-projective modules. In addition, as $\nu(\Lambda)\simeq \Lambda$ and $\nu$ induces an automorphism on the Auslander-Reiten quiver of $\La$, one infers that $\nu$ acts also identically over indecomposable objects as well. In view of the proof of Proposition \ref{Prop 5.1}, this yields that for $1\leq i\leq n-1$, the objects
 $$\left(\begin{smallmatrix}
0\\k[x]/(x^i)
\end{smallmatrix}\right)_0, \left(\begin{smallmatrix}
k[x]/(x^i)\\k[x]/(x^i)
\end{smallmatrix}\right)_1, \left(\begin{smallmatrix}
k[x]/(x^i)\\ 0
\end{smallmatrix}\right)_0 \ \ \text{and} \ \ \left(\begin{smallmatrix}
k[x]/(x^n)\\k[x]/(x^n)
\end{smallmatrix}\right)_h, $$
are all of $\tau_{\mathcal{H}}$-periodicity $4$; here $h$ is the composition of the  surjection  $k[x]/(x^n)\rt k[x]/(x^i)$ defined by $1+(x^n)\mapsto 1+(x^i) $ and the injection  $k[x]/(x^i)\rt k[x]/(x^n)$ given by $1+(x^i)\mapsto x^{n-i}+(x^n)$.
\end{example}

\begin{remark} Motivated by Proposition \ref{Prop 5.1}, for a self-injective Artin algebra $\La$, we consider  $A=\tau\nu\tau^2$  as an autoequivalence of the stable category $\underline{\text{mod}}\mbox{-}\La$ of $\Lambda$.
A fundamental property of the Nakayama functor $\nu$ is that it commutes with any auto-equivalence of $\underline{\text{mod}}\mbox{-}\La$; see e.g. \cite[Page 15]{DI}. Hence $A\simeq\nu\tau^3\simeq\tau^3\nu$.
Similarly, we are given an equivalence $B=\tau\Omega^{-1}\tau$ on $\underline{\text{mod}}\mbox{-}\La$. Since $\Omega$ defines an autoequivalence on the stable category, one has $B\simeq\Omega^{-1}\tau^2\simeq \tau^{2}\Omega^{-1}$ at the level of indecomposable non-projective modules. In the meanwhile, by \cite [\S VI, Theorem 8.5]{SY}, we know that $\tau\simeq \nu\Omega^2\simeq \Omega^2\nu$. This implies that, at the level of indecomposable non-projective modules, $A\simeq \nu^4\Omega^6\simeq \Omega^6\nu^4$ whereas $B\simeq \Omega^3\nu\simeq \nu\Omega^3$.
In particular, if $\Lambda$ is symmetric, then $A\simeq \Omega^6$ and $B\simeq  \Omega^3$.
\end{remark}

We conclude the paper by providing an application of the results in this section. Assume $\La$ is self-injective and $M$ is an indecomposable non-projective $\Lambda$-module. Denote by $[M]_A$ (resp. $[M]_B$) the $A$-orbit (resp. the $B$-orbit) of $M$.   Let also $\Gamma_{\CH}(M)$ (resp. $\Gamma'_{\CH}(M)$) be the unique component of the  Auslander-Reiten quiver $\Gamma_{\CH}$ containing the vertex $\left(\begin{smallmatrix}0\\ M \end{smallmatrix}\right)_0$ (resp. $\left(\begin{smallmatrix}P\\ M \end{smallmatrix}\right)_p$ where $p:P\rt M$ is the projective cover). Moreover, let
$$\mathcal{T}=\{\Gamma_{\CH}(M)\mid M \in \text{ind}\mbox{-}\La\},\ \ \mathcal{T}'=\{\Gamma'_{\CH}(M)\mid M \in \text{ind}\mbox{-}\La\}$$
%$$
%$$
$$\mathcal{E}=\{[M]_{A}\mid M \in \text{ind}\mbox{-}\La \} \ \text{and} \ \ \mathcal{E}'=\{[M]_{B}\mid M \in \text{ind}\mbox{-}\La \}.$$
Define the map $\delta:\mathcal{E}\rt \mathcal{T}$ (resp. $\beta:\mathcal{E}'\rt \mathcal{T}'$) by sending $[M]_{A}$ to $\Gamma_{\CH}(M)$ (resp. $[M]_{B}$ to $\Gamma'_{\CH}(M)$). These maps are well-defined. Indeed, if $M, M'$ belong to the same $A$-orbit, then there is an integer $m$ such that $A^m(M)=M'$. Proposition \ref{Prop 5.1} then implies that $\tau^{4m}_{\CH}\left(\begin{smallmatrix}0\\ M \end{smallmatrix}\right)_0=\left(\begin{smallmatrix}0\\ A^mM \end{smallmatrix}\right)_0=\left(\begin{smallmatrix}0\\ M' \end{smallmatrix}\right)_0$, that is to say,  $\left(\begin{smallmatrix}0\\ M \end{smallmatrix}\right)_0$ and $\left(\begin{smallmatrix}0\\ M' \end{smallmatrix}\right)_0$ lie in the same $\tau_{\CH}$-orbit, thus also in the same component of $\Gamma_{\CH}$. A similar argument also shows the well-definition of $\beta$.

\vspace{.1 cm}

Further, we denote by $\mathcal{T}_{\infty}$ (resp. $\mathcal{T}_{\infty}'$) the subset of $\mathcal{T}$ (resp. $\mathcal{T}'$) consisting of all infinite components, and by $\mathcal{E}_{\infty}$ (resp. $\mathcal{E}_{\infty}'$) the inverse image of  $\mathcal{T}_{\infty}$ (resp. $\mathcal{T}_{\infty}'$)  under the map $\delta$ (resp. $\beta$). We recall that  the set of vertices of a stable tube  having exactly one immediate predecessor (or, equivalently, exactly one immediate successor) is called the {\it mouth} of the tube.

\begin{proposition}\label{proposition 3.7}
Suppose $\Lambda$ is an indecomposable self-injective Artin algebra. Under the above notation, the following statements hold.
\begin{itemize}
\item [$(1)$] The maps $\delta$ and $\beta$ are surjective.
\item [$(2)$]  The restricted  maps  $\delta\mid:\mathcal{E}_{\infty}\rt \mathcal{T}_{\infty}$ and  $\beta\mid:\mathcal{E}'_{\infty}\rt \mathcal{T}'_{\infty}$ are  bijections.
\end{itemize}
	\end{proposition}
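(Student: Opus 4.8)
The plan is as follows. Part $(1)$ is immediate: by definition $\mathcal{T}=\{\Gamma_{\CH}(M)\mid M\in\mathrm{ind}\mbox{-}\Lambda\}$ and $\delta([M]_A)=\Gamma_{\CH}(M)$, so $\delta$ is onto, and likewise $\beta$. Since $\mathcal{E}_\infty=\delta^{-1}(\mathcal{T}_\infty)$ the restriction $\delta\mid$ is still onto $\mathcal{T}_\infty$, and similarly $\beta\mid$; hence the whole content of $(2)$ is the \emph{injectivity} of the two restricted maps. I will reduce this to two assertions about an infinite component $\mathcal{C}\in\mathcal{T}_\infty$ (resp. $\mathcal{C}'\in\mathcal{T}'_\infty$): $(\mathrm{I})$ the component, after deleting its projective and injective vertices, is a stable tube; and $(\mathrm{II})$ inside that tube the distinguished objects — those of the form $(0\rt N)$ in the $\delta$-case, and those of the form $(P\st{p}\rt N)$ with $p$ a projective cover in the $\beta$-case — all sit on the mouth.

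For $(\mathrm{I})$ in the $\delta$-case: an infinite $\Gamma_{\CH}(M)$ forces $\CH$ to be of infinite representation type, so by Theorem \ref{Theorem 6.9}$(2)$ the component of $\Gamma^s_{\CH}$ containing $(0\rt M)$ is a stable tube; moreover, repairing walks exactly as in the proof of Lemma \ref{Lemma 6.5}$(b)$ — an argument that uses only the middle-term descriptions of Propositions \ref{Prop 4.8}, \ref{Prop 4.6}, \ref{Proposition 4.7}, \ref{Prop 4.5}, and not finite representation type — shows this tube to be precisely $\Gamma_{\CH}(M)$ with its finitely many projective and injective vertices removed. For the $\beta$-case one first notes that $(0\rt M)$ is then $\tau_{\CH}$-periodic (every vertex of a stable tube is), whence $M$ is $A$-periodic by the orbit computation in the proof of Proposition \ref{Prop 5.1}$(1)$; since $\tau\simeq\nu\Omega^2$ on the stable category one has $A\simeq B^{2}$ there, so $M$ is also $B$-periodic, and then the orbit computation in the proof of Proposition \ref{Prop 5.1}$(3)$ makes $(P\st{p}\rt M)$ $\tau_{\CH}$-periodic. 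Its component in $\Gamma^s_{\CH}$ is therefore an infinite, connected, stable translation quiver carrying a periodic vertex, hence a stable tube by the structure theory behind \cite{L}.

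For $(\mathrm{II})$ in the $\delta$-case: by Lemma \ref{Lemma 5.2}$(1)$ the almost split sequence in $\CH$ ending at $(0\rt N)$ has middle term $(\tau N\st{f}\rt B)$, where $0\rt\tau N\st{f}\rt B\rt N\rt 0$ is the almost split sequence in $\mmod\Lambda$; here $f$ is left minimal, so a direct summand $(0\rt Y)$ would force $Y=0$, and as $\tau N$ is indecomposable the object $(\tau N\st{f}\rt B)$ is indecomposable. Thus $(0\rt N)$ has a unique immediate predecessor in $\Gamma_{\CH}$, hence at most one inside its tube, hence lies on the mouth. In the $\beta$-case, Proposition \ref{Prop 4.4} exhibits the almost split sequence ending at $(P\st{p}\rt N)$ with middle term $(Z\st{dh}\rt X)$ assembled from a pull-back and a push-out of $0\rt\tau N\rt B\rt N\rt 0$, and the same conclusion follows once this middle term is known to be indecomposable. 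Establishing that indecomposability — equivalently, that $(P\st{p}\rt N)$ has a unique immediate predecessor — is the technical heart of the proof and, I expect, the main obstacle; it should be attacked either by sharpening Propositions \ref{Prop 4.8}, \ref{prop 4.9}, \ref{PropositionThemiddle 4.5}, or by analysing the pull-back/push-out construction directly (noting, for instance, that $\mathrm{Coker}(dh)\cong\Omega^{-1}(\tau N)$ and $\Ker(dh)\cong\Omega N$ are both indecomposable).

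Granting $(\mathrm{I})$ and $(\mathrm{II})$, injectivity is short. Suppose $\delta([M]_A)=\delta([M']_A)\in\mathcal{T}_\infty$. Then $(0\rt M)$ and $(0\rt M')$ lie in the same component of $\Gamma_{\CH}$, hence, after deleting projective/injective vertices via the repair argument of $(\mathrm{I})$, in the same stable tube, and by $(\mathrm{II})$ both lie on its mouth. The mouth of a stable tube is a single $\tau_{\CH}$-orbit, so $(0\rt M')\cong\tau_{\CH}^{\,j}(0\rt M)$ for some $j$; by the orbit computation in the proof of Proposition \ref{Prop 5.1}$(1)$ the only objects of the form $(0\rt ?)$ among the $\tau_{\CH}^{\,i}(0\rt M)$ are the $(0\rt A^mM)$, so $M'\cong A^mM$ and $[M]_A=[M']_A$. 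The argument for $\beta$ is verbatim the same, with $(0\rt N)$ replaced by $(P\st{p}\rt N)$ and $A$ by $B$, using the orbit computation in the proof of Proposition \ref{Prop 5.1}$(3)$.
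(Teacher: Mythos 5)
Your overall strategy — reduce injectivity to the statements that the relevant component is a stable tube, that the distinguished objects sit on its mouth, and that the mouth is a single $\tau_{\CH}$-orbit whose only objects of the distinguished shape are the $(0\rt A^mM)$ (resp.\ the projective covers $(Q^m\rt B^mM)$) — is exactly the paper's argument, which is stated in three sentences and leaves essentially all of these points to the reader. Your treatment of $\delta$ is in fact more complete than the paper's: the paper merely asserts that ``the $\tau_{\CH}$-orbit of $(0\rt M)$ generates all the vertices in the mouth,'' whereas you justify it by observing that the middle term of the almost split sequence of Lemma \ref{Lemma 5.2}$(1)$ is indecomposable and neither projective nor injective, so $(0\rt M)$ has a unique immediate predecessor in $\Gamma^s_{\CH}$. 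Your reduction of the $B$-periodicity needed for the $\beta$-case to $A\simeq B^2$ on the stable category is also correct and consistent with the paper's later remark.

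The genuine gap is the one you flag yourself: assertion $(\mathrm{II})$ for $\beta$, i.e.\ that $(P\st{p}\rt N)$ lies on the mouth of its tube. This is not a cosmetic omission — without it, two projective-cover objects could a priori sit on different $\tau_{\CH}$-orbits of the same tube, and the injectivity of $\beta\mid$ would fail to follow. What has to be shown is that the middle term $\mathbb{B}$ of the almost split sequence of Proposition \ref{Prop 4.4} has \emph{exactly one} indecomposable direct summand that is neither projective nor injective in $\CH$; Propositions \ref{Prop 4.8} and \ref{prop 4.9} only give that there is at least one such summand and that summands of the form $(P'\st{1}\rt P')$ genuinely can occur, so they do not settle the question, and your suggested route through $\mathrm{Coker}(dh)$ and $\Ker(dh)$ is left as a plan rather than an argument. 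You should be aware, however, that the paper's own proof is no better here: it treats only $\Gamma_{\CH}(M)$ and $[M]_A$ and says nothing about $\beta$, nor does Theorem \ref{Theorem 6.9}$(2)$ cover components containing $(P\st{p}\rt C)$ rather than $(0\rt C)$. So you have correctly isolated the one step that neither you nor the paper actually proves; until it is supplied, the $\beta$-half of part $(2)$ remains open in your write-up.
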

\begin{proof}
The first assertion is clear from the definitions.	For the second one, assume that the component $\Gamma_{\CH}(M)$ is infinite for an indecomposable  non-projective $\Lambda$-module $M$. Hence by Proposition \ref{Theorem 6.9} $\Gamma_{\CH}(M)$ is a stable tube and the $\tau_{\CH}$-orbit of $\left(\begin{smallmatrix}0\\ M \end{smallmatrix}\right)_0$ generates all the vertices in the mouth of this tube.  Taking into account that the mouth of any stable tube is unique, it follows that $\Gamma_{\CH}(M)$ is uniquely determined by $[M]_{A}$. This gives the required result.
\end{proof}

\begin{remark}
We refer to a recent work \cite{HZ} of the first-named author for an observation similar to Proposition \ref{proposition 3.7} in the framework of the monomorphism category $\mathcal{S}$.
\end{remark}

\end{document}